\documentclass[a4paper,10pt]{article}
\usepackage{amsmath, amsthm, amssymb}
\theoremstyle{definition}
\newtheorem{defn}{Definition}[section]

\newtheorem{rem}[defn]{Remark}
\theoremstyle{plain}

\newtheorem{propn}[defn]{Proposition}
\newtheorem{lemma}[defn]{Lemma}
\newtheorem{thm}[defn]{Theorem}
\newtheorem{cor}[defn]{Corollary}
\newtheorem{question}[defn]{Question}
\usepackage{tikz}
\usetikzlibrary{shapes.geometric}
\usepackage{algorithm}
\usepackage{algpseudocode}
\usepackage{graphicx}
\usepackage{hyperref}
\usepackage{enumerate}
\usepackage[paper=a4paper]{geometry}
\usepackage{lscape}

\pagestyle{headings}

\title{Subsemigroup, ideal and congruence growth of free semigroups}
\author{Alex Bailey\footnote{Mathematical Sciences, University of Southampton, Highfield, Southampton, United Kingdom, SO17 1BJ} \and Martin Finn-Sell\footnote{Mathematische Institut, Georg-August-Universit\"{a}t G\"{o}ttingen, Bunsenstrasse 3-5, 37073, G\"{o}ttingen, Deutschland.} \and Robert Snocken\footnotemark[1]}
\date{Sep 2014}

\begin{document}
\maketitle

\begin{abstract}
Using Rees index, the subsemigroup growth of free semigroups is investigated. Lower and upper bounds for the sequence are given and it is shown to have superexponential growth of strict type $n^n$ for finite free rank greater than $1$. It is also shown that free semigroups have the fastest subsemigroup growth of all finitely generated semigroups. Ideal growth is shown to be exponential with strict type $2^n$ and congruence growth is shown to be at least exponential. In addition we consider the case when the index is fixed and rank increasing, proving that for subsemigroups and ideals this sequence fits a polynomial of degree the index, whereas for congruences this fits an exponential equation of base the index. We use these results to describe an algorithm for computing values of these sequences and give a table of results for low rank and index.
\end{abstract}

\section{Introduction}

The concept of word growth of a finitely generated group has been a central research topic connecting differential geometry, geometric and combinatorial group theory for the past 50 years\footnote{For a history, see the introduction to \cite{Grigorchuk.2013}.} Given a finitely generated group, take the sequence that counts the number of elements of the group of length at most $n$ (with respect to some finite generating set). In 1981, answering a question of Milnor, Gromov proved that a finitely generated group is virtually nilpotent if and only if this sequence has polynomial growth \cite{MR623534}. This powerful result indicated the strong connections between a groups algebraic properties and its asymptotic behaviour.

This result inspired the definition of subgroup growth for a group, see for instance \cite{MR1978431}. Given a finitely generated group, take the sequence that counts the number of index $n$ subgroups of the group. In 1993 Lubotzky, Mann and Segal proved that a finitely generated residually finite group is virtually solvable of finite rank if and only if this sequence has polynomial growth \cite{MR1239055}. This area has now been extended to many other mathematical objects, e.g. representation growth and subring growth \cite{MR2807857}.

In this paper we consider the situation for semigroups. In order to define subsemigroup growth of a semigroup we first need to define index. At first, it is not clear what the correct definition should be, and there have been many different attempts depending on the types of semigroups considered. Ideally you want the notion of index to generalise group index, but perhaps more imporantly, you want finite index subsemigroups to preserve important properties of the semigroup. For example, Grigorchuk gave a definition of index for a subsemigroup which generalised group index and impressively extended Gromov's polynomial growth theorem to cancellative semigroups. However, his definition of index does not preseve even the simplest property of finite generation \cite{MR941053}.

In this paper we choose Rees index, which is defined simply to be the cardinality of the complement. This clearly does not generalise group index, but finite Rees index subsemigroups do preserve a very large number of important properties (e.g. being finitely generated/presented, residually finite, solvable word problem, automatic etc.) See the survey article \cite{arxiv-1307.8259} for more details. The very nature of Rees index makes this an inherently combinatorial problem. For example this paper includes results relating to: binomial coefficients, Stirling numbers (of the first and second kind), Bell numbers, Catalan numbers (and the generalised `Fuss-Catalan' numbers) and Fibonacci numbers.

This work, and the techniques involved were inspired by the rank one situation: finite Rees index subsemigroups of the free monogenic semigroup are well studied in the literature under the name of numerical semigroups \cite{MR2564064,MR2498791,MR2601762,MR2608114,MR2863434,MR3053785}. Increasing interest is being shown in numerical semigroups, with applications arising in commutative algebra and algebraic geometry \cite{MR1357822}. It has recently been proved that the numerical semigroups of genus $n$, that is, the Rees index $n$ subsemigroups of the free monogenic semigroup, have Fibonacci-like growth \cite{MR3053785}, answering in the positive a conjecture of Bras-Amor{\'o}s \cite{MR2377597}. 

Recall the following results from the subgroup growth literature (where group index is used). Every free group $F_r$ with free rank $r \ge 2$ has:
\begin{enumerate}
\item subgroup growth of strict type $n^n$ \cite[Cor 2.2]{MR1978431},
\item subnormal subgroup growth of strict type $2^n=n^{n/log(n)}$ \cite[Cor 2.4]{MR1978431},
\item normal subgroup growth of strict type $n^{log(n)}$ \cite[Cor 2.8]{MR1978431}.
\end{enumerate}

Firstly we recall the definitions from the numerical semigroup literature that we adapt to the higher free rank case, then we prove some basic results about generating sets and how to constuct semigroups of both higher and lower index from a given subsemigroup.

Using a generalisation of the subsemigroup tree of \cite{MR2564064} we then illustrate upper and lower bounds for the number of (Rees) index $n$ subsemigroups of the free semigroups $FS_r$ and conclude that, similar to groups, $FS_r$ has subsemigroup growth of strict type $n^n$ for $r \ge 2$. In Section \ref{sect:poly} we consider the situation when the index is fixed and the free rank varies, in which case we show it fits an exact polynomial of degree the index.

We then consider the question of counting just the ideals, after giving upper and lower bounds, we show that free semigroups of rank greater than $1$ have exponential ideal growth of strict type $2^n=n^{n/log(n)}$, and for fixed index they also satisfy a polynomial of degree the index.

Finally congruence growth is considered, where we count the number of congruences with $n$ classes, and we show this to have at least exponential growth. We conjecture that it is in fact exponential. This is analagous to counting normal subgroups of free groups which have intermediate growth, so in some sense free semigroups have `more' quotients than free groups. We also consider the sequence for a fixed number of classes $n$ as the free rank $r$ increases and show this satisfies an exponential equation of base $n$.

Part of this project was computational. We use some of our results to describe algorithms which were implemented to calculate values of the sequences for low rank and index. The code is available online \cite{code} and tables of the results are presented at the end of the paper as appendices.

\section{Preliminaries}

Let $S$ be a semigroup, the \textbf{Rees index} of a subsemigroup  $T$ of $S$ is defined to be $|S \setminus T|$, and we say $T$ has finite index in $S$ if $|S\setminus T|<\infty$.

Let $X_r=\{g_1,\ldots,g_r\}$ be a finite set of symbols, and $FS_r=X_r^+$ denote the free semigroup of rank $r$ on the set $X_{r}$, that is, all non-empty words over the alphabet $X_r$.

Let $\Lambda \subseteq FS_r$ be a finite (Rees) index subsemigroup of $FS_r$. Following terminology from numerical semigroups, we call $G(\Lambda)=FS_r \setminus \Lambda$ the set of {\bf gaps} of $\Lambda$, and we remark that the index of $\Lambda$ is equal to $|G(\Lambda)|$ (this is usually called the genus). We use $|w|$ to denote the length of any word $w \in FS_r$. We now define a total order on $FS_r$, usually called the \textbf{shortlex order}, where we order first by word length, and then lexicographically. Using the shortlex order we call $f(\Lambda)=\max\{w \mid w \in G(\Lambda)\}$ the {\bf Frobenius} of $\Lambda$, and $m(\Lambda)=\min\{w \mid w \in \Lambda\}$ the {\bf multiplicity} of $\Lambda$.

Given any word $w \in FS_r$ and any $1 \le i \le |w|-1$, let $w_{\text{pre}(i)}$ denote the \textbf{prefix} of $w$ of length $i$, and $w_{\text{suf}(i)}$ denote the \textbf{suffix} of $w$ of length $i$.

\begin{lemma} \label{frob-bound}
For any index $n$ subsemigroup $\Lambda$ of $FS_r$, $|f(\Lambda)| \le 2n-1$.
\end{lemma}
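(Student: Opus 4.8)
The plan is to take $w=f(\Lambda)$, set $\ell=|w|$, and bound $\ell$ by exploiting that $\Lambda$ is closed under concatenation. The key observation is that every nontrivial way of splitting $w$ must expose a gap: for each $1\le i\le \ell-1$ write $w=w_{\text{pre}(i)}\,w_{\text{suf}(\ell-i)}$. If both factors lay in $\Lambda$, then so would their product $w$, contradicting $w\in G(\Lambda)$. Hence for every such $i$ at least one of the prefix $w_{\text{pre}(i)}$ and the suffix $w_{\text{suf}(\ell-i)}$ is itself a gap.

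First I would record this as a covering statement. Let $A=\{\,i : w_{\text{pre}(i)}\in G(\Lambda)\,\}$ and $B=\{\,i : w_{\text{suf}(\ell-i)}\in G(\Lambda)\,\}$, both contained in $\{1,\dots,\ell-1\}$. The previous paragraph gives $A\cup B=\{1,\dots,\ell-1\}$, so $|A|+|B|\ge\ell-1$.

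Next I would bound $|A|$ and $|B|$ separately using $|G(\Lambda)|=n$. The prefixes $w_{\text{pre}(1)},\dots,w_{\text{pre}(\ell-1)}$ are pairwise distinct (distinct lengths) and each is strictly shorter than $w$, hence distinct from $w$. Thus the $|A|$ prefix-gaps together with $w$ itself form $|A|+1$ distinct elements of $G(\Lambda)$, forcing $|A|\le n-1$. The identical argument applied to the suffixes gives $|B|\le n-1$. Combining, $\ell-1\le|A|+|B|\le 2n-2$, i.e.\ $|f(\Lambda)|=\ell\le 2n-1$.

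The only point requiring care is the counting. One is tempted to merge prefix-gaps and suffix-gaps into a single family and conclude the stronger $\ell\le n$, but a prefix and a suffix of $w$ of equal length can coincide (precisely when $w$ has a border), so the two families may overlap heavily. Bounding $|A|$ and $|B|$ separately sidesteps this entirely and is exactly what produces the factor $2$; the small refinement of excluding $w$ itself from each family is what sharpens $2n$ to $2n-1$.
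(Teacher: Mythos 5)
Your proof is correct and rests on the same key observation as the paper's: for each $1 \le i \le |f(\Lambda)|-1$, at least one of $f_{\text{pre}(i)}$, $f_{\text{suf}(|f(\Lambda)|-i)}$ must lie in $G(\Lambda)$, since otherwise $f(\Lambda)$ would be a product of two elements of $\Lambda$. The paper organizes the count as a contradiction instead — assuming $|f(\Lambda)| \ge 2n$ and using only the splits $i=1,\dots,n$, where the length constraints make the $n$ chosen gaps pairwise distinct and distinct from $f(\Lambda)$, yielding $n+1$ gaps — which is the same counting your separate bounds $|A| \le n-1$ and $|B| \le n-1$ accomplish directly, so this is essentially the paper's argument in a mildly different (and equally valid) bookkeeping.
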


\begin{proof}
Let $\Lambda \subseteq FS_r$ with $|G(\Lambda)|=n$, and assume $|f(\Lambda)|=k \ge 2n$. For every $1 \le i \le n$, either $f_{\text{pre}(i)} \in G(\Lambda)$ or $f_{\text{suf}(k-i)} \in G(\Lambda)$ as otherwise $f \notin G(\Lambda)$. Therefore, along with $f \in G(\Lambda)$, there must be at least $n+1$ distinct words in $G(\Lambda)$ which is a contradiction.
\end{proof}

This immediately gives the following result:

\begin{cor} \label{finitely-many-index-n}
There are only finitely many index $n$ subsemigroups of $FS_r$.
\end{cor}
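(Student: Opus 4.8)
The plan is to deduce Corollary~\ref{finitely-many-index-n} directly from the Frobenius bound in Lemma~\ref{frob-bound}. The key observation is that an index $n$ subsemigroup $\Lambda$ of $FS_r$ is completely determined by its set of gaps $G(\Lambda) = FS_r \setminus \Lambda$, so it suffices to bound the number of possible gap sets of cardinality $n$.

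First I would note that, by Lemma~\ref{frob-bound}, every gap $w \in G(\Lambda)$ satisfies $|w| \le |f(\Lambda)| \le 2n-1$, since $f(\Lambda)$ is by definition the shortlex-maximal gap and all gaps are shortlex-bounded by it. Hence every gap lies in the finite set
\begin{equation}
W_n = \{ w \in FS_r \mid |w| \le 2n-1 \},
\end{equation}
whose cardinality is $\sum_{k=1}^{2n-1} r^k$, a finite number depending only on $r$ and $n$. Therefore $G(\Lambda)$ is an $n$-element subset of the finite set $W_n$, and there are at most $\binom{|W_n|}{n}$ such subsets.

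Finally I would conclude that the number of index $n$ subsemigroups of $FS_r$ is at most $\binom{|W_n|}{n}$, which is finite; this proves the corollary. (One may note that not every $n$-subset of $W_n$ need arise as a gap set of a genuine subsemigroup, but for a finiteness statement an upper bound is all that is required, so this subtlety does not affect the argument.)

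I do not anticipate a substantive obstacle here: the entire content is carried by Lemma~\ref{frob-bound}, and the remaining step is the elementary observation that a subsemigroup is recoverable from its gaps together with the fact that finitely many words of bounded length admit only finitely many $n$-element subsets. The only point requiring a moment's care is the first one---that the correspondence $\Lambda \mapsto G(\Lambda)$ is injective---which is immediate since $\Lambda = FS_r \setminus G(\Lambda)$.
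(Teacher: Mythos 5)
Your proof is correct and follows essentially the same route as the paper: both use Lemma~\ref{frob-bound} to confine every gap to the set of words of length at most $2n-1$, of cardinality $s=r+r^2+\dots+r^{2n-1}$, and then bound the number of gap sets by $\binom{s}{n}$. The only difference is that you make explicit the (immediate) injectivity of $\Lambda \mapsto G(\Lambda)$, which the paper leaves implicit.
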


\begin{proof}
Let $s=r+r^2+\dots+r^{2n-1}$ be the number of words of $FS_{r}$ of length less than $2n$, then there are at most $\binom{s}{n}$ possible choices for the set of gaps.
\end{proof}

Another basic result that will be used frequently is the following:

\begin{lemma}
Every finite index subsemigroup of $FS_r$ has a finite unique minimal generating set.
\end{lemma}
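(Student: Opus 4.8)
The plan is to establish two separate facts: first that every finite index subsemigroup $\Lambda$ of $FS_r$ has \emph{some} finite generating set, and second that a minimal generating set exists and is unique. For the first part, I would observe that by Lemma~\ref{frob-bound} every gap has length at most $2n-1$, so every element of $\Lambda$ of length at least $2n$ can be written as a product of two shorter elements, at least one of which lies in $\Lambda$; more carefully, any sufficiently long word in $\Lambda$ factors as a product of strictly shorter words that themselves lie in $\Lambda$. Iterating this shows $\Lambda$ is generated by its elements of bounded length, of which there are only finitely many, giving finite generation.

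The heart of the argument is the existence and uniqueness of a \emph{minimal} generating set. The key notion is \textbf{indecomposability}: call an element $w \in \Lambda$ indecomposable if it cannot be written as $w = uv$ with $u, v \in \Lambda$. I would let $I$ denote the set of indecomposable elements of $\Lambda$ and argue that $I$ is precisely the unique minimal generating set. First I would show $I$ generates $\Lambda$: given any $w \in \Lambda$, if $w$ is indecomposable we are done, otherwise factor it as a product of two shorter elements of $\Lambda$ and induct on word length, since the shortlex order is a well-order and both factors are strictly shorter. Next I would show $I$ is contained in every generating set: if $A$ generates $\Lambda$ and $w \in I$, then $w$ is a product of elements of $A$, but indecomposability forces this product to be the single element $w$, so $w \in A$. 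Together these give that $I$ is contained in every generating set and is itself a generating set, which simultaneously yields minimality (no proper subset generates, since every generating set must contain all of $I$) and uniqueness (any minimal generating set must contain $I$ and, being minimal, cannot properly contain the generating set $I$, so equals $I$).

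The main obstacle, and the one subtlety to handle with care, is the factorization step underlying both finite generation and the claim that indecomposables generate. In a free semigroup a word $w$ of length at least $2$ can be cut at any position into a prefix and a suffix, but there is no guarantee \emph{a priori} that either piece lies in $\Lambda$; one must use the gap bound to ensure a usable factorization exists. Concretely, for the decomposability argument I only need that every non-indecomposable element factors into two strictly shorter $\Lambda$-elements, which is immediate from the definition of indecomposable, so the induction on shortlex order goes through cleanly. Finiteness of $I$ then follows because $I$ is contained in the finite generating set by bounded length produced in the first part, or directly from the observation that indecomposable elements cannot be too long without containing an internal $\Lambda$-factorization. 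I would close by remarking that finiteness of $I$ is automatic once we know $\Lambda$ is finitely generated, since $I$ embeds into any finite generating set.
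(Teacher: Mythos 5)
Your proof is correct, but it takes a genuinely different route from the paper's. The paper establishes finite generation exactly as you do (every word of length at least $4n$ splits at position $2n$ into two words of length at least $2n$, both in $\Lambda$ by Lemma \ref{frob-bound}), but then proves uniqueness by a cardinality argument: it takes two generating sets $X \neq Y$ of minimal size $k$, picks $y_i \in Y \setminus X$, factors it over $X$ into at least two strictly shorter pieces, observes that by length considerations each such piece must be expressible over $Y \setminus \{y_i\}$, and concludes that $Y \setminus \{y_i\}$ already generates, contradicting minimality. You instead characterize the minimal generating set intrinsically as the set $I$ of indecomposable elements (those $w \in \Lambda$ admitting no factorization $w = uv$ with $u,v \in \Lambda$), shown to generate by induction on word length and to lie inside every generating set, since an indecomposable cannot be a product of two or more generators. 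Your approach buys more than the paper's: it identifies $MG(\Lambda)$ explicitly, which is precisely the content of the paper's Remark \ref{min-gen-rem} (the prefix/suffix condition there is indecomposability restated), and it settles uniqueness whether ``minimal'' is read as minimal cardinality or minimal under inclusion, whereas the paper's proof is pitched at cardinality-minimality and leaves the characterization to the separate remark. The paper's argument is marginally shorter once finite generation is in hand; yours is the more standard and more informative one. One wrinkle: your first formulation of the finite-generation step (length at least $2n$ gives a factorization with at least one factor in $\Lambda$) would not suffice on its own, but you correct it yourself --- the usable statement is that any word of length at least $4n$ cuts into two pieces each of length at least $2n$, hence both in $\Lambda$ --- and for generation by indecomposables you rightly note that only the trivial fact that a decomposable element splits into two strictly shorter elements of $\Lambda$ is needed, so the induction goes through cleanly.
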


\begin{proof}
Let $\Lambda \subseteq FS_r$ be an index $n$ subsemigroup of $FS_r$. It is clear that $\Lambda$ is finitely generated as every word of length at least $4n$ can be written as a product of two words of length at least $2n$ which are all in $\Lambda$ by Lemma \ref{frob-bound}. Therefore there is a minimal size, say $k \in \mathbb{N}$, for any generating set of $\Lambda$. Take two minimal generating sets $X=\{x_1,\ldots,x_k\}$ and $Y=\{y_1,\ldots,y_k\}$ of $\Lambda$ and assume $X \ne Y$. Let $y_i \in Y \setminus X$, then $y_i=x_{i_1}x_{i_2}\cdots x_{i_l}$ for some $x_{i_1},x_{i_2},\ldots,x_{i_l} \in X$ with $l \ge 2$. Therefore $|y_i| > |x_{i_j}|$ for each $1 \le j \le l$. Now since $Y$ is a generating set, every $x_{i_j}$ is a product of elements from $Y \setminus \{y_i\}$, and so $y_i$ is a product of elements from $Y \setminus \{y_i\}$ contradicting the minimality of $Y$, hence $X=Y$.
\end{proof}

Given any finitely generated semigroup $S$, let $a_n(S)$ denote the number of index $n$ subsemigroups of $S$, and let $s_n(S)=\sum_{i=1}^na_i(S)$ be the partial sums. That $a_n(S)$ is always finite is implied by Corollary \ref{finitely-many-index-n} and the next result which essentially says that free semigroups have the fastest subsemigroup growth.

\begin{propn} \label{free-max-growth}
If $S$ can be generated by $r$ elements, then $a_n(S) \le a_n(FS_r)$ for all $n$.
\end{propn}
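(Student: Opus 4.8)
The plan is to exploit the universal property of the free semigroup. Since $S$ is generated by $r$ elements, fixing such a generating set yields a surjective homomorphism $\phi \colon FS_r \to S$ sending each $g_i$ to the $i$-th generator. I would build an explicit injection from the set of index $n$ subsemigroups of $S$ into the set of index $n$ subsemigroups of $FS_r$; the inequality $a_n(S) \le a_n(FS_r)$ then follows immediately. The naive attempt, $T \mapsto \phi^{-1}(T)$, fails to preserve index: a single congruence class can be infinite, so the preimage of a size-$n$ complement need not have size $n$.

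To fix this I would use the shortlex normal form. As $\phi$ is surjective, every $s \in S$ has preimages, and I let $\sigma(s)$ be the shortlex-least word of $FS_r$ with $\phi(\sigma(s)) = s$; this $\sigma \colon S \to FS_r$ is a set-theoretic section of $\phi$, in particular injective. The technical heart of the argument is the claim that the set $\sigma(S)$ of normal forms is closed under taking factors: if $\sigma(s) = uv$ then $u = \sigma(\phi(u))$ and $v = \sigma(\phi(v))$. This is proved by a standard exchange argument. If $u$ were not shortlex-least for its image, replacing it by a strictly smaller representative $u'$ would give $\phi(u'v) = s$ with $u'v <_{\mathrm{slex}} uv$, contradicting minimality of $\sigma(s)$; the same works on the right for $v$, using that the shortlex order on a free semigroup is compatible with both left and right multiplication (comparing lengths first handles $|u'| < |u|$, and an equal-length lexicographic comparison handles the rest).

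Given a subsemigroup $T \le S$ with gap set $G = S \setminus T$ of size $n$, I would set $\Lambda_T = FS_r \setminus \sigma(G)$, that is, delete from $FS_r$ exactly the $n$ normal forms of the gaps. Since $\sigma$ is injective, $|FS_r \setminus \Lambda_T| = |G| = n$, so $\Lambda_T$ has the correct index. It remains to check that $\Lambda_T$ is a subsemigroup, equivalently that whenever $uv \in \sigma(G)$ one of $u,v$ lies in $\sigma(G)$. Here I combine the two ingredients: writing $uv = \sigma(g)$ with $g \in G$, multiplicativity of $\phi$ gives $\phi(u)\phi(v) = g \notin T$, so since $T$ is a subsemigroup at least one of $\phi(u), \phi(v)$ lies in $G$; and by factor-closedness the corresponding factor equals $\sigma$ of its image, hence lies in $\sigma(G)$.

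Finally, the map $T \mapsto \Lambda_T$ is injective because $T$ is recoverable from $\Lambda_T$: the complement $\sigma(G)$ is determined by $\Lambda_T$, and applying $\phi$ recovers $G = \phi(\sigma(G))$ and hence $T = S \setminus G$. I expect the main obstacle to be the factor-closedness of shortlex normal forms and the closely related verification that deleting $\sigma(G)$ really leaves a multiplicatively closed set; once that is in hand the counting is immediate, and degenerate cases (for instance $|S| \le n$, where $a_n(S) = 0$) are handled trivially by the same injection.
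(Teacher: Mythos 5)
Your proposal is correct and follows essentially the same route as the paper: both take the shortlex-least preimage section $\sigma$ of the epimorphism $\phi\colon FS_r \to S$, delete the normal forms of the gaps, and verify closure under products via the same exchange argument using compatibility of shortlex with left and right multiplication. The only cosmetic difference is that you isolate this exchange argument as a standalone factor-closedness lemma (and spell out injectivity of $T \mapsto \Lambda_T$, which the paper leaves implicit), whereas the paper runs the minimality contradiction inline.
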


\begin{proof}
Let $S=\langle s_1,\dots,s_r\rangle$, then there exists an epimorphism $\phi:FS_r \to S$. Given any $s \in S$ there exists a minimal $\overline{s} \in FS_r$ with respect to the shortlex order such that $\phi(\overline{s})=s$. Let $T$ be an index $n$ subsemigroup of $S$ with $S \setminus T=\{w_1,\dots,w_n\}$ and let $G=\{\overline{w_1},\dots,\overline{w_n}\}$. The result follows if we can show that $\Lambda=FS_r \setminus G$ is a subsemigroup of $FS_r$. Assume that there exists $\overline{w_i} \in G$, and $x,y \in \Lambda$ such that $\overline{w_i}=xy$, then $w_i=\phi(\overline{w_i})=\phi(x)\phi(y)$. Since $T$ is a subsemigroup, either $\phi(x)$ or $\phi(y)$ is in $S \setminus T$. Without loss of generality, assume $\phi(x)=w_j \in S \setminus T$. Since $\phi(\overline{w_j})=\phi(x)$ and $x \notin G$, then $\overline{w_j} < x$ by the minimality of $\overline{w_j}$. However, that implies $\overline{w_j}y < \overline{w_i}$ but $\phi(\overline{w_j}y)=\phi(\overline{w_j})\phi(y)=\phi(x)\phi(y)=w_i$ which contradicts the minimality of $\overline{w_i}$ and so $\Lambda$ is a subsemigroup.
\end{proof}

Recall that given two sequences $f(n),g(n)$, we say that:
\begin{itemize}
\item $f(n)=O(g(n))$ if there exists a constant $C>0$ such that $f(n) \le C \cdot g(n)$ for all large $n$.
\item $f(n) \asymp g(n)$ if $f(n)=O(g(n))$ and $g(n)=O(f(n))$. 
\item $f(n) \sim g(n)$ if $f(n)/g(n) \rightarrow 1$ for large $n$.
\end{itemize}

We say that a semigroup $S$ has {\bf subsemigroup growth of strict type $f(n)$} if
\[
log(s_n(S)) \asymp log(f(n)).
\]
Note that, unless it has a subscript, $log$ is always base $2$.

\section{Minimal generators}
In this section we show how the minimal generators of a finite index subsemigroup of $FS_r$ are connected to its set of gaps. In particular we show the different forms that a generator can take and use this information to outline an algorithm for calculating the set of minimal generators from the set of gaps.

Given a finite index subsemigroup $\Lambda \subseteq FS_r$, let $MG(\Lambda)$ denote the set of minimal generators of $\Lambda$.
\begin{rem}\label{min-gen-rem}
Note that $h \in MG(\Lambda)$ if and only if $h \in \Lambda$ and $\{h_{\text{pre}(i)},h_{\text{suf}(|h|-i)}\} \cap G(\Lambda) \neq \emptyset$ for all $1 \le i \le |h|-1$. This remark will be used ubiquitously without reference throughout this paper.
\end{rem}

Given any index $n$ subsemigroup $\Lambda \subseteq FS_{r}$ we can construct new semigroups in the following ways:
\begin{enumerate}
\item Given $f=f(\Lambda)$, let $\Lambda^f$ denote the set $\Lambda \cup \{f\}$, which is an index $n-1$ subsemigroup as every product $wf$ or $fw$ with $w \in \Lambda$, is larger than $f$ in shortlex order (and hence an element of $\Lambda^f$).
\item Given any $h \in MG(\Lambda)$, let $\Lambda_h$ denote the set $\Lambda \setminus \{h\}$. This is a subsemigroup of index $n+1$, as no pair of elements $x_{1}$,$x_{2} \in \Lambda$ satisfy $x_{1}x_{2}=h$. This is the content of Remark \ref{min-gen-rem}.
\end{enumerate}

Observe that in case (1), $f$ becomes a minimal generator of $\Lambda^f$, and in case (2), $h$ is no longer a minimal generator. In general these constructions will make changes to the minimal generating set. Minimal generators of $\Lambda$ that are no longer minimal generators of $\Lambda^f$ when $f$ is added are said to \textbf{turn off}. Similarly, elements of $\Lambda$ that become new minimal generators of $\Lambda_{h}$ are said to \textbf{turn on}.

The remainder of this section is considering the interplay of these two constructions and their effect on the minimal generating set.

\begin{lemma} \label{min-gen-forms-lemma}
Given a finite index subsemigroup $\Lambda$ of $FS_r$, let $h \in MG(\Lambda)$ and let $Y=MG(\Lambda_h) \setminus (MG(\Lambda)\setminus \lbrace h \rbrace)$ be the set of new generators turned on when we remove $h$ from $\Lambda$. Then $x \in Y$ only if it has one of the following three forms:
\begin{enumerate}
\item $x=hw$ with $w \in MG(\Lambda)$; or \label{form-gw}
\item $x=wh$ with $w \in MG(\Lambda)$; or \label{form-wg}
\item $x=hwh$ with $w \in MG(\Lambda)$.  \label{form-gwg}
\end{enumerate}
\end{lemma}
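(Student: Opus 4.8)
The plan is to translate membership in $Y$ into two combinatorial conditions on the factorisations of $x$, and then to extract the three claimed shapes by repeatedly sliding newly-found factors against $h$.

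First I would record, via Remark \ref{min-gen-rem}, exactly what $x \in Y$ means. Here $x \in MG(\Lambda_h)$ but $x \notin MG(\Lambda)$, and $x \in \Lambda_h$ (so $x \in \Lambda$ and $x \neq h$). Since $G(\Lambda_h) = G(\Lambda) \cup \{h\}$, the remark yields two facts. Call a split $x = pq$ a \emph{good split} if both $p, q \in \Lambda$. Then: (i) because $x \notin MG(\Lambda)$ while $x \in \Lambda$, at least one good split of $x$ exists; and (ii) because $x \in MG(\Lambda_h)$, for every split $x = pq$ at least one factor lies in $G(\Lambda) \cup \{h\}$. Combining these, in any good split both factors avoid $G(\Lambda)$, so one of them must equal $h$. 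Thus a good split exists, and \emph{every} good split of $x$ has $h$ as its first or second factor.

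Next I would run the case analysis on a fixed good split. Say $x = hq$ with $q \in \Lambda$ (the case $x = ph$ is symmetric). If $q \in MG(\Lambda)$ we are in form (1). Otherwise $q$ is a nontrivial product, so it has a good split $q = q_1 q_2$ with $q_1, q_2 \in \Lambda$; then $x = (hq_1)(q_2)$ is again a good split of $x$, since $\Lambda$ is closed under products. By (ii) either $hq_1 = h$ or $q_2 = h$, and the former is impossible because $q_1$ is nonempty, so $q_2 = h$ and $x = hq_1h$.

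Finally I would show the interior word is a minimal generator, which is the decisive step. If $q_1 \notin MG(\Lambda)$, then $q_1 = ab$ for some $a, b \in \Lambda$, and now $x = (ha)(bh)$ is a good split whose two factors are both strictly longer than $h$, hence neither equals $h$ — contradicting (ii). Therefore $q_1 \in MG(\Lambda)$ and $x$ has form (3); the symmetric branch $x = ph$ produces forms (2) and (3) in the same way. The main obstacle is bookkeeping: one must keep every partial factor inside $\Lambda$ so that the constructed splits are genuinely good, and the crux of the whole argument is the ``middle split'' $(ha)(bh)$, both of whose factors are too long to be $h$, which is what forces the interior word to be indecomposable.
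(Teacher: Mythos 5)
Your proposal is correct and takes essentially the same route as the paper: both proofs use Remark \ref{min-gen-rem} to observe that any factorisation $x=x_1x_2$ with $x_1,x_2\in\Lambda$ must have $x_1=h$ or $x_2=h$ (your ``good split'' bookkeeping), then split off $h$ and analyse the cofactor, arriving at the same three forms. The only local difference is the final step --- the paper writes the cofactor as a product $a_1\cdots a_k$ of minimal generators and invokes the minimality of $h$ in $\Lambda$ to force $k=2$, whereas you take one further binary split and derive the contradiction from the factorisation $(ha)(bh)$, both of whose factors lie in $\Lambda$ and are strictly longer than $h$; this is an equally valid, marginally more self-contained way of forcing the middle word into $MG(\Lambda)$.
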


\begin{proof}
Let $x \in Y$ and note that $G(\Lambda_h)=G(\Lambda) \cup \{h\}$. Then $x \in \Lambda_h$ and $\{x_{\text{pre}(i)},x_{\text{suf}(|x|-i)}\} \cap \left(G(\Lambda) \cup \{h\}\right) \neq \emptyset$ for all $1 \le i \le |x|-1$, but $x \notin MG(\Lambda)$ and so there exists some $x_1,x_2 \in \Lambda$ such that $x=x_1x_2$. Therefore either $x_1=h$ or $x_2=h$. We consider each case separately:
\begin{enumerate}
\item Let $x_1=h$. If $x_2 \in MG(\Lambda)$ then $x$ is of form \ref{form-gw}. So assume $x_2 \notin MG(\Lambda)$ and $x_2=a_1 \cdots a_k$, where $a_1,\ldots,a_k \in MG(\Lambda)$ and $k \ge 2$. Since $x \in MG(\Lambda_h)$ then either $ha_1 \in G(\Lambda) \cup \{h\}$ or $a_2 \cdots a_k \in G(\Lambda) \cup \{h\}$. Now if $ha_1 \in G(\Lambda) \cup \{h\}$, then $ha_1 \in G(\Lambda)$ which is a contradiction as both $h,a_1 \in \Lambda$. Therefore $a_2 \cdots a_k \in G(\Lambda) \cup \{h\}$ which implies $a_2 \cdots a_k=h$. If $k \ge 3$ then we get a contradiction as $h$ is a minimal generator in $\Lambda$. Hence $k=2$, $a_2=h$ and $x$ satisfies form \ref{form-gwg}.
\item Let $x_2=h$. If $x_1 \in MG(\Lambda)$ then it is of form \ref{form-wg}. Otherwise the proof is similar to the previous case and $x$ is of form \ref{form-gwg}. \qedhere
\end{enumerate}
\end{proof}

\begin{lemma} \label{lemma-min-gens-from-gaps}
Given any finite index subsemigroup $\Lambda$ of $FS_r$, we have $h \in MG(\Lambda)$ only if it has one of the following four forms:
\begin{enumerate}
\item $h=g_i$ where $g_i \in X_r$; or \label{form-g}
\item $h=xg_i$ where $x \in G(\Lambda)$, $g_i \in X_r$; or \label{form-xg}
\item $h=g_ix$ where $x \in G(\Lambda)$, $g_i \in X_r$; or \label{form-gx}
\item $h=xg_iy$, where $x,y \in G(\Lambda)$, $g_i \in X_r$. \label{form-xgy}
\end{enumerate}
\end{lemma}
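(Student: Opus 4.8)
The plan is to argue directly from the minimal-generator criterion of Remark \ref{min-gen-rem}, carrying out a case analysis on the length of $h$ and on whether its first and last letters lie in $\Lambda$ or in $G(\Lambda)$. Write $h = g_{j_1} g_{j_2} \cdots g_{j_k}$ with $k = |h|$. If $k = 1$ then $h = g_{j_1} \in X_r$ and we are in form \ref{form-g}, so I would assume $k \ge 2$ throughout the rest.

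First I would dispose of the two ``one-sided'' cases. If the first letter $h_{\text{pre}(1)} = g_{j_1}$ lies in $\Lambda$, then applying the criterion of Remark \ref{min-gen-rem} at the split $i = 1$ forces $h_{\text{suf}(k-1)} \in G(\Lambda)$; setting $x = h_{\text{suf}(k-1)}$ (a nonempty gap) gives $h = g_{j_1} x$, which is form \ref{form-gx}. Symmetrically, if the last letter $h_{\text{suf}(1)} = g_{j_k}$ lies in $\Lambda$, the criterion at the split $i = k-1$ forces $h_{\text{pre}(k-1)} \in G(\Lambda)$, giving form \ref{form-xg}.

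The remaining, and main, case is when both $g_{j_1}$ and $g_{j_k}$ lie in $G(\Lambda)$; here I must produce the two-sided form \ref{form-xgy}, and the only real work is locating the single interior letter to isolate. The idea is to take $a = \max\{\, i : 1 \le i \le k-1,\ h_{\text{pre}(i)} \in G(\Lambda)\,\}$, which is well defined and satisfies $a \ge 1$ since $h_{\text{pre}(1)} = g_{j_1} \in G(\Lambda)$. I would then isolate the $(a+1)$-th letter and set $x = h_{\text{pre}(a)}$ and $y = h_{\text{suf}(k-a-1)}$, so that $h = x\, g_{j_{a+1}}\, y$. By the choice of $a$ the prefix $x$ is a nonempty gap. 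If $a = k-1$ then $y$ is empty and $h = x\, g_{j_k}$ is in fact of form \ref{form-xg}; otherwise $a \le k-2$, so $h_{\text{pre}(a+1)}$ is among the prefixes of length at most $k-1$ and, by maximality of $a$, lies in $\Lambda$. Feeding the split $i = a+1$ into Remark \ref{min-gen-rem} then forces $y = h_{\text{suf}(k-a-1)} \in G(\Lambda)$, so both $x$ and $y$ are gaps and $h$ has form \ref{form-xgy}.

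The crux is thus the definition of $a$ as the longest gap-prefix: it simultaneously guarantees that everything to the left of the isolated letter is a gap and, via maximality, that the prefix ending at the isolated letter lies in $\Lambda$, which is exactly what the minimal-generator criterion needs in order to push the complementary suffix into $G(\Lambda)$. I expect no essential difficulty beyond the bookkeeping at the empty-word boundary (when $x$ or $y$ degenerates), which is precisely why forms \ref{form-g}--\ref{form-gx} must appear alongside the generic two-sided form \ref{form-xgy}.
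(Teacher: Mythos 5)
Your proof is correct, but it takes a genuinely different route from the paper's. The paper proves this lemma by induction on the index: given $\Lambda$ of index $n+1$ it passes to $\Lambda^f$ of index $n$ by adjoining the Frobenius, and for any $h \in MG(\Lambda)$ not already in $MG(\Lambda^f)$ it invokes Lemma \ref{min-gen-forms-lemma} to write $h$ as $fw$, $wf$ or $fwf$ with $w \in MG(\Lambda^f)$, then massages each shape into one of the four forms (using, e.g., that $fg_i \in \Lambda$ because $f$ is the Frobenius). Your argument is direct and self-contained: it uses only the prefix/suffix criterion of Remark \ref{min-gen-rem}, and the key device --- taking $a$ to be the length of the longest proper prefix of $h$ lying in $G(\Lambda)$, so that $h_{\text{pre}(a)} \in G(\Lambda)$ while, by maximality, $h_{\text{pre}(a+1)} \in \Lambda$ whenever $a \le |h|-2$, which at the split $i=a+1$ forces $h_{\text{suf}(|h|-a-1)} \in G(\Lambda)$ --- cleanly isolates the middle letter. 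I checked the details: your three cases (first letter in $\Lambda$; last letter in $\Lambda$; both end letters gaps) are exhaustive, the boundary case $a=|h|-1$ correctly degenerates to form \ref{form-xg}, and every split index you feed into the criterion lies in the valid range $1 \le i \le |h|-1$. As to what each approach buys: yours is shorter, avoids the induction entirely, does not depend on Lemma \ref{min-gen-forms-lemma}, and in fact produces a canonical decomposition (via the longest gap-prefix) rather than mere existence; the paper's induction, by contrast, reuses the turn-on/turn-off machinery and the constructions $\Lambda^f$, $\Lambda_h$ that it has already developed for the subsemigroup tree, so the lemma there comes almost for free within the framework driving the rest of the paper.
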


\begin{proof}
We prove this by induction on the index. Let $|G(\Lambda)|=0$, then $x$ must be of form \ref{form-g}. Assume the statement is true for all subsemigroups of index $n$. Given any subsemigroup $\Lambda$ with index $n+1$, we can construct a subsemigroup of index $n$ by considering $\Lambda^f$, where $f=f(\Lambda)$. If $h \in MG(\Lambda^f)$, then by assumption $h$ has the correct form. Suppose that $h \not \in MG(\Lambda^f)$ then $h$ is turned on when we remove $f$ from $\Lambda^f$, so by Lemma \ref{min-gen-forms-lemma} it has one of the forms $wf$, $fw$ or $fwf$ where $w \in MG(\Lambda^f)$. We consider each case separately:

\begin{enumerate}
\item Let $h=fw$ where $w \in MG(\Lambda^f)$, then either $|w|=1$ and $h$ is of form \ref{form-xg}, or $|w| \ge 2$ and $w=g_iw'$ for some $g_i \in X_r$, $w' \in FS_r$. As $f$ is the Frobenius of $\Lambda$, we know that $fg_i \in \Lambda$. Now $h \in MG(\Lambda)$, so we must have $w' \in G(\Lambda)$ and $h$ is of form \ref{form-xgy}.
\item Let $h=wf$ where $w \in MG(\Lambda^f)$, then either $|w|=1$ and $h$ is of form \ref{form-gx}, or similarly to the previous case, $h$ is of form \ref{form-xgy}.
\item Let $h=fwf$, then either $|w|=1$ and $h$ is of form \ref{form-xgy}, or $|h| \ge 2$ and $fwf=(fx_1)(x_2f)$ is not a minimal generator of $\Lambda$. \qedhere
\end{enumerate}
\end{proof}

We can now use Lemma \ref{lemma-min-gens-from-gaps} to outline an algorithm that will calculate the set $MG(\Lambda)$ from the set $G(\Lambda)$. Let $\epsilon$ be the empty word.

\clearpage

\begin{algorithm} \caption{Find minimal generators from set of gaps}\label{alg:findmingen}
\begin{algorithmic}[1]
\Require{$G(\Lambda)$ the set of gaps of a finite index subsemigroup $\Lambda$ of $FS_r$, and $X_r$}
\Procedure{Generators}{$G(\Lambda),X_r$}
\State Gens $\gets \{\}$
\ForAll{$x \in G(\Lambda) \cup \{\epsilon\}$}
\ForAll{$y \in G(\Lambda) \cup \{\epsilon\}$}
\ForAll{$g \in X_r$}
\If{\textsc{MinGen}$(xgy,G(\Lambda))$}
\State Gens $\gets$ Gens $\cup \{xgy\}$
\EndIf
\EndFor
\EndFor
\EndFor
\State \textbf{return} Gens
\EndProcedure
\end{algorithmic}
\end{algorithm}

Where \textsc{MinGen}$(w,G(\Lambda))$ determines whether or not a word $w \in \Lambda$ is a minimal generator of $\Lambda$.

\begin{algorithm} \caption{Check if a word is a minimal generator}\label{alg:checkmingen}
\begin{algorithmic}[1]
\Require{A word $w \in \Lambda$ and the set of gaps $G(\Lambda)$}
\Procedure{MinGen}{$w,G(\Lambda)$}
\State Pass $\gets$ FALSE
\If{$w \notin G(\Lambda)$}
\State Pass $\gets$ TRUE
\State $n \gets$ length$(w)$
\State $i \gets 1$
\While{Pass and $i < n$}
\If{$w_{\text{pre}(i)} \notin G(\Lambda)$ and $w_{\text{suf}(n-i)} \notin G(\Lambda)$}
\State Pass $\gets$ FALSE
\EndIf
\State $i \gets i+1$
\EndWhile
\EndIf
\State \textbf{return} Pass
\EndProcedure
\end{algorithmic}
\end{algorithm}

\section{Subsemigroup growth}\label{sect:uandl}

The sequence $a_n(FS_r)$ has been extensively studied for when $r=1$, this is precisely the number of numerical semigroups of genus $n$. It has recently been proved \cite{MR3053785} that $a_n(FS_1)$ has `Fibonacci like' growth, that is, $a_n(FS_1) \sim K \phi^n$, where $K$ is a constant and $\phi=\frac{1+\sqrt{5}}{2}$ is the golden ration. In this section we give lower and upper bounds for $a_n(FS_r)$ and in particular show that for $r \ge 2$, $a_n(FS_r)$ grows superexponentially in $n$ with strict growth type $n^n$. We imitate the methods used in \cite{MR2564064} by constructing a tree of all finite index subsemigroups of $FS_r$.

\subsection{Subsemigroup tree}
It is clear that every subsemigroup $\Lambda \subseteq FS_r$ of index $n+1$ gives rise to a subsemigroup $\Lambda^f$ of index $n$. Therefore, every index $n$ subsemigroup can be obtained from a subsemigroup of index $n-1$ by removing a minimal generator larger than the Frobenius (using the shortlex order). Given a subsemigroup $\Lambda$ of finite index, any subsemigroup $\Lambda_{h}$ that is obtained by removing a minimal generator $h$ larger than the Frobenius  $f(\Lambda)$ is a descendant. This gives a method for obtaining a tree of all finite index subsemigroups of $FS_r$. 

For example: given the index $1$ subsemigroup $\{a\}^c$ of $FS_2=\langle a,b \rangle$, the minimal generators of $\{a\}^c$ are $\{b,a^2,ab,ba,a^3,aba\}$ and they are all bigger than the Frobenius, the minimal generators of $\{b\}^c$ are $\{a,ab,ba,b^2,bab,b^3\}$ but $a$ is not bigger than the Frobenius and so $\{a,b\}^c$ is a descendant of $\{a\}^c$ but not of $\{b\}^c$.

\medskip

So the beginning of the tree of all index $n$ subsemigroups of $FS_2$ looks like Figure 1. 

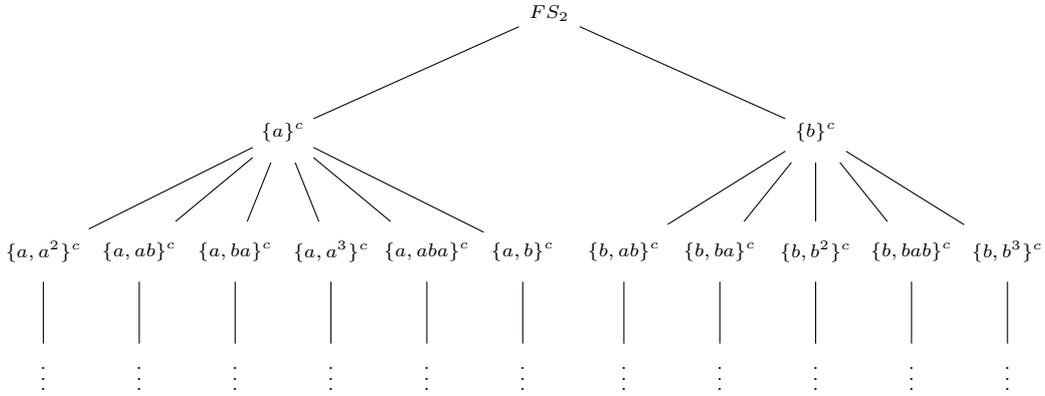
\begin{figure}[ht] \center \label{beg-of-tree}
\begin{tikzpicture}[level distance=4.5em]
\tikzstyle{level 1}=[sibling distance=25em]
\tikzstyle{level 2}=[sibling distance=4.5em]
\scriptsize
\node [rectangle, minimum size=2.8em] (FS2) {$FS_2$}
child {node [rectangle, minimum size=2.8em] (a) {$\{a\}^c$}
	child {node [rectangle, minimum size=2.8em] (a2) {$\{a,a^2\}^c$}
		child {node {$\vdots$}}}
	child {node [rectangle, minimum size=2.8em] (ab) {$\{a,ab\}^c$}
		child {node {$\vdots$}}}
	child {node [rectangle, minimum size=2.8em] (ba) {$\{a,ba\}^c$}
		child {node {$\vdots$}}}
	child {node [rectangle, minimum size=2.8em] (a3) {$\{a,a^3\}^c$}
		child {node {$\vdots$}}}	
	child {node [rectangle, minimum size=2.8em] (aba) {$\{a,aba\}^c$}
		child {node {$\vdots$}}}
		child {node [rectangle, minimum size=2.8em] (a2) {$\{a,b\}^c$}
	child {node {$\vdots$}}}
}
child {node [rectangle,minimum size=2.8em] (b) {$\{b\}^c$}
	child {node [rectangle, minimum size=2.8em] (ab) {$\{b,ab\}^c$}
		child {node {$\vdots$}}}
	child {node [rectangle, minimum size=2.8em] (ba) {$\{b,ba\}^c$}
		child {node {$\vdots$}}}
	child {node [rectangle, minimum size=2.8em] (b2) {$\{b,b^2\}^c$}
		child {node {$\vdots$}}}
	child {node [rectangle, minimum size=2.8em] (bab) {$\{b,bab\}^c$}
		child {node {$\vdots$}}}	
	child {node [rectangle, minimum size=2.8em] (b3) {$\{b,b^3\}^c$}
		child {node {$\vdots$}}}
};
\end{tikzpicture}
\caption{Subsemigroup tree of $FS_2$}
\end{figure}

It is clear that $a_n(FS_r)$ is the number of nodes on the $n^\text{th}$ level of this tree.

\medskip

Inspired by the numerical semigroup situation \cite{MR2498791,MR2564064}, we say that a finite index subsemigroup $\Lambda \subseteq FS_r$ is {\bf ordinary} if $f(\Lambda) < m(\Lambda)$ in the shortlex order, that is, all the gaps are `at the beginning'. For a given rank $r$, clearly there is one ordinary subsemigroup for each index $n$ which we denote by $O_r(n)$.

\begin{lemma} \label{num-ord-desc}
In this tree the subsemigroup $O_r(n)$ has $(r-1)n^2+(2r-1)n+r$ descendants.
\end{lemma}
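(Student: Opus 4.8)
The plan is to first reduce the count of descendants to the size of a minimal generating set. Since $O_r(n)$ is ordinary, its Frobenius $f = f(O_r(n))$ is smaller in shortlex than the multiplicity $m(O_r(n)) = \min \Lambda$, where $\Lambda = O_r(n)$; hence every element of $\Lambda$, and in particular every minimal generator, exceeds $f$. Thus the descendants of $O_r(n)$ are precisely the subsemigroups $\Lambda_h$ for $h \in MG(O_r(n))$, and the lemma reduces to showing $|MG(O_r(n))| = (r-1)n^2 + (2r-1)n + r$.

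Next I would pin down the shape of the gap set. Writing $N_k = r + r^2 + \cdots + r^k$, let $L$ be determined by $N_{L-1} < n \le N_L$ and put $m = n - N_{L-1}$, so that $G(O_r(n))$ consists of all words of length $< L$ together with the $m$ shortlex-smallest words of length $L$ (call this set $G_L$, with $|G_L| = m$). The decisive observation is that any $u \in \Lambda$ has $|u| \ge L$, since every shorter word is a gap; consequently a factorization $w = uv$ with $u, v \in \Lambda$ forces $|w| \ge 2L$. Dually, every word of length $> L$ lies in $\Lambda$, as it exceeds all gaps.

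I would then count minimal generators length by length, using that $w \in \Lambda$ is a minimal generator iff no split $w = uv$ has both factors in $\Lambda$. From the length bound, every length-$L$ word in $\Lambda$ (there are $r^L - m$) and every word of length $\ell$ with $L < \ell < 2L$ (there are $\sum_{L<\ell<2L} r^\ell$) is automatically a minimal generator, as a factorization would require total length $\ge 2L$; and no word of length $\ge 2L+2$ is a generator, since splitting off a prefix of length $L+1$ produces two factors of length $> L$, both in $\Lambda$. The two intermediate classes require care: for length $2L$ the only candidate split is into two halves of length $L$, so $w$ is a generator iff one half lies in $G_L$, giving $2mr^L - m^2$ by inclusion–exclusion; for length $2L+1$ the candidate splits have prefix lengths $L$ and $L+1$, so $w$ is a generator iff both its first $L$ and its last $L$ letters form words in $G_L$, with the middle letter free, giving $m \cdot r \cdot m = m^2 r$ words.

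Finally I would sum the four contributions and simplify. Using $\sum_{L<\ell<2L} r^\ell = r^L N_{L-1}$ and $N_{L-1} = n - m$, the total collapses: the quadratic part assembles into $(r-1)\bigl((n-m)+m\bigr)^2 = (r-1)n^2$ and the linear part into $(2r-1)n$, so that all dependence on $m$ (equivalently on $L$) cancels and the count equals $(r-1)n^2 + (2r-1)n + r$. The main obstacle is the bookkeeping of the two boundary length classes $2L$ and $2L+1$ together with the degenerate cases, namely $L=1$ (where the middle range $L < \ell < 2L$ is empty) and $m = r^L$ (the full-level boundary); once these are checked to be covered by the same formulas, the algebraic cancellation is routine, and sanity checks such as $O_2(1)$ giving $6$ confirm the count.
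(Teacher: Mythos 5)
Your proposal is correct and takes essentially the same route as the paper's proof: both reduce the count to $|MG(O_r(n))|$ via ordinariness, describe the gap set as all words of length $<L$ plus $m$ words of length $L$ (the paper's $k$ and $i$), count minimal generators in the same length-stratified cases (automatic for lengths $L$ to $2L-1$, inclusion--exclusion giving $2mr^L-m^2$ at length $2L$, both ends in $G_L$ giving $rm^2$ at length $2L+1$, nothing longer), and perform the same algebraic collapse to $(r-1)n^2+(2r-1)n+r$. Your explicit handling of the boundary cases $L=1$ and $m=r^L$ is a minor bookkeeping refinement the paper leaves implicit.
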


\begin{proof}Let $O_r(n)$ be the ordinary index $n$ subsemigroup of $FS_r$ and let $k=|f(O_r(n))|$. Figure 2 
represents the set of words of $G(O_r(n))$ within the set of all words in $FS_r$:

\begin{figure}[ht] \center \label{fig-ordinary-gaps}
\begin{tikzpicture}[scale=0.36]

\draw (5,0) -- (0,-15) -- (10,-15) -- (5,0);
\draw (0,-15) -- (-1/3,-16) -- (31/3,-16) -- (10,-15);
\draw (-1/3,-16) -- (-2/3,-17) -- (32/3,-17) -- (31/3,-16);
\draw (11,-18) -- (32/3,-17);
\draw (-1,-18) -- (-2/3,-17);  
\draw (2, -9) -- (5,-9);
\draw (5,-9) -- (5,-8);
\draw (5,-8) -- (23/3,-8);
\draw[dashed] (7/3,-8) -- (5,-8);

\draw[arrows=->,line width=.4pt] (3.75,-8.5) --(4.9,-8.5);
\draw[arrows=->,line width=.4pt] (3.25,-8.5) --(7/3,-8.5);
\draw[arrows=->,line width=.4pt] (1,-5) -- (1,-8.75);
\draw[arrows=->,line width=.4pt] (1,-4) -- (1,-0.25);

{\tiny
\node at (5,-5) {$G(O_r(n))$} ;
\node at (3.5,-8.5) {$i$} ;
\node at (5,-12) {$(1.)$};
\node at (-1,-15.5) {$2k$};
\node at (-2,-16.5) {$2k+1$};
\node at (5,-15.5) {$(2.)$};
\node at (5,-16.5) {$(3.)$};
\node at (1, -4.5) {$k$};
}

\end{tikzpicture}
\caption{Set of gaps $G(O_r(n))$ within $FS_r$.}
\end{figure}
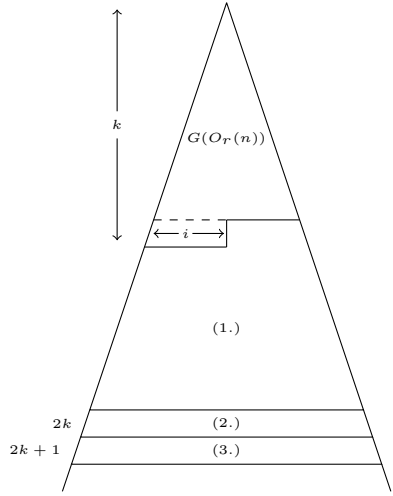

Let $i \ge 1$ be the number of words of length $k$ that belong to the set of gaps $G(O_{r}(n))$. Observe that $n=(\sum_{j=1}^{k-1} r^j)+i$ where $1 \le i \le r^k$.

We now consider, in three separate cases, which of the words $w \in O_r(n)$ are minimal generators. These cases correspond to the numbered regions in Figure 2. 
\begin{enumerate}
\item Let $|w| \le 2k-1$. Then $w \in MG(O_r(n))$ if and only if $w \in O_r(n)$ as $w$ cannot be written as a product of two words in $O_r(n)$. Hence there are $(\sum_{j=k}^{2k-1}r^j)-i$ such $w$ that are minimal generators.
\item Let $|w|=2k$. Then $w \in MG(O_r(n))$ if and only if $w=w_kw_k'$ where $|w_k|=|w_k'|=k$ and at least one of $w_k$, $w_k'$ are in $G(O_r(n))$. Being careful not to double count the cases where $w_k=w_k'$ we have that there are $\sum_{j=0}^{i-1}(2(r^k-j)-1)=2ir^k-i^2$ such $w \in O_r(n)$.
\item Let $|w| \ge 2k+1$. Clearly no word of length at least $2k+2$ is a minimal generator as every such word is a product of two words of length at least $k+1$, all of which are in $O_r(n)$. This leaves the words of length $2k+1$, which are minimal generators if and only if $w_{\text{pre}(k)} \in G(\Lambda)$ and $w_{\text{suf}(k)} \in G(\Lambda)$, with the middle letter of $w$ being any of the $r$ generators of $FS_r$. There are therefore $ri^2$ such words.
\end{enumerate}
Hence
\begin{align*}
|MG(O_r(n))|&=\left(\left(\sum_{j=k}^{2k-1}r^j\right)-i\right)+(2ir^k-i^2)+(ri^2)
\end{align*}
After some manipulation, we deduce that:
\begin{align*}
|MG(O_r(n))|&=(r-1)\left(\left(\sum_{j=1}^{k-1}r^j\right)+i\right)^2+(2r-1)\left(\left(\sum_{j=1}^{k-1}r^j\right)+i\right)+r \\
&=(r-1)n^2+(2r-1)n+r.
\end{align*}
Since every word in $O_r(n)$ is bigger than the Frobenius, every minimal generator gives rise to a descendant and the result follows.
\end{proof}

\subsection{Lower bound}

We now describe how to construct a lower bound for the sequence $a_n(FS_r)$ by constructing a subtree of the subsemigroup tree. 

Following \cite{MR2498791}, we construct a subtree of the subsemigroup tree using the ordinary subsemigroups. We describe it as folows: beginning with the single vertex representing $FS_{r}$ we attach to this root the subsemigroups of index $1$ obtained by removing each generator. We now proceed inductively. Consider a node (that represents a subsemigroup $\Lambda$):
\begin{enumerate}
\item If the subsemigroup $\Lambda$ is ordinary then attach, for every $h \in MG(\Lambda)$, a node for $\Lambda_h$.
\item If the subsemigroup $\Lambda$ is not ordinary, then it is obtained by removing minimal generators from some ordinary subsemigroup of a lower index. Add nodes for each of the minimal generators $h$ of $\Lambda$ that are bigger than the Frobenius $f(\Lambda)$ and are also minimal generators of this ordinary subsemigroup.
\end{enumerate}

This is a subtree of the subsemigroup tree that has one infinite branch consisting of ordinary subsemigroups of $FS_{r}$. The number of nodes on the $n^{\text{th}}$ level of the tree is a lower bound for $a_n(FS_r)$. Recalling the result from Lemma \ref{num-ord-desc}, if we let $p(n,r)=(r-1)n^2+(2r-1)n+r$ be the number of descendants of $O_r(n)$, then Figure 3 
illustrates the beginning of this subtree for $FS_{2}$.

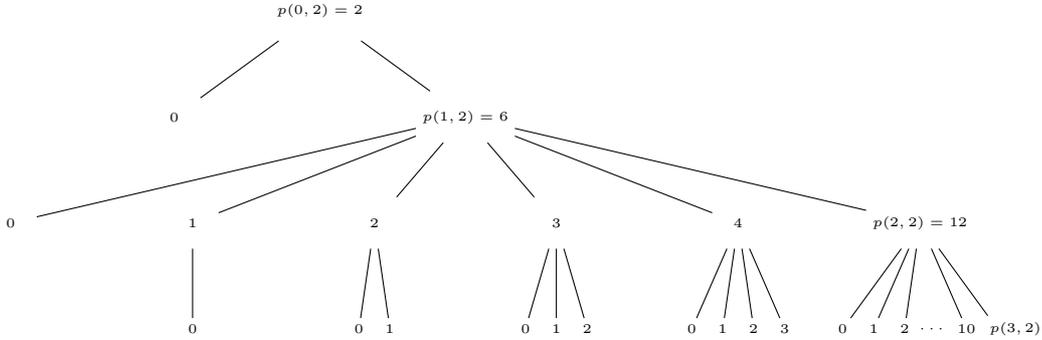
\begin{figure}[ht] \center \label{subtree}
\begin{tikzpicture}[level distance=4em]
\tikzstyle{level 1}=[sibling distance=16em]
\tikzstyle{level 2}=[sibling distance=10em]
\tikzstyle{level 3}=[sibling distance=1.7em]
\tiny
\node (2em,0) [circle] (z){$p(0,2)=2$}
child {node [rectangle, minimum size=2.8em] (a) {$0$}}
child {node [rectangle,minimum size=2.8em] (o1) {$p(1,2)=6$}
	child {node [rectangle, minimum size=2.8em] (o1a) {$0$}}
	child {node [rectangle, minimum size=2.8em] (o1b) {$1$}
		child {node {$0$}}}
	child {node [rectangle, minimum size=2.8em] (o1c) {$2$}
		child {node {$0$}}
		child {node {$1$}}}
	child {node [rectangle, minimum size=2.8em] (o1d) {$3$}
		child {node {$0$}}
		child {node {$1$}}
		child {node {$2$}}}
	child {node [rectangle, minimum size=2.8em] (o1e) {$4$}
		child {node {$0$}}
		child {node {$1$}}
		child {node {$2$}}
		child {node {$3$}}}
	child {node [rectangle, minimum size=2.8em] (o1f) {$p(2,2)=12$}
		child {node {$0$}}
		child {node {$1$}}
		child {node (o1g) {$2$}}
		child[white] {node {}}
		child {node (o1h) {$10$}}
		child {node {$\qquad p(3,2)$}}}
};
\path (o1g) -- (o1h) node (dts) [midway] {$\cdots$};
\end{tikzpicture}
\caption{Subtree of subsemigroup tree for $FS_2$}
\end{figure}

Observe that every node belongs to a branch starting from some ordinary subsemigroup $O_r(n)$, and corresponds to choosing a subset of $MG(O_r(n))$. Therefore it is clear that the number of nodes on the $n^{\text{th}}$ level of this tree is

\begin{equation*}
L(n,r):=\sum_{i=0}^{J(n,r)} {p(n-i,r)-1 \choose i}.
\end{equation*}

where
\[
J(n,r)=
\begin{cases}
\hfil n/2 & \text{for } r=1 \\
\left\lfloor{(n-1)+\frac{(2r-1)-\sqrt{(r-1)n+(2r-1))}}{r-1}}\right\rfloor & \text{for } r>1
\end{cases}
\]
where $J(n,r)$ is obtained from the inequality $p(n-i,r)-1 \ge i$. Hence we have proved the following Theorem:

\begin{thm}
$a_n(FS_r) \ge L(n,r)$ for all $r,n$.
\end{thm}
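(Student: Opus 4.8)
The plan is to read off the inequality from the subtree that has just been constructed: since $a_n(FS_r)$ is by definition the number of nodes on the $n$-th level of the full subsemigroup tree, and the constructed object is a subtree of it, the number of nodes on its $n$-th level is automatically at most $a_n(FS_r)$. So the entire content of the theorem is (i) checking that the constructed tree really does embed in the full subsemigroup tree, and (ii) counting its level-$n$ nodes and identifying the total with $L(n,r)$.

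For step (i) I would confirm that the two construction rules only ever attach edges already present in the full tree. Rule (1) attaches $\Lambda_h$ for $h \in MG(\Lambda)$ with $\Lambda$ ordinary; because all gaps of an ordinary subsemigroup precede its multiplicity, every minimal generator of $\Lambda$ exceeds $f(\Lambda)$, so each such $h$ genuinely yields a descendant. Rule (2) attaches only $\Lambda_h$ with $h > f(\Lambda)$, which is exactly the descendant relation in the full tree. Hence the constructed tree embeds and its level-$n$ count is a valid lower bound for $a_n(FS_r)$.

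The heart of the proof is the count in step (ii), which I would organize by assigning each level-$n$ node to its closest ordinary ancestor $O_r(n-i)$. The key structural fact, proved from Remark~\ref{min-gen-rem}, is that removing a minimal generator only enlarges the gap set, $G(\Lambda_h)=G(\Lambda)\cup\{h\}$, so no previously minimal generator other than $h$ can turn off. Consequently, starting from $O_r(n-i)$ one may remove any $i$ of its minimal generators, and by removing them in increasing shortlex order the Frobenius increases at each step so that every removed element stays above the current Frobenius; this makes the whole descent valid under rule (2), and the resulting subsemigroup is determined by its gap set, so distinct $i$-subsets give distinct nodes. To force $O_r(n-i)$ to be the \emph{closest} ordinary ancestor, and thereby avoid double counting against the branch of $O_r(n-i+1)$, I would exclude the multiplicity $m(O_r(n-i))$ from the removable generators: removing the multiplicity from an ordinary subsemigroup produces the next ordinary subsemigroup, whereas if the multiplicity is never removed then (removing in increasing order) some gap always lies above a non-gap, so no intermediate subsemigroup is ordinary. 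This leaves $p(n-i,r)-1$ eligible generators by Lemma~\ref{num-ord-desc}, of which we choose $i$, giving $\binom{p(n-i,r)-1}{i}$ nodes from this branch; summing over the feasible $i$, namely those with $p(n-i,r)-1 \ge i$, i.e. $i \le J(n,r)$, yields exactly $L(n,r)$.

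The main obstacle I anticipate is making the bijection in step (ii) airtight: namely, verifying that every $i$-element subset of $MG(O_r(n-i))\setminus\{m(O_r(n-i))\}$ is realizable as a descent path (the no-turn-off and Frobenius-monotonicity claims together with the requirement in rule (2) that each removed $h$ still be a minimal generator of the ambient ordinary subsemigroup), and that the closest-ordinary-ancestor bookkeeping partitions the level-$n$ nodes with neither omission nor repetition. Once Lemma~\ref{num-ord-desc} supplies $|MG(O_r(m))|=p(m,r)$, the remaining identification of the level-$n$ count with the full tree and the arithmetic producing $p(n-i,r)-1$ are routine.
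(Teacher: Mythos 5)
Your proposal is correct and takes essentially the same route as the paper: the paper's proof is precisely the observation that each level-$n$ node of the constructed subtree corresponds to choosing a subset of $MG(O_r(n-i))$ for its nearest ordinary ancestor, with the excluded multiplicity accounting for the $-1$ in $\binom{p(n-i,r)-1}{i}$. Your verifications of realizability (removal in increasing shortlex order, gap sets only growing, Frobenius monotonicity) and of the closest-ordinary-ancestor partition simply make explicit the details the paper compresses into ``it is clear.''
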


Note that $L(n,1)=F_{n+1}$ the Fibonacci numbers, which is a good lower bound for the numerical semigroup case, since $a_n(FS_1) \sim C \cdot L(n,1)$ for some constant $C$ \cite{MR3053785}. It seems reasonable to conjecture that for each $r \ge 1$, $a_n(FS_r) \sim C_r \cdot L(n,r)$ for some constant $C_r$.

\medskip

The proof in \cite{MR3053785} that $a_n(FS_1) \sim C \cdot L(n,1)$ relied on the fact that almost all numerical semigroups satisfy $f<3m$, where $m$ is the multiplicity and $f$ is the frobenius. It would be of interest to know whether this proof can be extended to the higher rank case, that is, do almost all finite index subsemigroups $\Lambda \subseteq FS_r$ satisfy $|f(\Lambda)|<3\cdot |m(\Lambda)|$?

\medskip

\begin{thm} \label{growth-below}
For $r \ge 2$, $log(n^n)=O(log(s_n(FS_r)))$.
\end{thm}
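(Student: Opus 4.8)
The plan is to derive the lower bound directly from the lower bound $L(n,r)$ established in the previous theorem, and then to isolate a single binomial term of the defining sum that is already large enough. Since $s_n(FS_r)=\sum_{i=1}^n a_i(FS_r)\ge a_n(FS_r)\ge L(n,r)$, it suffices to prove that $n\log n=O(\log L(n,r))$. The conceptual point is that, for $r\ge 2$, the quantity $p(m,r)=(r-1)m^2+(2r-1)m+r$ is genuinely \emph{quadratic} in $m$ (this is exactly where the hypothesis $r\ge2$ enters, since $p(m,1)=m+1$ is only linear), so the ``top'' of a binomial coefficient $\binom{p(n-i,r)-1}{i}$ can be made quadratically large in $n$ while we choose the ``bottom'' $i$ only linearly large. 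A single such term therefore already forces $n^n$-type growth.

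Concretely, I would take $i=\lfloor n/2\rfloor$ and first check it lies in the admissible range $0\le i\le J(n,r)$. Because
\[
p(n-i,r)-1\ge (r-1)(n-i)^2-1=(r-1)\lceil n/2\rceil^2-1,
\]
which exceeds $\lfloor n/2\rfloor$ for all large $n$, the inequality $p(n-i,r)-1\ge i$ defining $J(n,r)$ holds, so this term genuinely appears in the sum and $L(n,r)\ge\binom{p(\lceil n/2\rceil,r)-1}{\lfloor n/2\rfloor}$. Writing $N=p(\lceil n/2\rceil,r)-1=\Theta(n^2)$ and using the elementary estimate $\binom{N}{k}\ge (N/k)^k$, I obtain
\[
L(n,r)\ge\binom{N}{\lfloor n/2\rfloor}\ge\left(\frac{N}{\lfloor n/2\rfloor}\right)^{\lfloor n/2\rfloor}.
\]
Since $N/\lfloor n/2\rfloor=\Theta(n)$, taking logarithms gives $\log L(n,r)\ge\lfloor n/2\rfloor\log(N/\lfloor n/2\rfloor)\sim\tfrac12 n\log n$, whence $n\log n\le C\log L(n,r)\le C\log s_n(FS_r)$ for a suitable constant $C$ and all large $n$, which is precisely $\log(n^n)=O(\log s_n(FS_r))$.

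The argument is essentially routine once the right term is chosen, so the only mild obstacle is bookkeeping: verifying that $\lfloor n/2\rfloor\le J(n,r)$ and tracking the lower-order contributions hidden in $N=\Theta(n^2)$ and $N/\lfloor n/2\rfloor=\Theta(n)$. I would stress that the specific fraction $1/2$ plays no role --- any fixed proportion $\alpha n$ of $n$ with $0<\alpha<1$ yields the same leading behaviour $\alpha n\log n$, and hence the same $O$-estimate --- so the constant $C$ is immaterial for the claimed growth type.
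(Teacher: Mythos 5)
Your proof is correct and follows essentially the same route as the paper: both take the single term $i=n/2$ in the sum defining $L(n,r)$, use that $p(m,r)$ is quadratic in $m$ for $r\ge 2$, and apply the bound $\binom{N}{k}\ge (N/k)^k$ to get $\log s_n(FS_r)\gtrsim \tfrac{n}{2}\log n$. The only cosmetic difference is that you handle the integrality of $n/2$ with floors and an explicit check that $\lfloor n/2\rfloor\le J(n,r)$, whereas the paper sidesteps this by extending binomial coefficients via the Gamma function.
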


\begin{proof}
First note that $n/2 \le J(n,r)$ so by considering the term $i=n/2$ we have
\[
s_n(FS_r) \ge a_n(FS_r) \ge L(n,r) \ge {p(n-n/2,r)-1 \choose n/2}.
\]
Note, we can continuously extend the binomial coeffecients using Gamma functions, so we need not worry whether $n$ is even. When $r \ge 2$, $p(n,r) \ge n^2+1$ and so
\[
s_n(FS_r) \ge {(n/2)^2 \choose n/2} \ge \left(\frac{(n/2)^2}{n/2}\right)^{n/2}=(n/2)^{n/2}.
\]
Thus $log(s_n(FS_r)) \ge \frac{n}{2}(log(n)-log(2))$ and so $nlog(n)=O(log(s_n(FS_r)))$.
\end{proof}

\subsection{Upper bound}
We now construct an upper bound for the sequence $a_n(FS_r)$. In order to do this we show that, similar to the situation for numerical semigroups (see \cite{MR2498791}), ordinary subsemigroups have the maximum number of descendants. To show this we require the following Proposition:

\begin{propn} \label{add-f-rem-m}
Let $\Lambda$ be a finite index non-ordinary subsemigroup of $FS_r$, then $|MG(\Lambda)| \le |MG(\Lambda^f_m)|$.
\end{propn}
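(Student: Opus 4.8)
The plan is to factor the operation $\Lambda \mapsto \Lambda^f_m$ through the intermediate subsemigroup $\Lambda^f$ and bookkeep the minimal generators across each step. Adding $f$ to $\Lambda$ makes $f$ a new minimal generator and turns off some old ones, but creates no others (any factorisation in $\Lambda$ persists in $\Lambda^f$, so a non-generator of $\Lambda$ stays a non-generator), whence $|MG(\Lambda^f)| = |MG(\Lambda)| + 1 - t$, where $t$ counts the generators that turn off. Removing the minimal generator $m$ (which is the least element of $\Lambda^f$, hence certainly a minimal generator) gives $|MG(\Lambda^f_m)| = |MG(\Lambda^f)| - 1 + s$, where $s$ counts the generators that turn on. Combining these, $|MG(\Lambda^f_m)| = |MG(\Lambda)| + s - t$, so the whole proposition reduces to the single inequality $s \ge t$.

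Next I would pin down the two sets using Lemma \ref{min-gen-forms-lemma}. The $t$ generators that turn off are exactly the generators produced when $f$ is removed from $\Lambda^f$, so each has one of the forms $fw$, $wf$, $fwf$ with $w \in MG(\Lambda^f)$; dually, the $s$ generators turned on by removing $m$ have the forms $mw$, $wm$, $mwm$ with $w \in MG(\Lambda^f)$. This parallel structure suggests defining an injection $\phi$ from the turn-off set into the turn-on set by swapping the distinguished blocks: $fw \mapsto mw$, $wf \mapsto wm$, and $fwf \mapsto mwm$ (after fixing a canonical decomposition of each generator, so that $\phi$ is well defined and manifestly injective). Note that $\Lambda$ being non-ordinary is what guarantees $m \ne f$ and $|m| \le |f|$, which is exactly what makes these replacements sensible.

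The real content is showing $\phi$ lands in the turn-on set, i.e. that each image is a genuine new minimal generator of $\Lambda^f_m$. Membership in $\Lambda^f_m$ is the easy part: since $m \in \Lambda$ and $f$ is the largest gap, prepending or appending $m$ to an element of $\Lambda^f$ yields either a product with $m$ or a word longer than $f$, hence an element of $\Lambda$ distinct from $m$, and each image visibly factors through $m$ so it is not already a generator of $\Lambda^f$. Minimality is the crux, and I would argue by contradiction. In a factorisation of, say, $mw$ inside $\Lambda^f_m$, the left factor cannot be a prefix of $m$ nor $m$ itself, because every proper prefix of $m$ is shorter than $m$ and so a gap (as $m$ is the least element of $\Lambda$, and such a prefix is too short to be $f$, so it survives as a gap of $\Lambda^f_m$); hence the cut falls strictly inside $w$. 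Transporting it back to $fw = (fw')(w'')$ and using $fw \in MG(\Lambda)$, one factor must be a gap. Both extremal properties now do the work: the factor carrying the whole $f$-block has length exceeding $|f|$, so it exceeds the Frobenius and cannot be a gap; the remaining case forces $w''$ to be a gap lying in $\Lambda^f_m$, which pins it to $f$ itself and so forces $w$ to end in $f$.

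I expect this last situation — where the transported factor is exactly $f$, so $w$ abuts a further copy of $f$ — to be the main obstacle, since it is precisely where the naive block-swap can misbehave. The way I would dispose of it is to observe that a would-be turn-off generator $fw$ with $w$ ending in a copy of $f$ either factors in its interior (any internal cut of $f(\cdots)f$ separates two factors both longer than the Frobenius, so such a word is not a minimal generator of $\Lambda$ at all) or is genuinely of the symmetric form $fwf$, for which the map $fwf \mapsto mwm$ applies instead. In the $fwf$ case the minimality proof is cleanest: the prefix and suffix arguments for $m$ together force the cut of $mwm$ strictly inside the middle, whereupon the transported factorisation $fwf = (fw')(w''f)$ has both factors longer than $f$, contradicting $fwf \in MG(\Lambda)$. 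Choosing the canonical form so that every generator both beginning and ending with $f$ is handled by the $mwm$ branch removes the overlap between the three forms and makes $\phi$ a genuine injection, yielding $s \ge t$ and hence the proposition.
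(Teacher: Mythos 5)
Your reduction to ``turn-ons $\ge$ turn-offs'' and the idea of injecting the turn-off set into the turn-on set by swapping $f$-blocks for $m$-blocks is exactly the paper's strategy, but your injection is wrong at the symmetric case, and this is a genuine gap rather than a presentational one. You route every turned-off generator that begins and ends with $f$ through the branch $fyf \mapsto mym$, asserting that ``membership is the easy part.'' It is not: when $x=fyf$ arises from the form $fw$ or $wf$ of Lemma \ref{min-gen-forms-lemma} with $w=yf$ or $w=fy$ a minimal generator of $\Lambda^f$, the middle word $y$ is forced to be a gap of $\Lambda$ (cut $fyf$ after the first $f$-block: $f \in \Lambda^f$, so $y \in G(\Lambda^f)$). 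Then $mym$ is not a product of elements of $\Lambda^f$ in any evident way, and it need not be longer than $f$, so nothing places it in $\Lambda^f_m$. Concretely, in $FS_2$ with $a<b$ take $G(\Lambda)=\{a,aa,aaa,aaaa,ab,ba,bab\}$, so $m=b$, $f=aaaa$, and $\Lambda$ is non-ordinary. The word $x=a^9=f\,a\,f$ is a minimal generator of $\Lambda$ (every cut $a^i\cdot a^{9-i}$ has $i\le 4$ or $9-i\le4$) and is turned off by adding $f$ (since $a^9=f\cdot a^5$ with $a^5\in\Lambda$). Your map sends $x$ to $mym=bab$, which is a gap of $\Lambda^f_m$ --- not even an element of the semigroup, let alone a minimal generator. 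This is precisely why the paper's proof of Proposition \ref{add-f-rem-m} splits the symmetric case into the subcases $D_2$, $D_3$, $D_4$ according to whether $my$ and $ym$ lie in $\Lambda^f$, using the \emph{asymmetric} images $myf$ and $fym$ when one of these memberships fails (in the example above the correct image is $myf=ba^5$); only when $my,ym\in\Lambda^f$ does $mym$ work.

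A second, smaller defect: your uniform block-swap collides on the boundary words. If $fm$ and $mf$ are both turned off, your rule sends $fm\mapsto m^2$ (as $fw$ with $w=m$) and $mf\mapsto m^2$ (as $wf$ with $w=m$), and $f^2$ competes for the same targets, so ``fixing a canonical decomposition'' does not by itself make $\phi$ injective. The paper sidesteps this with the special assignments $fm\mapsto fm$ and $mf\mapsto mf$ (these words, while products $f\cdot m$ in $\Lambda^f$, become minimal generators of $\Lambda^f_m$ once $m$ is a gap) together with $f^2\mapsto m^2$. Your treatment of the remaining branches ($mw$ when $w$ has no $f$-suffix, and the genuine $fwf$ with $w\in MG(\Lambda^f)$, where $|w|=1$) matches the paper's $D_5$, $D_6$, $D_1$ arguments and is sound; the proposal would be repaired exactly by adopting the paper's finer nine-part partition and asymmetric images.
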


\begin{proof}
We prove the statement by showing that every minimal generator of $\Lambda$ turned off by adding $f$ gives rise to a unique new minimal generator of $\Lambda^f_m$ turned on by removing $m$ from $\Lambda^f$. First note that the minimal generators of $\Lambda$ that are turned off by adding $f$ are precisely the new minimal generators of $\Lambda$ turned on by removing the minimal generator $f$ from $\Lambda^f$. By Lemma \ref{min-gen-forms-lemma}, such a minimal generator $x \in D=MG(\Lambda) \setminus (MG(\Lambda^f)\setminus \lbrace f \rbrace)$ has one of three possible forms $fw, wf$ or $fwf$ where $w \in MG(\Lambda^f)$. Note that $x$ could have more than one of these forms. We now partition $D$ in to nine distinct (possibly empty) sets.
\begin{align*}
D_1:&=\{fwf \in D \mid w \in MG(\Lambda^f)\} \\
D_2:&=\{fyf \in D \mid yf \text{ or } fy \in MG(\Lambda^f), my \notin \Lambda^f\} \\
D_3:&=\{fyf \in D \mid yf \text{ or } fy \in MG(\Lambda^f), my \in \Lambda^f, ym \notin \Lambda^f\} \\
D_4:&=\{fyf \in D \mid yf \text{ or } fy \in MG(\Lambda^f), my, ym \in \Lambda^f\} \\
D_5:&=\{fw \in D \mid w \in MG(\Lambda^f), m \neq w \neq f, fw \notin D_1 \cup D_2 \cup D_3 \cup D_4\} \\
D_6:&=\{wf \in D \mid w \in MG(\Lambda^f), m \neq w \neq f, wf \notin D_1 \cup D_2 \cup D_3 \cup D_4 \cup D_5\} \\
D_7:&=\{fm \in D \mid fm \notin D_6 \} \\
D_8:&=\{mf \in D \mid mf \notin D_5 \cup D_7\} \\
D_9:&=\{f^2 \in D\}
\end{align*}
It is a straightforward matter to check that $D_1,\dots,D_9$ are indeed disjoint and that their union is $D$ (in particular  $D_2$ is disjoint from $D_1$ as $fy$ or $yf \in MG(\Lambda^f)$ implies $y \in G(\Lambda)$, so $y \notin MG(\Lambda)$). Now for each $x \in D$, we are going to assign some $x \in C=MG(\Lambda^f_m) \setminus (MG(\Lambda^f)\setminus \lbrace m \rbrace)$. All the proofs here use Remark \ref{min-gen-rem}.
\begin{enumerate}
\item Let $x=fwf \in D_1$. We intend to show that $mwm \in C$. Firstly, $|w|=1$ as otherwise $w=x_1x_2$ for some $x_1,x_2 \in FS_r$ and $x=(fx_1)(x_2f) \notin MG(\Lambda)$ as $fx_1,x_2f \in \Lambda$. Also, since $m$ and everything smaller than $m$ is in $G(\Lambda^f_m)$, we have $mwm_{\text{pre}(i)} \in G(\Lambda^f_m)$ for all $1 \le i \le |m|$ and $mwm_{\text{suf}(|mwm|-i)} \in G(\Lambda^f_m)$ for all $|m|+1=|mw| \le i \le |mwm|-1$. Now since $w,m \in \Lambda^f$ we see that $mwm \in \Lambda^f$, and $mwm \in MG(\Lambda^f_m)$ but $(mw)(m) \notin MG(\Lambda^f)$, so $mwm \in C$.
\item Let $x=fyf \in D_2$. Firstly, as before, $|y|=1$ as otherwise $fyf \notin MG(\Lambda)$. Secondly, since $my \notin \Lambda^f$, then $my \notin \Lambda^f_m$. Now, since $m$ and everything smaller than $m$ is not in $\Lambda^f_m$ we have $myf_{\text{pre}(i)} \in G(\Lambda^f_m)$ for all $1 \le i \le |m|$, and $myf_{\text{pre}(|myf|-(|m|+1))}=my \in G(\Lambda^f_m)$ and since $x \in MG(\Lambda)$ and $fy \in \Lambda$, we also have $myf_{\text{suf}(|myf|-i)} \in G(\Lambda^f_m)$ for all $|m|+2 \le i \le |myf|-1$. Therefore since $myf \in \Lambda^f_m$ we have $myf \in MG(\Lambda^f_m)$ but $(m)(yf) \notin MG(\Lambda^f)$, and so $myf \in C$.
\item Let $x=fyf \in D_3$. Similarly to the previous case, $fym \in C$.
\item Let $x=fyf \in D_4$. Again, $|y|=1$ as otherwise $fyf \notin MG(\Lambda)$. Since $m$ and everything smaller than $m$ is not in $\Lambda^f_m$, we have $mym_{\text{pre}(i)} \in G(\Lambda^f_m)$ for all $1 \le i \le |m|$ and $mym_{\text{suf}(|mym|-i)} \in G(\Lambda^f_m)$ for all $|m|+1=|my| \le i \le |mym|-1$. Since $m$ and $my \in \Lambda^f$ we see $m \neq mym \in \Lambda^f_m$ and $mym \in MG(\Lambda^f_m)$ but $(my)(m) \notin MG(\Lambda^f)$, and so $mym \in C$.
\item Let $x=fw \in D_5$. Firstly, since $fw \notin D_1 \cup D_2 \cup D_3 \cup D_4$, then $w$ does not have $f$ as a proper suffix. Secondly, since $m$ and $w \in \Lambda^f$, then $mw \notin MG(\Lambda^f)$. Now since $fw \in MG(\Lambda)$ and anything bigger than $f$ is in $\Lambda^f_m$ and $w$ does not have $f$ as a proper suffix, we have $fw_{\text{suf}(|fw|-i)} \in G(\Lambda^f_m)$ for all $|f|+1 \le i \le |fw|-1$. This implies $mw_{\text{suf}(|mw|-i)} \in G(\Lambda^f_m)$ for all $|m|+1 \le i \le |mw|-1$ and since $m$ and everything smaller than $m$ is in $G(\Lambda^f_m)$ we also have $mw_{\text{pre}(i)} \in G(\Lambda^f_m)$ for all $1 \le i \le |m|$. Finally, since $m,mw \in \Lambda^f$, then $m \neq mw \in \Lambda^f_m$ and $mw \in MG(\Lambda^f_m)$, so $mw \in C$.
\item Let $x=wf \in D_6$. Similarly to the previous case, this implies $wm \in C$.
\item Let $x=fm \in D_7$. Firstly, recall that since $fm \in MG(\Lambda)$ we have $fm \in \Lambda$ and $\{fm_{\text{pre}(i)},fm_{\text{suf}(|fm|-i}\} \cap G(\Lambda) \neq \emptyset$ for all $1 \le i \le |fm|-1$. We wish to show that $fm$ is also in $MG(\Lambda^f_m)$. It is enough to check the cases when $f$ is a prefix or suffix of $fm$. Obviously $f$ is a prefix, but in that case $m \in G(\Lambda^f_m)$. Assume $fm$ has $f$ as a suffix, that is $fm=wf$ for some $w \in FS_r$ not equal to $f$. Then $w \not\in MG(\Lambda^f)$ as $fm \not \in D_6$, but $\vert w \vert = \vert m \vert$ implies that $w \in MG(\Lambda^f)$. Hence, $fm$ does not have $f$ as a suffix. Observe also that $fm \in \Lambda^f_m$. Finally, $(f)(m) \notin MG(\Lambda^f)$ and so $fm \in C$.
\item Let $x=mf \in D_8$. Similarly to $x \in D_7$, we have $mf \in C$.
\item Let $x=f^2$. It is clear that $m^2$ is always a minimal generator of $\Lambda^f_m$ as firstly, $m \neq m^2 \in \Lambda^f_m$ and secondly, $m$ and everything smaller than $m$ is in $G(\Lambda^f_m)$ and so $m^2_{\text{pre}(i)} \in G(\Lambda^f_m)$ for all $1 \le i \le |m|$ and $m^2_{\text{suf}(|m^2|-i)} \in G(\Lambda^f_m)$ for all $|m| \le i \le |m^2|-1$. Clearly also $m^2=mm \notin MG(\Lambda^f)$ and so $m^2 \in C$.
\end{enumerate}
Hence we can construct a function
\begin{align*}
\Phi:D &\to C \\
x &\mapsto \begin{cases}
mwm &\text{ if $x=fwf \in D_1$,} \\
myf &\text{ if $x=fyf \in D_2$,} \\
fym &\text{ if $x=fyf \in D_3$,} \\
mym &\text{ if $x=fyf \in D_4$,} \\
mw &\text{ if $x=fw \in D_5$,} \\
wm &\text{ if $x=wf \in D_6$,} \\
fm &\text{ if $x=fm \in D_7$,} \\
mf &\text{ if $x=mf \in D_8$,} \\
m^2 &\text{ if $x=f^2 \in D_9$.} \\
\end{cases}
\end{align*}
Since $D$ is the disjoint union of $D_1,\dots,D_9$, this function is well-defined. Since $FS_r$ is cancellative it is clear that if $x_1,x_2 \in D_i$ for some $1 \le i \le 9$, then $\Phi(x_1)=\Phi(x_2)$ implies $x_1=x_2$. Therefore to show that $\Phi$ is injective, it is enough to show that $\Phi(x_1)$ can never equal $\Phi(x_2)$ whenever $x_1 \in D_i$, $x_2 \in D_j$ and $i \neq j$. There are 36 different cases to check, which are straightforward but tedious. We consider two of the cases and leave the rest to the reader, as they are either trivial or identical in nature to the ones presented.
\begin{enumerate}
\item Let $x_1 \in D_9$ and $x_2 \in D_1$, if $\Phi(x_1)=\Phi(x_2)$ then $m^2=mwm$ which is an immediate contradiction as $|w| \ge 1$.
\item Let $x_{1} \in D_{6}$ and $x_{2} \in D_{5}$. If $\Phi(x_{1})=\Phi(x_{2})$ then $wm=mw'$, hence $w=mz$ for some $z \in FS_r$. This implies $wf=m(zf)\notin MG(\Lambda)$ since $m,zf \in \Lambda$ which is a contradiction.
\end{enumerate}

Therefore $\Phi$ is injective and we can extend it to an injective function
\begin{align*}
\Psi:MG(\Lambda) &\to MG(\Lambda^f_m) \\
x & \mapsto \begin{cases}
\Phi(x) & \text{if } x \in D \\
x & \text{if } x \in MG(\Lambda) \cap MG(\Lambda^f), x\not = m\\
f & \text{if } x=m
\end{cases}
\end{align*}
and so $|MG(\Lambda)| \le |MG(\Lambda^f_m)|$.
\end{proof}

\begin{cor} \label{ord-max-desc}
For a fixed index, ordinary subsemigroups of $FS_r$ have the maximum number of descendants in the subsemigroup tree.
\end{cor}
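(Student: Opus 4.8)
The plan is to combine Proposition \ref{add-f-rem-m} with a descent argument that transports an arbitrary non-ordinary subsemigroup to the ordinary one of the same index while never decreasing the number of minimal generators. First I would record two elementary facts. By the definition of the subsemigroup tree, the descendants of a finite index subsemigroup $\Lambda$ correspond bijectively to the minimal generators of $\Lambda$ that exceed $f(\Lambda)$ in shortlex order, so the number of descendants of $\Lambda$ is at most $|MG(\Lambda)|$. For the ordinary subsemigroup $O_r(n)$ every element, and hence every minimal generator, exceeds the Frobenius, so its number of descendants is exactly $|MG(O_r(n))|=p(n,r)$ by Lemma \ref{num-ord-desc}. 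It therefore suffices to prove $|MG(\Lambda)| \le |MG(O_r(n))|$ for every index $n$ subsemigroup $\Lambda$.

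For a non-ordinary $\Lambda$ of index $n$, write $f=f(\Lambda)$ and $m=m(\Lambda)$; since $\Lambda$ is non-ordinary we have $m<f$, so passing to $\Lambda^f$ and then removing the minimal generator $m$ keeps the index equal to $n$ and yields a subsemigroup with gap set $G(\Lambda^f_m)=(G(\Lambda)\setminus\{f\})\cup\{m\}$. Proposition \ref{add-f-rem-m} gives $|MG(\Lambda)| \le |MG(\Lambda^f_m)|$, so it remains to iterate the operation $\Lambda \mapsto \Lambda^f_m$ until it reaches $O_r(n)$.

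The core of the argument is a monotonicity and termination claim for this operation. I would show that it strictly decreases the Frobenius: either some gap of $\Lambda$ other than $f$ already exceeds $m$, in which case the new Frobenius is the second-largest gap of $\Lambda$ and hence strictly below $f$; or every gap apart from $f$ lies below $m$, in which case $G(\Lambda^f_m)$ is exactly the set of words $\le m$, so $\Lambda^f_m$ is ordinary. Since shortlex is a well-order and $|f(\Lambda)| \le 2n-1$ by Lemma \ref{frob-bound}, a strictly decreasing chain of Frobenius values is finite, and it can only halt when the operation is no longer applicable, that is, at the ordinary subsemigroup $O_r(n)$. Chaining the inequalities from Proposition \ref{add-f-rem-m} along this chain gives $|MG(\Lambda)| \le |MG(O_r(n))| = p(n,r)$, and combining with the first paragraph shows the number of descendants of $\Lambda$ is at most $p(n,r)$, the number of descendants of $O_r(n)$.

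The main obstacle is the termination step: making precise that each iterate of $\Lambda \mapsto \Lambda^f_m$ is non-ordinary exactly when the strict Frobenius decrease is available, so that the process is well-defined until it halts, and verifying that it halts precisely at $O_r(n)$ rather than at some other configuration. The gap-set identity $G(\Lambda^f_m)=(G(\Lambda)\setminus\{f\})\cup\{m\}$ is the key bookkeeping device, since it makes both the Frobenius monotonicity and the identification of the terminal subsemigroup with $O_r(n)$ transparent.
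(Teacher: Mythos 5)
Your proof is correct and takes essentially the same approach as the paper: iterate Proposition \ref{add-f-rem-m} to pass from any non-ordinary index $n$ subsemigroup to $O_r(n)$ without decreasing the number of minimal generators, then observe that every minimal generator of an ordinary subsemigroup exceeds its Frobenius and so yields a descendant. Your explicit termination argument --- the Frobenius strictly decreases under $\Lambda \mapsto \Lambda^f_m$ because $G(\Lambda^f_m)=(G(\Lambda)\setminus\{f\})\cup\{m\}$ with $m<f$, and shortlex is a well-order --- simply makes precise the paper's tacit assertion that finitely many applications suffice.
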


\begin{proof}
Firstly note that ordinary subsemigroups have the maximum number of minimal generators. In fact, given any non-ordinary finite index subsemigroup $\Lambda \subseteq FS_r$, by Proposition \ref{add-f-rem-m}, $\Lambda^f_m$ has no less minimal generators than $\Lambda$ so apply Proposition \ref{add-f-rem-m} to $\Lambda$ finitely many times until it is ordinary. Since every minimal generator of an ordinary subsemigroup is bigger than the Frobenius, each gives rise to a descendant and the result follows.
\end{proof}

Now we can prove the following Theorem using the information from Lemma \ref{num-ord-desc} 

\begin{thm}
For $r \ge 2$, $a_n(FS_r) \le (r-1)^n(n+1)(n!)^2$.
\end{thm}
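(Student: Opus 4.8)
The plan is to exploit the subsemigroup tree together with Corollary \ref{ord-max-desc}, which says that among all index $k$ subsemigroups the ordinary one $O_r(k)$ has the most descendants, namely $p(k,r) = (r-1)k^2 + (2r-1)k + r$ by Lemma \ref{num-ord-desc}. Since every node at level $n$ of the tree is a child of a node at level $n-1$, and a node representing an index $(n-1)$ subsemigroup has at most $p(n-1,r)$ children, I would first establish the recursion
\[
a_n(FS_r) \le p(n-1,r)\, a_{n-1}(FS_r).
\]
With $a_0(FS_r) = 1$ (the root $FS_r$ itself), iterating this bound telescopes to
\[
a_n(FS_r) \le \prod_{k=0}^{n-1} p(k,r).
\]

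The remaining work is a purely algebraic estimate of this product. The key observation is that the quadratic $p(k,r)$ factors: its discriminant in $k$ is $(2r-1)^2 - 4(r-1)r = 1$, so
\[
p(k,r) = (k+1)\bigl((r-1)k + r\bigr).
\]
For $r \ge 2$ one has $(r-1)k + r \le (r-1)(k+2)$, since this is equivalent to $r \le 2r-2$. Substituting gives $p(k,r) \le (r-1)(k+1)(k+2)$, and hence
\[
\prod_{k=0}^{n-1} p(k,r) \le (r-1)^n \left(\prod_{k=0}^{n-1}(k+1)\right)\left(\prod_{k=0}^{n-1}(k+2)\right) = (r-1)^n\, n!\,(n+1)!,
\]
which equals $(r-1)^n(n+1)(n!)^2$, as desired.

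Since the substantive content---that ordinary subsemigroups maximise the descendant count---is already packaged in Corollary \ref{ord-max-desc}, I do not expect a serious obstacle here. The only point requiring care is the bookkeeping in the recursion: one must check the indexing at the root (that $FS_r$ contributes $p(0,r) = r$ index one subsemigroups, matching the $r$ ways to remove a single generator) so that the telescoping product starts at $k=0$. The factorisation of $p(k,r)$ and the inequality $(r-1)k + r \le (r-1)(k+2)$ are the two small algebraic facts that make the final product collapse cleanly to the stated form.
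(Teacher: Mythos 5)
Your proof is correct and takes essentially the same route as the paper: both bound the number of children of each level-$k$ node by $p(k,r)$ using Corollary \ref{ord-max-desc}, telescope to $a_n(FS_r)\le\prod_{k=0}^{n-1}p(k,r)$, and then estimate the product, your factorisation $p(k,r)=(k+1)\bigl((r-1)k+r\bigr)\le(r-1)(k+1)(k+2)$ being the same algebra as the paper's rewriting $p(k,r)=(r-1)(k+1)^2+(k+1)$ followed by the bound $(r-1)k^2+k\le(r-1)k^2+(r-1)k$. The only differences are cosmetic; your version makes the recursion $a_n(FS_r)\le p(n-1,r)\,a_{n-1}(FS_r)$ explicit where the paper leaves it implicit (and, incidentally, avoids the paper's typographical slip of writing $\ge$ where $\le$ is meant in the displayed chain).
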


\begin{proof}
By Corollary \ref{ord-max-desc}, the ordinary subsemigroups have the maximum number of descendants, which by Lemma \ref{num-ord-desc}, is $(r-1)n^2+(2r-1)n+r$. So let us assume every subsemigroup has this number of descendants to construct an upper bound. Then
\begin{align*}
a_n(FS_r)&\ge\prod_{k=0}^{n-1}\left((r-1)k^2+(2r-1)k+r)\right) \\
&=\prod_{k=0}^{n-1}\left((r-1)(k+1)^2+(k+1)\right) \\
&=\prod_{k=1}^n\left((r-1)k^2+k\right) \le \prod_{k=1}^n\left((r-1)k^2+(r-1)k\right) \\
&=(r-1)^n\prod_{k=1}^n(k(k+1)) \\
&=(r-1)^n(n+1)(n!)^2. \qedhere
\end{align*}
\end{proof}

\begin{thm} \label{growth-above}
For $r \ge 2$, $log(s_n(FS_r))=O(log(n^n))$.
\end{thm}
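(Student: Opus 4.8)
The plan is to bound the partial sum $s_n(FS_r)$ directly using the explicit upper bound on $a_n(FS_r)$ established in the preceding theorem, and then pass to logarithms. First I would observe that the bound $(r-1)^k(k+1)(k!)^2$ is monotonically increasing in $k$, so that summing over $k$ from $1$ to $n$ costs at most a factor of $n$:
\[
s_n(FS_r)=\sum_{k=1}^n a_k(FS_r) \le \sum_{k=1}^n (r-1)^k(k+1)(k!)^2 \le n(r-1)^n(n+1)(n!)^2.
\]
Taking $log$ of both sides then yields
\[
log(s_n(FS_r)) \le log(n)+n\,log(r-1)+log(n+1)+2\,log(n!).
\]

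The key step is to control the term $2\,log(n!)$. By Stirling's approximation, $log(n!)=n\,log(n)-n\,log(e)+O(log(n))$, and hence $log(n!)=O(n\,log(n))$. Every remaining term is dominated by this: $log(n)$ and $log(n+1)$ are $O(n\,log(n))$, and since $r$ is a fixed constant, $n\,log(r-1)$ is $O(n)=O(n\,log(n))$. It follows that
\[
log(s_n(FS_r))=O(n\,log(n))=O(log(n^n)),
\]
which is exactly the required statement.

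There is essentially no obstacle in this argument, since it is a direct consequence of the previous theorem together with Stirling's formula; the only point requiring any attention is to check that the factor $(r-1)^n$ arising from the rank does not spoil the $n^n$ bound. As its logarithm is only linear in $n$, it is safely absorbed into the $O(n\,log(n))$ estimate. Combined with Theorem \ref{growth-below}, this establishes $log(s_n(FS_r)) \asymp log(n^n)$, so that $FS_r$ has subsemigroup growth of strict type $n^n$ for $r\ge 2$.
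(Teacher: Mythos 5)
Your proposal is correct and follows essentially the same route as the paper: bound $a_k(FS_r)$ by the non-decreasing quantity $(r-1)^k(k+1)(k!)^2$, absorb the sum into a factor of $n$, and take logarithms. The only cosmetic difference is that you invoke Stirling's approximation to get $\log(n!)=O(n\log n)$, whereas the paper uses the even simpler bound $n!<n^n$; both settle the key term identically.
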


\begin{proof}
Let $U(n,r)=(r-1)^n(n+1)(n!)^2$. Since $a_n(FS_r) \le U(n,r)$ and since $U(n,r)$ is non-decreasing, we have $s_n(FS_r) \le n \cdot U(n,r)$. Since $n!<n^n$, we have
\[
log(s_n(FS_r)) \le log(n)+nlog(r-1)+log(n+1)+2nlog(n)=O(nlog(n)). \qedhere
\]
\end{proof}

\begin{cor} \label{subsgp-growth-strict-type}
For $r \ge 2$, $FS_r$ has subsemigroup growth of strict type $n^n$.
\end{cor}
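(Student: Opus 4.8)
The plan is to read off the corollary directly from the two bracketing theorems proved immediately above. By the definition of strict type, the claim is that $\log(s_n(FS_r)) \asymp \log(n^n)$, and unwinding the definition of $\asymp$, this amounts to verifying the two one-sided bounds $\log(n^n) = O(\log(s_n(FS_r)))$ and $\log(s_n(FS_r)) = O(\log(n^n))$. Since $\log(n^n) = n\log(n)$, both bounds are already in exactly the shape in which the preceding theorems deliver them, so no further estimation is needed.

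First I would invoke Theorem \ref{growth-below}, which for $r \ge 2$ gives $\log(n^n) = O(\log(s_n(FS_r)))$; this is the lower half, obtained from the single binomial term $\binom{(n/2)^2}{n/2}$ of the lower-bound tree. Next I would invoke Theorem \ref{growth-above}, which for $r \ge 2$ gives $\log(s_n(FS_r)) = O(\log(n^n))$; this is the upper half, obtained from the product estimate $U(n,r) = (r-1)^n(n+1)(n!)^2$ together with $n! < n^n$. Conjoining the two bounds yields $\log(s_n(FS_r)) \asymp \log(n^n)$, which is precisely the assertion that $FS_r$ has subsemigroup growth of strict type $n^n$.

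The main obstacle is essentially nonexistent: all of the genuine work has already been done in Theorems \ref{growth-below} and \ref{growth-above}, and the corollary is pure definitional assembly. The only points requiring a moment's care are that $\log$ is base $2$ throughout and that $\log(n^n) = n\log(n)$, so that the two theorems, stated respectively as $\log(n^n) = O(\log(s_n(FS_r)))$ and $\log(s_n(FS_r)) = O(\log(n^n))$, line up verbatim with the two halves of the $\asymp$ relation demanded by the definition of strict type.
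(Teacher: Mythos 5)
Your proposal is correct and matches the paper's proof exactly: the paper also deduces the corollary immediately by combining Theorem \ref{growth-below} and Theorem \ref{growth-above} with the definition of strict growth type. Your unwinding of $\asymp$ into the two one-sided $O$-bounds is precisely the definitional assembly the paper intends.
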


\begin{proof}
By Theorems \ref{growth-below} and \ref{growth-above}.
\end{proof}

\section{Subsemigroup growth for a fixed index}\label{sect:poly}
In this section we consider the growth of $a_n(FS_r)$ when $n$ is fixed and the rank $r$ varies. In this case the sequence fits an explicit polynomial. In order to prove this, we need some preliminary remarks. 

Given any word $w=g_{\alpha(1)}\dots g_{\alpha(m)} \in FS_r$, with $g_{\alpha(i)} \in X_r$, and any permutation $\sigma \in \text{Sym}(X_r)$, let $\sigma(w)$ denote the word $\sigma(g_{\alpha(1)})\dots\sigma(g_{\alpha(m)})$. Given $\Lambda$ a finite index subsemigroup of $FS_r$ with $G(\Lambda)=\{w_1,\dots,w_n\}$, then let $\sigma(G(\Lambda))$ denote the set $\{\sigma(w_1),\sigma(w_2),\dots,\sigma(w_n)\}$. It is clear that $FS_r \setminus \sigma(G(\Lambda))$ is also a finite index subsemgroup of $FS_r$ isomorphic to $\Lambda$.

For any index $n$ subsemigroup $\Lambda \subseteq FS_r$ we wish to think of the set of gaps of $\Lambda$ as a `pattern' by forgetting the labels of the generators of $FS_r$. To make this idea precise, let $Fin$ be the set of all finite index subsemigroups of finite rank free semigroups.

We now define an equivalence relation $\sim$ on $Fin$. Let $\Lambda_1, \Lambda_2 \in Fin$ be two finite index subsemigroups with $\Lambda_1 \subseteq FS_p$ and $\Lambda_2 \subseteq FS_q$ say. We say $\Lambda_1 \sim \Lambda_2$ if there exists $\sigma \in \text{Sym}(X_{\max\{p,q\}})$ such that $G(\Lambda_1)=\sigma(G(\Lambda_2))$. If $\Lambda_1 \sim \Lambda_2$ then we say that $\Lambda_1$ and $\Lambda_2$ have the same \textbf{gap pattern}. 

Let $w \in FS_r$. Define the {\bf support} of $w$ to be the set of minimal generators that make up $w$. That is, if $w=g_{\alpha(1)}g_{\alpha(2)}\dots g_{\alpha(m)}$ where $g_{\alpha(i)} \in X_r$, then supp$(w):=\{g_{\alpha(1)},\dots,g_{\alpha(m)}\}$. Given $\Lambda \in Fin$, with $G(\Lambda)=\{w_1,w_2,\dots,w_n\}$, we define supp$(G(\Lambda))$ to be $\bigcup_{i=1}^n\text{supp}(w_i)$. 

Let $\Lambda \in Fin$ with $\vert \text{supp}(G(\Lambda)) \vert=k$. Then $\Lambda$ has a minimal representative $\Lambda_{min} \subseteq FS_k$ of the $\sim$-class of $\Lambda$ so that $\text{supp}(\Lambda_{min})=X_k$ and the elements of the support `first appear in order'. More formally, let $g_j=\max\{g_i \mid g_i \in \text{supp}(\Lambda)\}$ be the largest generator in the gaps of $\Lambda$ using the standard lexicographical order. If we let $y=w_1\dots w_n$ be the concatenation of the set of gaps of $\Lambda$, then we can totally order the $\sigma \in \text{Sym}(X_j)$ using the lexicographcial order on $\sigma(y)$. Let $\sigma_{min}$ be the smallest permutation with respect to this total order, then $\Lambda_{min}$ is the complement of $\sigma_{min}(G(\Lambda))$ in $FS_k$.

Now, for a fixed $\Lambda \in Fin$ let the {\bf orbit} of $\Lambda$ be defined as Orb$(\Lambda):=\{\sigma(\Lambda) \mid \sigma \in \text{Sym$($supp}(\Lambda))\}$. This is the set of all elements of $Fin$ with the same gap pattern and the same support as $\Lambda$. By the orbit-stabilizer theorem, $|\text{Orb}(\Lambda)|$ is a divisor of $|\text{Sym}(\text{supp}(\Lambda))|=|\text{supp}(\Lambda)|!$.

\smallskip

For each $n \ge 1$, let $Z(n)$ be the set of minimal representatives of $\sim$-classes of index $n$ subsemigroups. For each $k \ge 1$, let $Z(n,k):=\{P \in Z(n) \mid |\text{supp}(P)|=k\}$ and for each $i \mid k!$, let $Z(n,k,i):=\{P \in Z(n,k) \mid |\text{Orb}(P)|=i\}$. Note, we have that
\begin{equation}\label{eqn-1}
Z(n,k)=\bigsqcup_{i \mid k!}Z(n,k,i).
\end{equation}
Observe that each of the sets $Z(n,k)$ are finite. In fact, as each $P$ is minimal $|Z(n,k)|$ is no bigger than the number of index $n$ subsemigroups of $FS_k$, which by Corollary \ref{finitely-many-index-n}, is finite.

\smallskip

Now given any $r$ and $n$, we wish to determine the number of index $n$ subsemigroups of $FS_r$. For each $P \in Z(n,k)$ there are $k$ possible generators we could choose from $r$ for the support of $P$, and there are $|\text{Orb}(P)|$ different subsemigroups of $FS_r$ with the same gap pattern and the same support as $P$. Therefore, there are $|\text{Orb}(P)|\cdot\binom{r}{k}$ index $n$ subsemigroups of $FS_r$ with the same gap pattern as $P$, and we have the following equation:

\begin{align*}
a_n(FS_r)&=\sum_{k=1}^r \sum_{P \in Z(n,k)}|\text{Orb}(P)|\cdot\binom{r}{k}.
\end{align*}

\;

\begin{lemma} \label{no-z-n-n+}
$Z(n,k)=\emptyset$ for all $k >n$.
\end{lemma}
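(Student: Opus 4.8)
The plan is to prove the equivalent quantitative statement that every index $n$ subsemigroup $\Lambda \subseteq FS_r$ satisfies $|\text{supp}(G(\Lambda))| \le n$. Since each $P \in Z(n,k)$ is itself an index $n$ subsemigroup with $|\text{supp}(P)| = k$, this bound forces $Z(n,k) = \emptyset$ whenever $k > n$. To control the support I would construct an injection $\psi$ from $\text{supp}(G(\Lambda))$ into the gap set $G(\Lambda)$, which has exactly $n$ elements; the inequality then follows immediately.

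First I would define $\psi$. For each $g \in \text{supp}(G(\Lambda))$ there is at least one gap whose support contains $g$, and since the shortlex order restricted to the finite set $G(\Lambda)$ is a well-order, I can let $\psi(g)$ be the shortlex-minimal such gap. This makes $\psi : \text{supp}(G(\Lambda)) \to G(\Lambda)$ well-defined, with the crucial feature that no proper, strictly shorter factor of $\psi(g)$ can be a gap containing $g$.

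The heart of the argument is injectivity. Suppose $\psi(g) = \psi(g') = w$ with $g \neq g'$, and write $w = a_1 \cdots a_m$ with each $a_i \in X_r$. Both $g$ and $g'$ occur in $w$; fixing one occurrence of each and relabelling, I may assume they sit at positions $j < j'$. I would then cut $w$ at position $j$, writing $w = (a_1 \cdots a_j)(a_{j+1} \cdots a_m)$ as a product of two proper (hence strictly shorter) factors. Because $w$ is a gap while $\Lambda$ is a subsemigroup, at least one of these factors must itself be a gap, since otherwise their product $w$ would lie in $\Lambda$. But the prefix $a_1 \cdots a_j$ contains $g$ and the suffix $a_{j+1} \cdots a_m$ contains $g'$, each being strictly shorter than $w$; whichever factor is a gap therefore contradicts the shortlex-minimality of $w$ as the smallest gap containing $g$ (respectively $g'$). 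Hence no collision occurs and $\psi$ is injective, giving $|\text{supp}(G(\Lambda))| \le |G(\Lambda)| = n$.

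I expect the injectivity step to be the only delicate point: one must cut $w$ precisely between the two chosen occurrences so that the forced gap factor is simultaneously guaranteed to contain the relevant generator and to be strictly shorter than $w$, which is exactly what contradicts minimality. The choice $i = j$ (the position of the left occurrence) is what makes both horns of the dichotomy work. The reduction to the support bound and the conclusion $Z(n,k) = \emptyset$ for $k > n$ are then routine.
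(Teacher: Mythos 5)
Your proposal is correct, but it reaches the support bound by a genuinely different route from the paper. The paper proves the same reduction (every index $n$ subsemigroup $\Lambda$ has $|\text{supp}(G(\Lambda))| \le n$) by induction on the index: given $\Lambda$ of index $n+1$, it adjoins the Frobenius to pass to $\Lambda^f$ of index $n$, then invokes its classification of minimal generators (Lemma \ref{lemma-min-gens-from-gaps}) to see that $f$ has one of the forms $w_i g$, $g w_i$ or $w_i g w_j$ with $w_i, w_j \in G(\Lambda^f)$ and $g \in X_r$, so each new gap contributes at most one new letter to the support. You instead construct a direct injection $\psi:\text{supp}(G(\Lambda)) \to G(\Lambda)$ sending each letter to the shortlex-minimal gap containing it, and your injectivity argument is sound: cutting a putative common value $w$ immediately after the left occurrence produces two nonempty strictly shorter factors, the subsemigroup property forces one of them to be a gap, and since shorter words are shortlex-smaller, whichever factor is a gap contradicts the minimality of $w$ for the letter it contains (the prefix contains $g$, the suffix contains $g'$, and the symmetry of the roles justifies the relabelling). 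Well-definedness of $\psi$ is also fine, as $G(\Lambda)$ is finite and shortlex is a total order. What each approach buys: your injection is self-contained, using only the subsemigroup property and the shortlex order, so it could be stated right after the preliminaries with no supporting machinery; the paper's induction is instead a short corollary of apparatus it has already built (the $\Lambda^f$ construction and the minimal-generator forms, which also underpin Algorithm \ref{alg:findmingen} and the subsemigroup tree), and it records the additional structural fact that the support grows by at most one letter each time the index increases by one, which fits the tree-based viewpoint running through the paper.
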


\begin{proof}
This is equivalent to proving that every index $n$ subsemigroup $\Lambda$ has $|\text{supp}(G(\Lambda))|\le n$. We prove by induction on the index. Let $n=1$ and given any index $1$ subsemigroup $\Lambda$ with $G(\Lambda)=\{w_1\}$, then $w_1$ must be in $X_r$ and so $|\text{supp}(\Lambda)|=1$. Now assume the statement is true for $n \ge 1$. Given any subsemigroup with $|G(\Lambda)|=n+1$, let $f=f(\Lambda)$ and consider the semigroup $\Lambda^f$ which has $|G(\Lambda^f)|=n$. If $G(\Lambda^f)=\{w_1,\dots,w_n\}$ then by the assumption $|\text{supp}(G(\Lambda^f))| \le n$. Since $f$ is a minimal generator of $\Lambda^f$, by Lemma \ref{lemma-min-gens-from-gaps}, $f$ is of the form $w_ig$, $gw_i$ or $w_igw_j$ for some $w_i,w_j \in G(\Lambda^f)$, $g \in X_r$ and therefore supp$(G(\Lambda^f))=\text{supp}(G(\Lambda)) \cup \text{supp}(g)$ and $|\text{supp}(\Lambda)| \le n+1$.
\end{proof}

Therefore we can refine our range of summation slightly to get:
\begin{align*}
a_n(FS_r)&=\sum_{k=1}^n \sum_{P \in Z(n,k)}|\text{Orb}(P)|\cdot\binom{r}{k} \\
&=\sum_{k=1}^n\sum_{i \mid k!}\sum_{P \in Z(n,k,i)} i\ \binom{r}{k} \mbox{\qquad by (\ref{eqn-1})}\\
&=\sum_{k=1}^n\sum_{i \mid k!}|Z(n,k,i)|\cdot i \ \binom{r}{k} \\
&=\sum_{k=1}^n\sum_{i \mid k!}|Z(n,k,i)|\frac{i}{k!} \prod_{j=0}^{k-1}(r-j)
\end{align*}
So if we let
\[
c(n,k):=\sum_{i \mid k!}|Z(n,k,i)|\cdot i
\]
and $s(n,k)$ be the (signed) Stirling numbers of the first kind, then
\begin{align*}
a_n(FS_r)&=\sum_{k=1}^n\frac{c(n,k)}{k!}\sum_{j=0}^ns(n,j)r^j \\
&=\sum_{k=1}^n\left(\left(\sum_{j=k}^n\frac{c(n,j)}{j!}s(n,k)\right) r^k \right)
\end{align*}
We remark that the ordinary subsemigroup $O_n(n)$ has $|\text{supp}(O_n(n))|=n$ and so $c(n,n) \neq 0$. Therefore we have proved:

\begin{thm} \label{poly-conj}
$a_n(FS_r)$ is a polynomial in $r$ of degree $n$ with no constant term.
\end{thm}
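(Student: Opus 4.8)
The plan is to read the conclusion directly off the counting formula already assembled in the text, namely
\[
a_n(FS_r)=\sum_{k=1}^n\frac{c(n,k)}{k!}\prod_{j=0}^{k-1}(r-j),
\]
and then extract the degree and the vanishing constant term by analysing each falling factorial separately. First I would record that $\prod_{j=0}^{k-1}(r-j)=r(r-1)\cdots(r-k+1)$ is, for each fixed $k\ge 1$, a polynomial in $r$ of degree exactly $k$, whose expansion in terms of the signed Stirling numbers of the first kind is $\sum_{j=0}^{k}s(k,j)\,r^j$. Since $a_n(FS_r)$ is a fixed rational linear combination of these falling factorials for $1\le k\le n$, it is immediately a polynomial in $r$; swapping the order of summation to collect powers of $r$ makes the coefficient of each $r^j$ explicit.

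For the degree claim I would argue that the power $r^n$ can only arise from the summand $k=n$, because the summand indexed by $k$ contributes a polynomial of degree $k\le n$. The leading term of $\prod_{j=0}^{n-1}(r-j)$ is $r^n$ with coefficient $1$, so the coefficient of $r^n$ in $a_n(FS_r)$ is exactly $c(n,n)/n!$. It therefore remains only to check that this leading coefficient does not vanish, i.e.\ that $c(n,n)\neq 0$. This is where the single genuinely new observation enters: the ordinary subsemigroup $O_n(n)\subseteq FS_n$ has $|\text{supp}(O_n(n))|=n$, so its $\sim$-class furnishes at least one element of $Z(n,n)$; since $c(n,n)=\sum_{i\mid n!}|Z(n,n,i)|\cdot i$ is a sum of nonnegative terms with at least one strictly positive, we conclude $c(n,n)>0$ and the degree is exactly $n$.

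For the absence of a constant term I would observe that every falling factorial $\prod_{j=0}^{k-1}(r-j)$ with $k\ge 1$ contains the factor $(r-0)=r$, hence is divisible by $r$ and has zero constant term; the whole sum is therefore divisible by $r$, so $a_n(FS_r)$ has no constant term.

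I do not expect any real obstacle at this final step: given the preceding material the statement is essentially a bookkeeping exercise. The substantive work has already been done upstream, in the orbit-stabilizer count that produced the formula and, crucially, in Lemma \ref{no-z-n-n+}, whose bound $Z(n,k)=\emptyset$ for $k>n$ is exactly what forces the degree to be $n$ rather than unbounded. If anything here is worth flagging as delicate, it is the nonvanishing of the leading coefficient $c(n,n)$, since that is the one assertion that is not pure manipulation and that relies on exhibiting a concrete subsemigroup of full support.
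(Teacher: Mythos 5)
Your proposal is correct and follows essentially the same route as the paper: it starts from the same orbit-counting formula $a_n(FS_r)=\sum_{k=1}^n\frac{c(n,k)}{k!}\prod_{j=0}^{k-1}(r-j)$, invokes Lemma \ref{no-z-n-n+} to cap the degree at $n$, uses the ordinary subsemigroup $O_n(n)$ with full support to get $c(n,n)\neq 0$, and obtains the vanishing constant term from the factor $r$ in each falling factorial (which the paper does implicitly via the Stirling expansion starting at $r^1$). You correctly identified the one non-formal step, the nonvanishing of $c(n,n)$, which is exactly the remark the paper makes before stating the theorem.
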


We now use this result to describe an algorithm (making use of the previous algorithms) for inductively computing the sets $Z(n,k,i)$ and therefore the values $a_n(FS_r)$. See Algorithm \ref{alg:polynomial}.

\begin{algorithm} \caption{Find the sets $Z(n,k,i)$ for all $i$, from the sets $Z(n-1,k,i)$ and $Z(n-1,k-1,i)$}\label{alg:polynomial}
\begin{algorithmic}[1]
\Require{Index $n$, support $k$, the sets $Z(n-1,k,i)$ and $Z(n-1,k-1,i)$ for all $i$}
\Procedure{FindNextSets}{$n,k$}
\State Input sets $Z(n-1,k,i)$, $Z(n-1,k-1,i)$ for all $i$.
\State $Z(n-1,k) \gets \cup_i \, Z(n-1,k,i)$
\State $Z(n-1,k-1) \gets \cup_i \, Z(n-1,k-1,i)$
\State Descendants $\gets \{\}$
\ForAll{$\Lambda \in Z(n-1,k)$}
\State $M \gets $\textsc{Generators}$(G(\Lambda),X_k)$
\ForAll{$h \in M$}
\If{$h>f(\Lambda)$}
\State Descendants $\gets$ Descendants $\cup \, \{G(\Lambda) \cup \{h\}\}$
\EndIf
\EndFor
\EndFor
\ForAll{$G(\Lambda) \in Z(n-1,k-1)$}
\State $M \gets $\textsc{Generators}$(G(\Lambda),\{g_k\})$
\ForAll{$h \in M$}
\If{$h>f(\Lambda)$}
\State Descendants $\gets$ Descendants $\cup \, \{G(\Lambda) \cup \{h\}\}$
\EndIf
\EndFor
\EndFor
\ForAll{$G(\Lambda) \in $ Descendants}
\State Orbit $\gets \{\}$
\ForAll{$\sigma \in $ Sym$(X_k)$}
\State Orbit $\gets$ Orbit $\cup \, \{\sigma(\Lambda)\}$
\EndFor
\State MinRep $\gets$ minimal representative of Orbit
\State $Z(n,k,|$Orbit$|) \gets Z(n,k,|$Orbit$|) \cup\{$MinRep$\}$
\EndFor
\State Output sets $Z(n,k,i)$ for all $i$.
\EndProcedure
\end{algorithmic}
\end{algorithm}

Using this algorithm, we computed the polynomials and hence the values of $a_n(FS_r)$ for $1 \le n \le 9$ which are presented in Appendix \ref{app:subsemigroup-values}. This was implemented on the Iridis 4 compute cluster \cite{iridis} using C++ code which is available for download \cite{code}. It took 2 hours 30 minutes running on 64 x Intel Xeon E5-2670 processor cores, equivalent to approximately one week of computation on a standard desktop computer.

\section{Ideal growth}

Recall that a subsemigroup $I$ of a semigroup $S$ is called a left (resp. right) ideal of $S$ if $SI \subseteq I$ (resp. $IS \subseteq I$), and a (two-sided) ideal if it is both a left ideal and a right ideal. Let $a_n^{LI}(FS_r)$ denote the number of (Rees) index $n$ left ideals of $FS_r$, let $a_n^{RI}(FS_r)$ denote the number of index $n$ right ideals and $a_n^I(FS_r)$ denote the number of index $n$ two-sided ideals. Note that the number of left ideals is equal to the number of right ideals as $FS_r \to FS_r, w \mapsto rev(w)$ is an anti-isomorphism, and since every ideal is also a subsemigroup it is clear that
\[
a_n^I(FS_r) \le a_n^{LI}(FS_r)=a_n^{RI}(FS_r) \le a_n(FS_r).
\]
We show that both $a_n^{LI}(FS_r)=a_n^{RI}(FS_r)$ and $a_n^I(FS_r)$ have exponential growth, that is, strict growth type $2^n=n^{n/log(n)}$ where strict growth type is defined as in subsemigroup growth.

\subsection{One-sided ideals}

We now make an observation and give an exact formula for the number of index $n$ one-sided ideals of $FS_r$. 

Let $\Lambda$ be an index $n$ right ideal of $FS_r$, then it is clear that given any $w \in G(\Lambda)$, we must also have $w_{\text{pre}(|w|-1)} \in G(\Lambda)$. Therefore considering the right multiplication tree, the set of gaps (including the empty word) corresponds to a rooted $r$-ary tree with $n+1$ vertices. The number of such is precisely the `Fuss-Catalan' numbers:

\[
a_n^{RI}(FS_r)=\frac{1}{(r-1)(n+1)+1}\binom{r(n+1)}{n+1}.
\]

Note that when $r=2$ this reduces to the standard Catalan numbers and when $r=1$, $a_n^{RI}(FS_r)=a_n^{LI}(FS_r)=a_n^{I}(FS_r)=1$ as the only ideals of $(\mathbb{N},+)$ are the ordinary subsemigroups.

\begin{thm} \label{one-sided-growth}
For $r \ge 2$, $FS_r$ has one-sided ideal growth of strict type $2^n=n^{n/log(n)}$.
\end{thm}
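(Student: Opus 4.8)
The plan is to work directly from the exact formula just established, namely that $a_n^{RI}(FS_r)$ equals the Fuss--Catalan number $\frac{1}{(r-1)(n+1)+1}\binom{r(n+1)}{n+1}$, and to reduce everything to showing $log(a_n^{RI}(FS_r)) \asymp n$. Recall that strict type $2^n$ means $log(s_n^{RI}(FS_r)) \asymp log(2^n)=n$, where $s_n^{RI}(FS_r)=\sum_{i=1}^n a_i^{RI}(FS_r)$ is the partial sum (the identity $2^n=n^{n/log(n)}$ is immediate on taking logs). The partial sum is squeezed between $a_n^{RI}(FS_r)$ and $n\cdot\max_{i\le n}a_i^{RI}(FS_r)$, so once $log(a_n^{RI}(FS_r))\asymp n$ is in hand, together with a uniform exponential bound on the individual terms, the conclusion for $s_n^{RI}(FS_r)$ follows immediately; and since $a_n^{LI}(FS_r)=a_n^{RI}(FS_r)$, the left-ideal case is identical.

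The only computation needed is to sandwich the binomial coefficient between two clean exponentials. Writing $m=n+1$, I would use the elementary bound $\binom{rm}{m}\ge (rm/m)^m=r^m$ on the one hand, and $\binom{rm}{m}\le 2^{rm}$ on the other (it is a single entry of the $rm$-th row of Pascal's triangle, which sums to $2^{rm}$). Discarding the harmless prefactor $\frac{1}{(r-1)m+1}\le 1$ for the upper bound and retaining it for the lower bound gives
\[
\frac{r^{\,n+1}}{(r-1)(n+1)+1}\le a_n^{RI}(FS_r)\le 2^{\,r(n+1)}.
\]

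Finally I would take base-$2$ logarithms. The upper bound yields $log(a_n^{RI}(FS_r))\le r(n+1)=O(n)$, and since every term $a_i^{RI}(FS_r)$ with $i\le n$ is at most $2^{r(n+1)}$ we get $s_n^{RI}(FS_r)\le n\cdot 2^{r(n+1)}$ and hence $log(s_n^{RI}(FS_r))=O(n)$. For the matching direction, the crucial point is that $r\ge 2$ forces $log\,r\ge 1>0$, so the lower bound gives $log(a_n^{RI}(FS_r))\ge (n+1)log\,r-log((r-1)(n+1)+1)$, whose subtracted term is only $O(log\,n)$; thus $log(a_n^{RI}(FS_r))\ge cn$ for some $c>0$ and all large $n$, and $log(s_n^{RI}(FS_r))\ge log(a_n^{RI}(FS_r))$ then gives $n=O(log(s_n^{RI}(FS_r)))$. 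Combining the two directions yields $log(s_n^{RI}(FS_r))\asymp n$, i.e. strict type $2^n$. I do not expect a genuine obstacle here: the only place requiring care is keeping the crude binomial estimates matched so that both logarithmic bounds come out linear in $n$, and observing that the polynomial prefactor and the passage from $a_n^{RI}(FS_r)$ to the partial sum $s_n^{RI}(FS_r)$ contribute only lower-order $O(log\,n)$ terms that the relation $\asymp$ absorbs.
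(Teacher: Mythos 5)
Your proposal is correct and follows essentially the same route as the paper, whose entire proof is the sandwich $r^{n+1} \le \binom{r(n+1)}{n+1} \le (er)^{n+1}$ applied to the exact Fuss--Catalan formula; your only deviation is using the cruder upper bound $2^{r(n+1)}$ in place of $(er)^{n+1}$, which makes no difference after taking logarithms. You also spell out the bookkeeping the paper leaves implicit (the prefactor contributing only $O(log\,n)$, the passage from $a_n^{RI}$ to the partial sums $s_n^{RI}$, and the role of $r \ge 2$ in making the lower bound linear in $n$), all of which is accurate.
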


\begin{proof}
This follows from the fact that $r^{n+1} \le \binom{r(n+1)}{(n+1)} \le (er)^{n+1}$. 
\end{proof}

For a fixed index $n$, with some basic manipulation the formula above yields a polynomial in $r$ of degree $n$ with no constant term:

\[
a_n^{RI}(FS_r)=\sum_{k=0}^n\left(\left(\frac{s(n+1,k)}{(n+1)!}(n+1)^k\right)r^j\right)
\]
where $s(n,k)$ are the (signed) Stirling numbers of the first kind.
\subsection{Two-sided ideals}

Recalling the subsemigroup tree in Section \ref{sect:uandl} we consider the subtree of all (two-sided) ideals. That this really is a tree follows from the fact that given any ideal $\Lambda \subseteq FS_r$, then $\Lambda^f$ is also an ideal.

\begin{rem} \label{ideal-desc-condition}
Note that given any ideal $\Lambda \subseteq FS_r$ in this subtree, then $\Lambda_h$ is a descendant of $\Lambda$ if and only if $h > f(\Lambda)$ and $h_{\text{pre}(|h|-1)},h_{\text{suf}(|h|-1)} \in G(\Lambda)$.
\end{rem}

The ordinary subsemigroups $O_r(n)$ are clearly ideals. Let $\text{desc}(O_r(n))$ denote the set of ideals that are descendants of $O_r(n)$ in this tree, and let $D(O_r(n))=\{h \in MG(O_r(n)) \mid O_r(n)_h \in \text{desc}(O_r(n))\}$. Clearly the sets $D(O_r(n))$ and $\text{desc}(O_r(n))$ are in bijection. We now construct a lower bound on the size of these sets. First we need a technical lemma.

\begin{lemma} \label{new-desc}
$g_1w_{\text{pre}(|w|-1)} \le w$ for all $w \in FS_r$.
\end{lemma}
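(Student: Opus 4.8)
The plan is to prove the inequality $g_1 w_{\text{pre}(|w|-1)} \le w$ directly by comparing the two words in the shortlex order, treating the trivial case $|w|=1$ separately. When $|w|=1$ the prefix $w_{\text{pre}(0)}$ is the empty word, so the left-hand side is just $g_1$, and since $w \in X_r$ we have $g_1 \le w$ because $g_1$ is the least generator in the lexicographic order; this handles the base case. For $|w| \ge 2$, write $w = g_{\alpha(1)} g_{\alpha(2)} \cdots g_{\alpha(m)}$ with $m = |w|$, so that $w_{\text{pre}(|w|-1)} = g_{\alpha(1)} \cdots g_{\alpha(m-1)}$ and the left-hand side is the word $g_1 g_{\alpha(1)} g_{\alpha(2)} \cdots g_{\alpha(m-1)}$.

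The key observation is that both words $g_1 w_{\text{pre}(|w|-1)}$ and $w$ have the \emph{same length} $m$, so the comparison in shortlex reduces entirely to the lexicographic comparison of their letters, position by position. First I would note that the first letter of the left-hand word is $g_1$, which is less than or equal to $g_{\alpha(1)}$, the first letter of $w$, since $g_1$ is the minimal element of $X_r$. If $g_1 < g_{\alpha(1)}$ we are immediately done. The remaining case is $g_{\alpha(1)} = g_1$, in which case the two words agree in their first letter and we must compare $g_{\alpha(1)} g_{\alpha(2)} \cdots g_{\alpha(m-1)}$ against $g_{\alpha(2)} g_{\alpha(3)} \cdots g_{\alpha(m)}$ from the second position onwards.

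This suggests an inductive or iterative argument: having matched a prefix of $g_1$'s, I would show that the comparison propagates. More cleanly, I would argue that since $g_1$ is the smallest letter, for each position $i$ the $i$-th letter of $g_1 w_{\text{pre}(|w|-1)}$ is at most the $i$-th letter of $w$, as the former is obtained from $w$ by prepending the smallest letter and deleting the last letter — a shift operation that can only decrease each coordinate lexicographically since $g_1 \le g_{\alpha(j)}$ for every $j$. Formally, the $i$-th letter of the left-hand side is $g_1$ if $i=1$ and $g_{\alpha(i-1)}$ if $i \ge 2$, while the $i$-th letter of $w$ is $g_{\alpha(i)}$; the inequality $g_1 \le g_{\alpha(1)}$ gives position $1$, and at the first position where the words differ the left-hand side is strictly smaller. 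Thus $g_1 w_{\text{pre}(|w|-1)} \le w$ in all cases.

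The main obstacle, though it is a minor one, will be handling the edge case where $w$ consists entirely of the letter $g_1$, i.e.\ $w = g_1^m$; here both words equal $g_1^m$ and the inequality holds with equality, which is consistent with the non-strict $\le$ in the statement. I would make sure the argument treats this boundary correctly rather than asserting strict inequality. Beyond that, the proof is a routine unwinding of the shortlex definition, and the only care needed is to state the letter-by-letter comparison precisely and to remember that equal word lengths reduce shortlex to lexicographic order.
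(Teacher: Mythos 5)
Your overall strategy is sound and is essentially the paper's: both words have length $|w|$, so shortlex reduces to a letter-by-letter lexicographic comparison, and the minimality of $g_1$ in $X_r$ drives the conclusion. The paper runs this contrapositively --- assuming $g_1w_{\text{pre}(|w|-1)} > w$ forces the chain $g_1 \ge g_{\alpha(1)} \ge \dots \ge g_{\alpha(m)}$, hence every letter of $w$ equals $g_1$, whence both words are $g_1^m$ and the assumed strict inequality is absurd --- while you argue directly at the first differing position. These are the same proof in different dress.

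However, one justification you give is false as stated: it is not true that ``for each position $i$ the $i$-th letter of $g_1w_{\text{pre}(|w|-1)}$ is at most the $i$-th letter of $w$.'' Take $w = g_2g_1$: then $g_1w_{\text{pre}(1)} = g_1g_2$, and at position $2$ the left word carries $g_2$ while $w$ carries $g_1$, so coordinate-wise domination fails. The ``shift operation can only decrease each coordinate'' heuristic is wrong because the letter at position $i \ge 2$ of the left word is $g_{\alpha(i-1)}$, and $g_{\alpha(i-1)} \le g_{\alpha(i)}$ need not hold for arbitrary $w$. Fortunately, lexicographic order only consults the \emph{first} differing position, and there your earlier observation rescues the argument: if the two words agree on positions $1,\dots,j-1$, then that agreement forces $g_{\alpha(1)} = \dots = g_{\alpha(j-1)} = g_1$, so the left word's letter at position $j$ is $g_1$, which is $\le g_{\alpha(j)}$, and strictly smaller when the letters differ. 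So your conclusion stands, but you should delete the coordinate-wise claim and promote the matched-prefix-of-$g_1$'s induction (which you sketch with ``having matched a prefix of $g_1$'s'') to be the actual argument --- that propagation of equality is exactly what the paper's chain of inequalities encodes.
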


\begin{proof}
Let $w=g_{\alpha(1)}\dots g_{\alpha(m)}$ and assume $g_1g_{\alpha(1)}\dots g_{\alpha(m-1)}>g_{\alpha(1)}\dots g_{\alpha(m)}$. Then $g_1 \ge g_{\alpha(1)} \ge g_{\alpha(2)} \ge \dots \ge g_{\alpha(m-1)} \ge g_{\alpha(m)}$ and so $g_{\alpha(i)}=g_1$ for all $1 \le i \le m$ in which case $g_1^m < g_1^m$ which is a contradiction.
\end{proof}

\begin{lemma} \label{non-decreasing-desc}
$|\text{desc}(O_r(n))| \le |\text{desc}(O_r(n+1))|$
\end{lemma}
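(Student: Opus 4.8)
The plan is to reduce the claim to producing an injection $\iota : D(O_r(n)) \to D(O_r(n+1))$, which suffices because the sets $D(O_r(n))$ and $\text{desc}(O_r(n))$ are in bijection (and likewise at level $n+1$). Two structural facts about ordinary subsemigroups drive everything. First, $O_r(n+1)$ is obtained from $O_r(n)$ simply by turning its multiplicity into a gap: writing $m=m(O_r(n))$, we have $G(O_r(n+1))=G(O_r(n))\cup\{m\}$ and $m=f(O_r(n+1))$. Second, ordinariness means exactly that the gaps form an initial segment of the shortlex order, so $G(O_r(n+1))=\{w \mid w \le m\}$, and dually every word strictly larger than $m$ lies in $O_r(n+1)$. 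I will use these repeatedly without the length/region case analysis of Lemma \ref{num-ord-desc}, since the ordinariness framing avoids splitting on whether $|m|=k$ or $|m|=k+1$.

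The only element of $D(O_r(n))$ that cannot obviously be kept is $m$ itself, since $D(O_r(n))\subseteq O_r(n)$ and $O_r(n)\setminus O_r(n+1)=\{m\}$. So I would define $\iota$ to be the identity on $D(O_r(n))\setminus\{m\}$ and to send $m\mapsto g_1 m$. For the identity part, take $h\in D(O_r(n))$ with $h\ne m$: then $h\in O_r(n+1)$; it is still a minimal generator there, since deleting $m$ from the semigroup cannot create a factorisation of $h$; its subwords $h_{\text{pre}(|h|-1)}$ and $h_{\text{suf}(|h|-1)}$ are still gaps because $G(O_r(n))\subseteq G(O_r(n+1))$; and $h>m=f(O_r(n+1))$ because $m$ is the least element of $O_r(n)$ and $h\in O_r(n)\setminus\{m\}$. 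By Remark \ref{ideal-desc-condition} this places $h$ in $D(O_r(n+1))$. For the image of $m$, the suffix $(g_1 m)_{\text{suf}(|m|)}=m$ is a gap of $O_r(n+1)$ by construction, while the prefix $(g_1 m)_{\text{pre}(|m|)}=g_1 m_{\text{pre}(|m|-1)}$ satisfies $g_1 m_{\text{pre}(|m|-1)}\le m=f(O_r(n+1))$ by Lemma \ref{new-desc}, hence is a gap by ordinariness; a short length count shows $g_1 m\in MG(O_r(n+1))$ (its length is $|m|+1$, whereas any product of two elements of $O_r(n+1)$ has length at least $2|m|$, with the degenerate case $|m|=1$ handled directly using that $g_1$ is itself a gap). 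Thus $g_1 m\in D(O_r(n+1))$. Injectivity is then immediate: $g_1 m$ has $m\notin G(O_r(n))$ as a suffix, so $g_1 m\notin D(O_r(n))$ and in particular differs from every identity image.

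The hard part, and really the only delicate point, is controlling the prefix of the replacement generator $g_1 m$. Prepending a letter increases length, yet the gaps of an ordinary subsemigroup are short, so a priori there is no reason for $(g_1 m)_{\text{pre}(|m|)}$ to be a gap at all; this is precisely what Lemma \ref{new-desc} supplies, and it is why that technical lemma was isolated beforehand. Everything else is bookkeeping about how $MG$, the Frobenius and the gap set change when the single word $m$ is added to the gaps. As a sanity check I would note that the resulting inequality is tight exactly at the ``block boundaries'' where $G(O_r(n))$ consists of all words up to a fixed length, in which case one verifies directly that $|D(O_r(n))|=|D(O_r(n+1))|$, consistent with $\le$.
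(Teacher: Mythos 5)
Your proof is correct and is essentially the paper's own argument: since $f(O_r(n+1))=m(O_r(n))$, your injection (identity away from $m$, with $m\mapsto g_1m$) coincides with the paper's observation that every descendant $O_r(n)_h$ with $h\neq f(O_r(n+1))$ survives as a descendant of $O_r(n+1)$, supplemented by the one new descendant $g_1f(O_r(n+1))$, whose prefix condition is supplied by Lemma \ref{new-desc} exactly as in your write-up. You simply make explicit some bookkeeping (the minimal-generator check, injectivity, and the degenerate case $|m|=1$) that the paper leaves implicit.
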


\begin{proof}
Let $f=f(O_r(n+1))$, then it is clear that whenever $O_r(n)_h$ is an ideal, $O_r(n+1)_h$ is also an ideal for all $h \neq f$. So we need only show that $D(O_r(n+1))$ contains at least one element that $D(O_r(n))$ does not. Let $h=g_1f$, then by Lemma \ref{new-desc}, $h_{\text{pre}(|h|-1)} \le f \in G(O_r(n+1))$ and clearly $h_{\text{suf}(|h|-1)}=f \in G(O_r(n+1)$ but is not in $G(O_r(n))$. Therefore $O_r(n+1)_h$ is an ideal but $O_r(n)_h$ is not an ideal.
\end{proof}

\begin{propn}
$|\text{desc}(O_r(n))| \ge r^m$ where $m=\lfloor log_r((r-1)n+r) \rfloor$.
\end{propn}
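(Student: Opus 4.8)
The plan is to count the set $D(O_r(n))$ directly, since $|\text{desc}(O_r(n))| = |D(O_r(n))|$. By Remark \ref{ideal-desc-condition}, a word $h$ lies in $D(O_r(n))$ precisely when $h \notin G(O_r(n))$, $h > f(O_r(n))$, and both $h_{\text{pre}(|h|-1)}$ and $h_{\text{suf}(|h|-1)}$ lie in $G(O_r(n))$; because $O_r(n)$ is ordinary, any $h \notin G(O_r(n))$ automatically satisfies $h > f(O_r(n))$, so only the prefix/suffix condition is substantive. First I would fix notation by writing $k = |f(O_r(n))|$ and recording that $G(O_r(n))$ consists of all words of length at most $k-1$ together with the smallest $i$ words of length $k$, where (as in Lemma \ref{num-ord-desc}) $n = \left(\sum_{j=1}^{k-1} r^j\right) + i$ with $1 \le i \le r^k$. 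A short computation then gives $(r-1)n + r = r^k + (r-1)i$, so that $r^k < (r-1)n+r \le r^{k+1}$; hence $m = k$ when $i < r^k$ and $m = k+1$ when $i = r^k$. This splits the argument into two cases according to the value of $m$.

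For the main case $i < r^k$ (so $m = k$), I would exhibit $r^k$ descendants in two disjoint families. Every word $w$ of length $k$ that is not a gap is a descendant: its maximal proper prefix and suffix have length $k-1$, and all words of length at most $k-1$ are gaps, so the condition of Remark \ref{ideal-desc-condition} holds; this yields $r^k - i$ descendants. For the remaining $i$ descendants I would use the length-$(k+1)$ words of the form $g_1 u$, as $u$ ranges over the $i$ gaps of length $k$. Here $(g_1 u)_{\text{suf}(k)} = u$ is a gap by construction, while $(g_1 u)_{\text{pre}(k)} = g_1 u_{\text{pre}(k-1)}$ is a gap because Lemma \ref{new-desc} gives $g_1 u_{\text{pre}(k-1)} \le u$, and a word of length $k$ that is at most the gap $u$ is itself among the smallest $i$ words of length $k$. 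Since $u \mapsto g_1 u$ is injective and these words have length $k+1$, they are disjoint from the first family, giving $(r^k - i) + i = r^k = r^m$ descendants.

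In the boundary case $i = r^k$ (so $m = k+1$), every word of length $k$ is a gap, and I would observe that then every word $h$ of length $k+1$ is a descendant: both $h_{\text{pre}(k)}$ and $h_{\text{suf}(k)}$ are words of length $k$, hence gaps, and $h \notin G(O_r(n))$ since $|h| > k$. This produces all $r^{k+1} = r^m$ words of length $k+1$. I expect the main obstacle to be the verification that the prefix of $g_1 u$ is again a gap; this is exactly where Lemma \ref{new-desc} is needed, and it is what forces the use of the smallest generator $g_1$ rather than an arbitrary one. The only other delicate point is the boundary $i = r^k$, where the two-family count would give merely $r^k$ and must be replaced by the stronger count of all length-$(k+1)$ words above.
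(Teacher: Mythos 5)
Your proof is correct, but it takes a genuinely different route from the paper. The paper first computes $|\text{desc}(O_r(n))|=r^m$ exactly in the ``full level'' case $n=\frac{r^m-r}{r-1}$ (where the gaps are precisely all words of length at most $m-1$), and then handles general $n=\frac{r^m-r}{r-1}+i$ by applying the monotonicity result, Lemma \ref{non-decreasing-desc}, inductively $i$ times to fall back to that case. You instead bypass Lemma \ref{non-decreasing-desc} entirely and count $r^m$ explicit descendants at every $n$: the $r^k-i$ non-gap words of length $k$, plus the $i$ words $g_1u$ with $u$ a length-$k$ gap, where Lemma \ref{new-desc} guarantees $g_1u_{\text{pre}(k-1)}\le u$ and hence that the long prefix is again a gap (this is the same engine that drives the paper's proof of Lemma \ref{non-decreasing-desc}, where the single new descendant is $g_1f$ --- your second family is in effect that lemma unrolled). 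Your identity $(r-1)n+r=r^k+(r-1)i$ also makes the relationship between $m$ and the Frobenius length $k$ explicit, including the boundary case $i=r^k$ where $m=k+1$ and you correctly switch to counting all $r^{k+1}$ words of length $k+1$; the paper leaves this identification implicit in its choice of parametrization. What each approach buys: the paper's is shorter given Lemma \ref{non-decreasing-desc} (which it has already proved and which mirrors the numerical-semigroup literature), while yours is self-contained, non-inductive, and exhibits the witnessing descendants concretely. The only point to tidy is the degenerate case $k=1$ (and $n=0$), where the prefix/suffix condition of Remark \ref{ideal-desc-condition} involves the empty word and needs the usual convention; this edge case is glossed over equally in the paper's own argument, so it is not a gap specific to your proof.
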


\begin{proof}
Consider the case when $n=r+r^2+\dots+r^{m-1}=\frac{r^m-r}{r-1}$ for some $m \ge 1$, then $G(O_r(n))=\{w \in FS_r \mid |w| \le r^{m-1}\}$ and every word $w$ of length $m$ satisfies $w_{\text{pre}(|w|-1)},w_{\text{suf}(|w|-1)} \in G(O_r(n))$ and no longer words do. Hence $|\text{desc}(O_r(n))| = r^m$. Otherwise $n=\frac{r^m-r}{r-1}+i$ for some $m \ge 1$ and some $1 \le i \le r^m-1$. By inductively applying Lemma \ref{non-decreasing-desc}, we have $|\text{desc}(O_r(n))| \ge |\text{desc}(O_r(n-i))| = r^m$.
\end{proof}

This underestimate has the largest error when $n=r+r^2+\dots+r^m-1=\frac{r^{m+1}-r}{r-1}-1$. So take this as a lower bound and for $r \ge 2$, it is always true that
\[
h(n,r):=\frac{r-1}{r}n+\frac{2r-1}{r}=\frac{r-1}{r}(n+1)+1=r^m \le |desc(O_r(n))|.
\]

So if we let

\[
L^I(n,r)=\sum_{i=0}^{K(n,r)}{h(n-i,r)-1 \choose i}
\]
where $K(n,r)=\left\lfloor\frac{r-1}{2r-1}(n+1)\right\rfloor$ is obtained from the inequality $h(n-i,r)-1 \ge i$. Then similar to the argument in Section \ref{sect:uandl} for the lower bound, we have proved the following:

\begin{thm} \label{ideal-lower-bound}
For $r \ge 2$, $a_n^I(FS_r) \ge L^I(n,r)$.
\end{thm}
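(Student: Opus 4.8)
The plan is to reproduce, inside the two-sided ideal tree, the subtree construction used for the subsemigroup lower bound in Section~\ref{sect:uandl}, replacing the role of $p(n,r)$ (the number of descendants of $O_r(n)$ in the full subsemigroup tree) by the quantity $h(n,r) \le |\text{desc}(O_r(n))|$ established above. Concretely, I would build a subtree of the ideal tree whose single infinite branch is the chain of ordinary ideals $FS_r = O_r(0) \to O_r(1) \to O_r(2) \to \cdots$, and whose other nodes are obtained by branching off an ordinary ideal $O_r(n-i)$ and then repeatedly removing minimal generators drawn from the set $D(O_r(n-i))$ of its ideal-descendant generators.

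The first step is to check that such branch-offs stay inside the ideal tree. Given an increasing sequence $h_1 < h_2 < \cdots < h_i$ of elements of $D(O_r(n-i))$, I would remove them in order and argue by induction that each intermediate set is an ideal: after removing $h_1,\dots,h_{j-1}$ the set of gaps has only grown, so the conditions $h_{j,\text{pre}(|h_j|-1)}, h_{j,\text{suf}(|h_j|-1)} \in G$ from Remark~\ref{ideal-desc-condition} still hold, while the current Frobenius equals $h_{j-1} < h_j$; moreover $h_j$ is still a minimal generator, since shrinking a subsemigroup cannot destroy a minimal generator that survives in it. Hence by Remark~\ref{ideal-desc-condition} each removal produces an ideal descendant, and every size-$i$ subset $S \subseteq D(O_r(n-i))$ yields, by removing its elements in increasing order, a well-defined index-$n$ ideal whose gaps are $G(O_r(n-i)) \cup S$.

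Next I would organise the count. To avoid counting the ordinary spine twice, I would require the branch-off set $S$ to omit the multiplicity $m(O_r(n-i))$, which is the smallest element of $D(O_r(n-i))$ and whose removal merely continues the spine to $O_r(n-i+1)$. Then every level-$n$ node of the subtree is uniquely encoded by the index $n-i$ at which it leaves the spine together with a subset $S \subseteq D(O_r(n-i)) \setminus \{m(O_r(n-i))\}$ with $|S| = i$; summing over $i$ gives $\sum_i \binom{|D(O_r(n-i))|-1}{i}$ nodes. Using $|D(O_r(n-i))| = |\text{desc}(O_r(n-i))| \ge h(n-i,r)$ together with monotonicity of the binomial coefficient, this is at least $\sum_{i=0}^{K(n,r)} \binom{h(n-i,r)-1}{i} = L^I(n,r)$, where the upper limit $K(n,r)$ is exactly the point at which $h(n-i,r)-1 \ge i$ first fails (the analogue of $J(n,r)$). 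Since the subtree sits inside the full ideal tree, its level-$n$ count is a lower bound for $a_n^I(FS_r)$, which yields the theorem.

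The main obstacle I anticipate is the bookkeeping that makes the subtree genuinely a tree with a unique decomposition of each node: one must confirm that the ideal-descendant condition of Remark~\ref{ideal-desc-condition} is preserved under successive removals (handled above by noting that gaps only grow), and that excluding the multiplicity from each branch-off set eliminates all overlap between the spine and the branches, so that no level-$n$ ideal is counted more than once. The remaining ingredients — the bound $|\text{desc}(O_r(n-i))| \ge h(n-i,r)$ and the determination of $K(n,r)$ from $h(n-i,r)-1 \ge i$ — are already in place.
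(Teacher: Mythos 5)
Your proposal is correct and takes essentially the same approach as the paper: the paper proves Theorem \ref{ideal-lower-bound} precisely by transplanting the subtree construction of Section \ref{sect:uandl} into the ideal tree, with the spine of ordinary ideals, the bound $h(n,r) \le |\text{desc}(O_r(n))|$, and $K(n,r)$ obtained from $h(n-i,r)-1 \ge i$ playing the roles of the ordinary subsemigroups, $p(n,r)$, and $J(n,r)$. The details you spell out (gap sets only grow under successive removals so the conditions of Remark \ref{ideal-desc-condition} persist, and excluding the multiplicity gives a unique encoding of each level-$n$ node) are exactly what the paper leaves implicit in the phrase ``similar to the argument in Section \ref{sect:uandl}''.
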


As a consequence we can see that two-sided ideal growth is bounded below by an expoential:

\begin{thm} \label{ideal-beneath}
For $r \ge 2$, $log(2^n)=O(log(s_n^I(FS_r)))$.
\end{thm}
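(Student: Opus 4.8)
The plan is to establish a lower bound on $s_n^I(FS_r)$ that grows like $2^{cn}$ for some constant $c>0$, which immediately yields $\log(2^n) = O(\log(s_n^I(FS_r)))$. Since $s_n^I(FS_r) \ge a_n^I(FS_r) \ge L^I(n,r)$ by Theorem \ref{ideal-lower-bound}, it suffices to bound $L^I(n,r)$ from below, and the cleanest route is to extract a single well-chosen term from the sum defining $L^I(n,r)$, exactly as was done in the proof of Theorem \ref{growth-below} for the subsemigroup case.

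First I would recall that $h(n,r) = \frac{r-1}{r}(n+1)+1$ is linear in $n$ with positive slope (for $r \ge 2$), so $h(n-i,r)-1 = \frac{r-1}{r}(n-i+1)$ grows linearly in $n$ once $i$ is comparable to $n$. The idea is to pick an index $i$ proportional to $n$ — say $i = \lfloor \alpha n \rfloor$ for a suitable constant $\alpha \in (0,1)$ — chosen small enough that $i \le K(n,r)$ (so the term is actually present in the sum) yet large enough that the binomial coefficient $\binom{h(n-i,r)-1}{i}$ is genuinely exponential. With $i = \beta n$ and the top of the binomial coefficient being a linear function $\gamma n$ of $n$ (with $\gamma > \beta$), the standard estimate $\binom{\gamma n}{\beta n} \ge \left(\frac{\gamma}{\beta}\right)^{\beta n}$ gives a lower bound of the form $C^n$ with $C = (\gamma/\beta)^{\beta} > 1$, hence $\log(s_n^I(FS_r)) \ge \beta n \log(\gamma/\beta) = \Omega(n)$, which is precisely $\log(2^n) = O(\log(s_n^I(FS_r)))$.

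Concretely, I would choose $i$ to be a fixed fraction of $K(n,r) = \lfloor \frac{r-1}{2r-1}(n+1)\rfloor$, for instance $i = \lfloor \frac{1}{2}K(n,r)\rfloor$. This guarantees $i \le K(n,r)$ automatically, so the term $\binom{h(n-i,r)-1}{i}$ appears in $L^I(n,r)$. Then I would check that $h(n-i,r)-1$ remains a positive linear function of $n$ that is strictly larger than $i$ (indeed $h(n-i,r)-1 \ge i$ holds for all $i \le K(n,r)$ by construction of $K(n,r)$), and substitute into the inequality
\[
s_n^I(FS_r) \ge L^I(n,r) \ge \binom{h(n-i,r)-1}{i} \ge \left(\frac{h(n-i,r)-1}{i}\right)^i,
\]
using the elementary bound $\binom{a}{b} \ge (a/b)^b$. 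Taking logarithms then gives $\log(s_n^I(FS_r)) \ge i\bigl(\log(h(n-i,r)-1) - \log i\bigr)$, and since both $h(n-i,r)-1$ and $i$ are linear in $n$ with $i$ being the strictly smaller one, the bracketed quantity tends to a positive constant (the $\log n$ terms cancel), while $i = \Theta(n)$, yielding $\log(s_n^I(FS_r)) = \Omega(n)$ as required.

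The main obstacle — really the only delicate point — is verifying that the ratio $(h(n-i,r)-1)/i$ stays bounded away from $1$ as $n \to \infty$, so that its logarithm contributes a strictly positive constant rather than vanishing. This requires choosing the fraction $\alpha$ defining $i$ strictly below the threshold at which $h(n-i,r)-1 = i$; picking $i = \lfloor \frac{1}{2}K(n,r)\rfloor$ rather than $i = K(n,r)$ itself ensures this gap, since at $i = K(n,r)$ the ratio degenerates to $1$ and the bound collapses. Everything else is routine asymptotic bookkeeping of linear functions, and I would, as in Theorem \ref{growth-below}, freely extend the binomial coefficient via the Gamma function to avoid worrying about integrality of $i$.
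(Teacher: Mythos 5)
Your proof is correct and follows essentially the same route as the paper's: the paper also extracts a single term of $L^I(n,r)$ with $i$ a fixed fraction of $n$ strictly below the threshold $K(n,r)$ (it takes $i=n/4$, noting $n/4 \le K(n,r)$ for $r \ge 2$, where you take $i=\lfloor K(n,r)/2\rfloor$), and likewise applies $\binom{a}{b}\ge (a/b)^b$ so that the ratio tends to a constant greater than $1$, yielding a $C^n$ lower bound with $C>1$. The difference in the specific choice of $i$ is immaterial.
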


\begin{proof}

First note that for $r \ge 2$, $n/4 \le K(n,r)$ so by considering the term $i=n/4$ we have
\[
s_n^I(FS_r) \ge a_n^I(FS_r) \ge L^I(n,r) \ge \left(\frac{3n(r-1)/4r+(2r-1)/r}{n/4}\right)^{n/4}  \sim \left(\left(\frac{3(r-1)}{r}\right)^{\frac{1}{4}}\right)^n
\]
Thus $log(2^n)=n=O(log(s_n^I(FS_r)))$.
\end{proof}

Since $a_n^I(FS_r) \le a_n^{RI}(FS_r)$, we immediately deduce from Theorem \ref{one-sided-growth}, $log(s_n^I(FS_r))=O(log(2^n))$ for $r \ge 2$. Hence,

\begin{thm} \label{ideal-growth-strict-type}
For $r \ge 2$, $FS_r$ has ideal growth of strict type $2^n=n^{n/log(n)}$.
\end{thm}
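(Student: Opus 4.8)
The plan is to establish the two halves of the relation $log(s_n^I(FS_r)) \asymp log(2^n) = n$ separately, since by the definition of strict growth type (as for subsemigroup growth) this is exactly what the statement asserts. The lower half, $log(2^n) = O(log(s_n^I(FS_r)))$, is precisely the content of Theorem \ref{ideal-beneath}, so I would simply invoke it. Everything then reduces to the upper half, $log(s_n^I(FS_r)) = O(log(2^n))$.

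For the upper bound the cleanest route is to exploit the inequality $a_n^I(FS_r) \le a_n^{RI}(FS_r)$ recorded at the start of the section, which immediately gives $s_n^I(FS_r) \le s_n^{RI}(FS_r)$. Theorem \ref{one-sided-growth} already shows that one-sided ideal growth has strict type $2^n$, hence $log(s_n^{RI}(FS_r)) = O(n)$, and $s_n^I$ inherits the same bound. Alternatively, one can argue directly from the Fuss--Catalan formula: since $\binom{r(n+1)}{n+1} \le (er)^{n+1}$ one has $a_n^I(FS_r) \le a_n^{RI}(FS_r) \le (er)^{n+1}$, and as this upper bound is non-decreasing in $n$, summing the $n$ terms yields $s_n^I(FS_r) \le n\,(er)^{n+1}$; taking logarithms gives $log(s_n^I(FS_r)) \le log(n) + (n+1)log(er) = O(n) = O(log(2^n))$. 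Combining the two halves produces $log(s_n^I(FS_r)) \asymp n = log(2^n)$, and since $2^n = n^{n/log(n)}$ the alternative form is then immediate.

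I do not expect a genuine obstacle at this final step: it is a sandwiching argument whose two sides constitute the real work of the section and are already in hand. The delicate estimate was the lower bound of Theorem \ref{ideal-beneath}, which rests on the count $h(n,r) \le |\text{desc}(O_r(n))|$ assembled from Lemmas \ref{new-desc} and \ref{non-decreasing-desc}, while the matching upper bound relies on the exact Fuss--Catalan enumeration of one-sided ideals together with $a_n^I \le a_n^{RI}$. The only care required here is to confirm that $\asymp$ is stable enough under these two one-sided comparisons to glue the exponential lower bound and the exponential upper bound into the single stated strict growth type.
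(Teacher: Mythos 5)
Your proposal is correct and follows essentially the same route as the paper: the lower half is exactly Theorem \ref{ideal-beneath}, and the upper half is obtained, as in the paper, from $a_n^I(FS_r) \le a_n^{RI}(FS_r)$ together with Theorem \ref{one-sided-growth} (equivalently the Fuss--Catalan bound $\binom{r(n+1)}{n+1} \le (er)^{n+1}$). Your explicit summation step $s_n^I(FS_r) \le n\,(er)^{n+1}$ merely spells out a detail the paper leaves implicit.
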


It can be checked that the argument in Section \ref{sect:poly} is valid for ideals also, and so for a fixed index $n$, we also have the following:

\begin{thm} \label{poly-conj-ideal}
$a_n^I(FS_r)$ is a polynomial in $r$ of degree $n$ with no constant term.
\end{thm}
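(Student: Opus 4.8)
The plan is to transcribe the entire apparatus of Section~\ref{sect:poly} to the setting of ideals, since essentially every construction there survives the restriction from arbitrary finite index subsemigroups to ideals. The first thing I would check is that the equivalence relation $\sim$, together with the notions of gap pattern, support, orbit and minimal representative, is well defined on the class of ideals. This rests on a single observation: each $\sigma \in \text{Sym}(X_r)$ extends to an automorphism of $FS_r$, and automorphisms carry two-sided ideals to two-sided ideals, so if $\Lambda$ is an index $n$ ideal then $FS_r \setminus \sigma(G(\Lambda))$ is again an index $n$ ideal. Hence the $\sim$-class of an ideal consists entirely of ideals, and I may define $Z^I(n)$, $Z^I(n,k)$ and $Z^I(n,k,i)$ exactly as before, now indexing only ideals. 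The orbit-stabiliser computation of $|\text{Orb}(P)|$ is unaffected.

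With this in place I would reproduce the counting identity
\[
a_n^I(FS_r) = \sum_{k=1}^r \sum_{P \in Z^I(n,k)} |\text{Orb}(P)| \cdot \binom{r}{k},
\]
whose justification is word-for-word the subsemigroup case: a gap pattern with support size $k$ can be placed on $\binom{r}{k}$ choices of generators, and each placement yields $|\text{Orb}(P)|$ distinct ideals. The truncation of the outer sum at $k=n$ is free, because Lemma~\ref{no-z-n-n+} already bounds the support of \emph{any} index $n$ subsemigroup by $n$, and ideals are a special case; thus $Z^I(n,k)=\emptyset$ for $k>n$. Substituting $\binom{r}{k}=\frac{1}{k!}\prod_{j=0}^{k-1}(r-j)$ and expanding the falling factorials through the Stirling numbers $s(k,j)$ of the first kind then displays $a_n^I(FS_r)$ as a polynomial in $r$.

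To finish I would read off the degree and the constant term. Every falling factorial $\prod_{j=0}^{k-1}(r-j)$ with $k\ge 1$ carries a factor of $r$, so the polynomial has no constant term. The coefficient of $r^n$ comes solely from the $k=n$ summand and equals $\frac{1}{n!}\sum_{i\mid n!}|Z^I(n,n,i)|\cdot i$, which is nonzero: the ordinary subsemigroup $O_n(n)$ is an index $n$ ideal whose support is all of $X_n$, so it contributes a minimal representative to $Z^I(n,n)$. Hence the degree is exactly $n$.

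The only step that is not a purely mechanical transcription — and so the main obstacle — is confirming that the ideal property is preserved throughout the reduction to minimal representatives. One must verify not merely that $\sigma(\Lambda)$ is an ideal for each relabelling $\sigma$, but that the distinguished minimal representative $\Lambda_{min}=FS_k\setminus\sigma_{min}(G(\Lambda))$ stays an ideal; this is again immediate because $\sigma_{min}$ is a generator relabelling, hence an automorphism. Once this closure is secured, no further structural input is needed, and the polynomiality, degree $n$ and vanishing constant term all follow exactly as in Theorem~\ref{poly-conj}.
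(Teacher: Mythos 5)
Your proposal is correct and follows essentially the same route as the paper, whose entire proof of Theorem~\ref{poly-conj-ideal} is the remark that ``the argument in Section~\ref{sect:poly} is valid for ideals also.'' Your explicit checks---that generator permutations are automorphisms of $FS_r$ carrying ideals to ideals (so the $\sim$-classes, orbits and minimal representatives stay within the class of ideals), and that the ordinary subsemigroup $O_n(n)$ is an ideal with full support, making the $r^n$-coefficient nonzero---are precisely the verifications the paper leaves to the reader.
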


We can easily adapt our previous algorithm to use Remark \ref{ideal-desc-condition} instead of Remark \ref{min-gen-rem}. This is computationally a much easier condition to check. This algorithm was implemented using C++ code which is available for download \cite{code}. The polynomials and hence the values of $a_n^I(FS_r)$ for $1 \le n \le 12$ were calculated and are presented in Appendix \ref{app:ideal-values}. It took less than an hour to calculate running on a single Intel Xeon E5-2670 processor.

We observed an interesting connection between $a_n^I(FS_2)$ and the central binomial coeffecients ${n \choose \lfloor n/2 \rfloor}$ which we are unable to explain (see Appendix \ref{app:cent-binom-coeff}). They agree for the first $6$ values and then the central binomial coefficients seem to be an upper bound. It would be of interest to know whether they are an asymptotic upper bound.

Note that, given any finite index ideal $I \subseteq FS_r$ and any word in the set of gaps $G(I)$, then every prefix and suffix of that word is also in the set of gaps, hence supp$(I) \subseteq G(I)$. This implies that there is only one index $n$ ideal $I$ with $|$supp$(I)|=n$, namely the ordinary subsemigroup, and hence using the notation from Section \ref{sect:poly}, $c(n,n)=1$. Consequently,

\begin{thm}
For a fixed index $n$
\[
a_n^I(FS_r) \sim \frac{n^r}{n!}.
\]
\end{thm}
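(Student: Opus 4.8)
The plan is to read the asymptotics straight off the polynomial structure already in hand. As observed just before Theorem \ref{poly-conj-ideal}, the entire counting argument of Section \ref{sect:poly} transfers to ideals, so for fixed $n$ one has the closed form
\[
a_n^I(FS_r)=\sum_{k=1}^{n}\frac{c(n,k)}{k!}\prod_{j=0}^{k-1}(r-j),
\]
where now $c(n,k)$ is the ideal version of the invariant and each $\prod_{j=0}^{k-1}(r-j)=r(r-1)\cdots(r-k+1)$ is the falling factorial, a monic polynomial in $r$ of degree exactly $k$. First I would note that this already displays $a_n^I(FS_r)$ as a polynomial in $r$ of degree $n$ (which is Theorem \ref{poly-conj-ideal}), and that the top power $r^n$ can only come from the $k=n$ term.

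The heart of the matter is then the leading coefficient. Since the falling factorial is monic, the coefficient of $r^n$ equals $c(n,n)/n!$, and the value $c(n,n)=1$ is exactly the fact recorded immediately above the statement: the only index $n$ ideal with support of full size $n$ is the ordinary subsemigroup, whose gap pattern (the $n$ shortest letters) is fixed by every permutation in $\text{Sym}(X_n)$, so its orbit is trivial and it contributes $1$ to $c(n,n)$. Conceptually, this says that the index $n$ ideals of $FS_r$ all of whose gaps are single letters are obtained by choosing which $n$ of the $r$ generators are to be gaps, of which there are exactly $\binom{r}{n}$, while every other gap pattern uses at most $n-1$ generators and hence contributes $O(r^{n-1})$ ideals. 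Either way, $a_n^I(FS_r)=\binom{r}{n}+O(r^{n-1})$, so dividing by $r^n/n!$ and letting $r\to\infty$ forces the ratio to $1$, i.e. $a_n^I(FS_r)\sim r^n/n!$.

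There is no genuine analytic difficulty here: once the degree-$n$ polynomiality and the value $c(n,n)=1$ are in place, the result is just the statement that a polynomial is asymptotic to its leading monomial. The one point genuinely worth flagging is the statement itself: the displayed right-hand side $n^r/n!$ cannot be what is meant, since $n^r$ is exponential in $r$ and therefore incompatible with $a_n^I(FS_r)$ being a degree-$n$ polynomial in $r$ by Theorem \ref{poly-conj-ideal}; the intended expression is $r^n/n!$, with base and exponent transposed, and it is this corrected asymptotic that the argument above establishes.
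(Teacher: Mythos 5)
Your argument is correct and is essentially the paper's own proof: the paper likewise reads the asymptotic off the polynomial formula of Section \ref{sect:poly} applied to ideals, using the observation that $\mathrm{supp}(G(I)) \subseteq G(I)$ forces the ordinary subsemigroup to be the unique full-support gap pattern, hence $c(n,n)=1$ and the degree-$n$ polynomial of Theorem \ref{poly-conj-ideal} has leading coefficient $\frac{1}{n!}$. You are also right to flag the misprint: the displayed $\frac{n^r}{n!}$ transposes $n$ and $r$ and should read $\frac{r^n}{n!}$, as confirmed both by the degree-$n$ polynomiality and by the explicit polynomials in Appendix \ref{app:ideal-values}, whose leading terms are all $\frac{1}{n!}r^n$.
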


\section{Congruence growth}

Let Cong$_n(FS_r)$ denote the set of (two-sided) congruences on $FS_r$ with precisely $n$ nonempty congruence classes, and let $a_n^C(FS_r)=|Cong_n(FS_r)|$. We say that a semigroup $S$ is at most $r$ generated if there exists an $r$-element subset of $S$ that generates the semigroup. It is clear that given any at most $r$ generated semigroup $S$ of order $n$, there exists a congruence $\rho \in Cong_n(FS_r)$ such that $FS_r / \rho \cong S$. The number of congruences in $Cong_n(FS_r)$ that quotient to give $S$ is precisely the number of distinct epimorphisms from $FS_r$ to $S$. There are at most $n^r$ such epimorphisms since the map is determined entirely by the image of the generators $X_r$. Hence
\[
f(n,r) \le a_n^C(FS_r) \le n^r \cdot f(n,r)
\]
where $f(n,r)$ is the number of non-isomorphic at most $r$ generated semigroups of order $n$.

\begin{propn} \label{cong-growth-strict-lower-bound}
For $r \ge 2$, $log(2^n)=O(log(s_n^C(FS_r)))$.
\end{propn}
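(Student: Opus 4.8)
The plan is to bound $s_n^C(FS_r)$ from below by the two-sided ideal counts, so that the exponential lower bound already established in Theorem \ref{ideal-beneath} can be reused directly. The mechanism is the Rees congruence: every cofinite ideal gives rise to a distinct congruence, and this injection lines up (after a shift in the number of classes) the sequences $a_n^I$ and $a_n^C$.

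First I would recall that any finite index two-sided ideal $I \subseteq FS_r$ determines its Rees congruence $\rho_I$, whose nonempty classes are the singletons $\{w\}$ for $w \in FS_r \setminus I$ together with the single class $I$. If $I$ has index $n$, i.e. $|G(I)| = n$, then $\rho_I$ has exactly $n+1$ nonempty classes, so $\rho_I \in \mathrm{Cong}_{n+1}(FS_r)$. (One should check the routine point that $\rho_I$ really is a congruence, which is the defining property of a Rees quotient, and that the associated quotient $FS_r/I$ is a well-defined semigroup of order $n+1$.) Next I would verify that the assignment $I \mapsto \rho_I$ is injective: since $FS_r$ is infinite while $FS_r \setminus I$ is finite, the class $I$ is the unique infinite class of $\rho_I$, so $I$ can be recovered from $\rho_I$. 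Distinct ideals of the same index therefore yield distinct congruences, and we obtain
\[
a_n^I(FS_r) \le a_{n+1}^C(FS_r) \qquad \text{for all } n.
\]

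Summing over the index then gives
\[
s_n^C(FS_r) = \sum_{i=1}^n a_i^C(FS_r) \ge \sum_{i=2}^n a_{i-1}^I(FS_r) = s_{n-1}^I(FS_r).
\]
By Theorem \ref{ideal-beneath}, $\log(2^{n-1}) = O(\log s_{n-1}^I(FS_r))$, so there is a constant $C>0$ with $n-1 \le C \log s_{n-1}^I(FS_r)$ for all large $n$. Combining this with $\log s_n^C(FS_r) \ge \log s_{n-1}^I(FS_r)$ yields $n = O(\log s_n^C(FS_r))$, which is exactly the claim $\log(2^n) = O(\log s_n^C(FS_r))$.

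The conceptual work is entirely in observing that ideals embed into congruences; once that is in place the estimate is immediate from the previously proved ideal bound. The only genuine care needed is the bookkeeping of the index shift from $n$ to $n+1$ and the passage to partial sums, together with confirming that the unique infinite class of a Rees congruence recovers the ideal so that the embedding is injective. This is the route I would take rather than attempting to lower-bound the number $f(n,r)$ of at most $r$-generated semigroups of order $n$ directly, since the latter would require exhibiting exponentially many pairwise non-isomorphic such semigroups, whereas counting congruences (not isomorphism types) lets us bypass any isomorphism issues entirely.
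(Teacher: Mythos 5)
Your proposal is correct and takes essentially the same route as the paper: the paper's proof likewise sends each index $n$ ideal $I$ to its Rees congruence $\rho_I \in \mathrm{Cong}_{n+1}(FS_r)$, deduces $a_n^I(FS_r) \le a_{n+1}^C(FS_r)$, and concludes by Theorem \ref{ideal-beneath}. The extra details you supply (injectivity of $I \mapsto \rho_I$ via the unique infinite class, and the partial-sum bookkeeping with the index shift from $n$ to $n+1$) are exactly the routine verifications the paper leaves implicit.
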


\begin{proof}
Every index $n$ ideal $I$ of $FS_r$ gives rise to a distinct congruence $\rho_I \in $ Cong$_{n+1}(FS_r)$ where $\rho_I=((FS_r \setminus I) \times (FS_r \setminus I)) \cup id_I$ is usually called the Rees congruence on $FS_r$ modulo $I$. Hence $a_n^I(FS_r) \le a_{n+1}^C(FS_r)$ and the result follows by Theorem \ref{ideal-beneath}.
\end{proof}

\begin{thm} \label{cong-strict-growth-type}
$FS_r$ has congruence growth of the same strict type as $f(n,r)$.
\end{thm}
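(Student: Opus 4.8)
The plan is to read off the conclusion directly from the two-sided estimate
\[
f(n,r) \le a_n^C(FS_r) \le n^r \cdot f(n,r)
\]
established immediately before the statement, exploiting the fact that after taking logarithms the multiplicative factor $n^r$ contributes only the additive term $r\log n$, which is of lower order than the growth we are trying to pin down. Since strict type is defined through the partial sums $s_n^C(FS_r)=\sum_{i=1}^n a_i^C(FS_r)$ rather than through $a_n^C(FS_r)$ itself, the first step is to transfer this per-term sandwich to the partial sums, and to interpret ``the same strict type as $f(n,r)$'' as a statement about the partial sums $F(n,r):=\sum_{i=1}^n f(i,r)$ of the counting function $f(\cdot,r)$.

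Concretely, I would sum the displayed inequality over $1\le i\le n$, bounding $i^r\le n^r$ in the upper estimate, to obtain
\[
F(n,r) \le s_n^C(FS_r) \le n^r\,F(n,r).
\]
Taking base-$2$ logarithms then gives
\[
\log F(n,r) \le \log s_n^C(FS_r) \le r\log n + \log F(n,r),
\]
and the left inequality already yields $\log F(n,r)=O\bigl(\log s_n^C(FS_r)\bigr)$.

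For the reverse direction I would absorb the error term $r\log n$ into $\log F(n,r)$. The key elementary observation is that $f(i,r)\ge 1$ for every $i$: a monogenic semigroup of order $i$ is generated by a single element and hence is at most $r$-generated, so such a semigroup always exists. Consequently $F(n,r)\ge n$, whence $\log n \le \log F(n,r)$, and substituting into the upper estimate gives
\[
\log s_n^C(FS_r) \le (r+1)\log F(n,r),
\]
so $\log s_n^C(FS_r)=O\bigl(\log F(n,r)\bigr)$. Combining the two directions yields $\log s_n^C(FS_r)\asymp \log F(n,r)$, which is exactly the assertion that $FS_r$ has congruence growth of the same strict type as $f(n,r)$.

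I do not expect a genuine obstacle here: essentially all of the content is packaged into the sandwich inequality, and the only point requiring care is confirming that the polynomial factor $n^r$ is invisible at the level of the logarithm of the partial sums. This reduces entirely to the trivial lower bound $F(n,r)\ge n$, which holds for every $r\ge 1$, so the same reasoning also covers the degenerate rank-one case. I would emphasise that no monotonicity or growth-rate hypothesis on $f(\cdot,r)$ is needed; the comparison is made between $s_n^C(FS_r)$ and the partial sums $F(n,r)$, consistently with the definition of strict type.
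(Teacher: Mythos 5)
There is a genuine gap, and it lies in your opening move of reinterpreting the statement. Under the paper's definition, ``growth of strict type $g(n)$'' means $\log(s_n) \asymp \log(g(n))$: the partial summation is applied only to the counting sequence of the semigroup, never to the comparison function (thus Corollary \ref{subsgp-growth-strict-type} asserts $\log(s_n(FS_r)) \asymp n\log(n)$, not a comparison with $\sum_{i \le n} i^i$). So the theorem claims $\log(s_n^C(FS_r)) \asymp \log(f(n,r))$, whereas what you prove is $\log(s_n^C(FS_r)) \asymp \log(F(n,r))$ with $F(n,r)=\sum_{i=1}^n f(i,r)$. The two statements differ by exactly the estimate $\log(F(n,r))=O(\log(f(n,r)))$, i.e., by bounding the partial sums of $f$ by their last term, and your closing claim that ``no monotonicity or growth-rate hypothesis on $f(\cdot,r)$ is needed'' shows the gap is real: your argument uses nothing about $f$ beyond $f(i,r)\ge 1$, and for an arbitrary sequence satisfying only that, your conclusion can hold while the theorem's form fails (if such a sequence were enormous at $n/2$ and of size $n$ at $n$, then $\log F(n,r)$ would be governed by the middle term and would not be $O(\log f(n,r))$). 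The pointwise form is moreover what the paper uses immediately afterwards, to deduce that $f(n,r)$ itself grows at least exponentially for $r \ge 2$; your partial-sum version does not yield that deduction.

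The repair requires one of the inputs the paper actually invokes. Its proof uses that $f(n,r)$ is non-decreasing in $n$ to get $s_n^C(FS_r) \le n^{r+1} f(n,r)$, and then Proposition \ref{cong-growth-strict-lower-bound} (for $r \ge 2$, $n = O(\log s_n^C(FS_r))$) to absorb the additive term $(r+1)\log(n)$, giving $\log(s_n^C(FS_r)) = O(\log(f(n,r)))$; the reverse bound is immediate from $f(n,r) \le a_n^C(FS_r) \le s_n^C(FS_r)$. Your argument can be completed along the same lines: monotonicity gives $F(n,r) \le n\,f(n,r)$, and since $f(n,r) \ge n$ (there are $n$ pairwise non-isomorphic monogenic semigroups of order $n$, so your bound $f \ge 1$ upgrades for free, covering $r=1$ as well), one gets $\log(F(n,r)) \le 2\log(f(n,r))$, which converts your conclusion into the theorem. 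Either way, some input beyond the sandwich inequality $f(n,r) \le a_n^C(FS_r) \le n^r f(n,r)$ — monotonicity of $f(\cdot,r)$, or the exponential lower bound of Proposition \ref{cong-growth-strict-lower-bound} — is indispensable, and your proposal as written supplies neither.
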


\begin{proof}
For a fixed $r$, it is clear that $f(n,r)$ is non-decreasing and so by the observation above, $s_n^C(FS_r) \le n^{r+1}\cdot f(n,r)$. Therefore $log(s_n^C(FS_r)) \le (r+1) \cdot log(n) + log(f(n,r))$ and so by the previous proposition, $log(s_n(FS_r))=O(log(f(n,r))$. Again, by the observation above $f(n,r) \le a_n^C(FS_r) \le s_n^C(FS_r)$ and so we have $log(s_n^C(FS_r)) \asymp log(f(n,r))$.
\end{proof}

We conclude from this that for $r \ge 2$, the number of non-isomorphic at most $r$-generated semigroups of order $n$ is at least exponential of strict type $2^n=n^{n/log(n)}$ (whereas the number of at most $r$-generated groups of order $n$ is sub-exponential with strict growth type $n^{log(n)}$).

\begin{question}
Can $f(n,r)$ be bounded above by an exponential?
\end{question}

If we can also show that $f(n,r)$ grows at most exponentially then we will have proved that $FS_r$ has congruence growth of strict type $2^n$. In answering the above question, one may be tempted to consider $3$-nilpotent semigroups as almost all finite semigroups are $3$-nilptotent (see \cite{MR0414380} and \cite{MR2946109} for example). However it turns out almost none of the $r$-generated semigroups are $3$-nilpotent, that is, there are no $r$-generated $3$-nilpotent semigroups of order greater than $r^2+r+1$. The {\sf Smallsemi}~\cite{smallsemi} data library in {\sf GAP}
\cite{gap} tells us that for $n$ from $2$ through $8$, $f(n,2)$ equals: $5,17,68,217,670,1937,5686$. This seems to be exponential, and we conjecture that the above question is true.

\subsection{Ascendingly generated tables}

When counting all semigroups of order $n$, a natural question is whether to count order $n$ semigroups up to isomorphism, which we will denote $f(n)$, or whether to count `up to equality' all $n$-element Cayley tables, that is, all binary operations on an $n$-element set, which we will denote $m(n)$. There are at most $n!$ distinct Cayley tables of any semigroup up to isomorphism (precisely when the semigroup has trivial automorphism group). Hence $m(n)=O(n!\cdot f(n))$. As an aside, it is generally believed (although still an open question) that almost all semigroups have trivial automorphism group, in which case not only does $m(n)=O(n! \cdot f(n))$ but $m(n) \sim n! \cdot f(n)$.

We here introduce something between the two which turns out to be important for counting congruences, namely what we have called ascendingly generated tables. Heuristically we are counting the generators $\{w_1,\dots,w_k\}$ up to equality and the non-generators $\{w_{k+1},\dots,w_n\}$ up to isomorphism and it satisfies $O(n^k \cdot f(n,k))$.

Let $W_n=\{w_1 < w_2 < \dots < w_n\}$ be an $n$-element set and let $C_n=\{(W_n,\otimes)\}$ be the set of all Cayley tables on $W_n$. Let $C_{n,k} \subseteq C_n$ be the set of all Cayley tables on $(W_n,\otimes)$ that are generated (not necessarily minimally) by $\{w_1,\ldots,w_k\}$, so that for example, $C_{n,n}=C_n$.

Given any $w \in W_n$, $(W_n,\otimes) \in C_{n,k}$, there exists some decompostion (possibly many) of $w$ as a product of elements from $\{w_1,\dots,w_k\}$. Let dec$(w)$ denote the smallest decomposition with respect to $<_\otimes$, the shortlex order on $W_n$ over the alphabet $\{w_1,\dots,w_k\}$. We say that $(W_n,\otimes)$ is {\bf ascendingly generated} by $\{w_1,\dots,w_k\}$ if it is generated by $\{w_1,\dots,w_k\}$ and
\[
\text{dec}(w_{k+1}) <_{\otimes} \text{dec}(w_{k+2}) <_{\otimes} \dots <_{\otimes} \text{dec}(w_n).
\]
Let $T_{n,k} \subseteq C_{n,k}$ denote the set of all $(W_n,\otimes) \in C_{n,k}$ ascendingly generated by $\{w_1,\dots,w_k\}$.

\begin{lemma} We observe the following facts: \label{ascending-lemma}
\begin{enumerate}
\item $|T_{n,1}|=n$. 
\item $|T_{n,n}|=m(n)$. \label{asc-lemma-equality} 
\item $T_{n,r} \subseteq T_{n,r+1}$. \label{ascending-included}
\item $f(n,r) \le |T_{n,r}| \le r! \cdot {n \choose r} \cdot f(n,r)$. \label{T-n-r-bounds}
\item $|T_{n,r}|=O(n^r \cdot f(n,r))$ for a fixed $r$.
\end{enumerate}
\end{lemma}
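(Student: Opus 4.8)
The plan is to prove the five assertions of Lemma \ref{ascending-lemma} in turn, most of which follow directly from unwinding the definition of ascending generation. I would handle them roughly in the order stated, since later parts build on the combinatorial estimates obtained in the earlier ones.

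For (1), if $k=1$ then every element of $W_n$ is a power of $w_1$, so the table is completely determined once we know the single product $w_1 \otimes w_1 = w_1^2 \in W_n$; the ascending condition $\text{dec}(w_2) <_\otimes \dots <_\otimes \text{dec}(w_n)$ then forces $w_i = w_1^{\,c_i}$ with strictly increasing exponents, which pins down the whole table from the value of $w_1^2$. There are exactly $n$ choices for $w_1^2$, giving $|T_{n,1}|=n$. For (2), when $k=n$ the generating set is all of $W_n$, the set of non-generators is empty, so the ascending condition is vacuous and $T_{n,n}=C_{n,n}=C_n$, whose cardinality is by definition the number of all Cayley tables on an $n$-element set, namely $m(n)$. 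For (3), I would exhibit the inclusion directly: a table $(W_n,\otimes)$ ascendingly generated by $\{w_1,\dots,w_r\}$ is in particular generated by $\{w_1,\dots,w_{r+1}\}$, and since adjoining $w_{r+1}$ to the generating set does not change which elements are generators-by-necessity (the decompositions of $w_{r+2},\dots,w_n$ over the larger alphabet are no larger in shortlex, and the ascending inequalities are inherited), the same table lies in $T_{n,r+1}$. The point to verify carefully is that the minimal decomposition function $\text{dec}$ and the induced ordering remain compatible when the alphabet grows from $\{w_1,\dots,w_r\}$ to $\{w_1,\dots,w_{r+1}\}$.

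For (4), the heart of the lemma, I would count $T_{n,r}$ by grouping its elements according to the isomorphism class of the underlying semigroup. Each isomorphism class of an at most $r$-generated semigroup of order $n$ contributes at least one ascendingly generated table (pick any $r$-element generating set and relabel so it generates ascendingly), giving the lower bound $f(n,r) \le |T_{n,r}|$. For the upper bound, any ascendingly generated table is obtained from an abstract semigroup $S$ of order $n$ by choosing which $r$ of the $n$ positions serve as the distinguished generators and how the generators are labelled: there are $\binom{n}{r}$ ways to choose the generator positions and at most $r!$ ways to order the chosen generators, and once the generators are fixed the ascending condition determines the labelling of the remaining $n-r$ elements uniquely. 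Hence $|T_{n,r}| \le r! \binom{n}{r} f(n,r)$. Finally, (5) is an immediate asymptotic consequence of (4): for fixed $r$ we have $r!\binom{n}{r} = O(n^r)$, so $|T_{n,r}| = O(n^r \cdot f(n,r))$.

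The main obstacle will be making the upper bound in (4) fully rigorous, specifically verifying that the ascending condition genuinely makes the labelling of the $n-r$ non-generators canonical once a choice of ordered generating $r$-tuple is fixed. The subtle point is that distinct ascendingly generated tables giving isomorphic semigroups must differ in their ordered generating tuple, so the $r!\binom{n}{r}$ factor really does account for all the overcounting; I would establish this by showing that the isomorphism type together with the ordered image of $(w_1,\dots,w_r)$ in $S$ recovers the table, the ascending rule fixing every other entry. The remaining parts are essentially bookkeeping.
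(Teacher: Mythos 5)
Your treatment of parts (2)--(5) is essentially the paper's own proof. For (2) you observe, as the paper does, that when $k=n$ the generation and ascending conditions are vacuous, so $T_{n,n}=C_n$ and $|T_{n,n}|=m(n)$. For (3) the paper simply says it ``follows vacuously from the definition,'' so your flagged concern about the compatibility of $\text{dec}$ and $<_\otimes$ when the alphabet grows from $\{w_1,\dots,w_r\}$ to $\{w_1,\dots,w_{r+1}\}$ is, if anything, more cautious than the paper's own gloss (and you do not resolve it, but neither does the paper). For (4) your argument is identical to the paper's: the lower bound by relabelling any order-$n$ semigroup with a chosen $r$-element generating set so the non-generators appear in the order dictated by the ascending condition, and the upper bound from at most $\binom{n}{r}$ choices of generating set, at most $r!$ labellings of it, and the key point (which you rightly identify as the crux, and which the paper also asserts without elaboration) that the ascending condition leaves no freedom in labelling the remaining $n-r$ elements. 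Part (5) via $r!\binom{n}{r}\le n^r$ is exactly the paper's step.

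Part (1), however, contains a genuine local error. It is not true that the table is determined by, or that there are $n$ choices for, $w_1\otimes w_1$. Since the table is generated by $\{w_1\}$, every $w_i$ is a power of $w_1$, and the ascending condition forces the minimal exponents of $w_2,\dots,w_n$ to be exactly $2,3,\dots,n$, i.e.\ $w_i=w_1^i$ for all $i\le n$; in particular $w_1\otimes w_1$ is \emph{forced} to equal $w_2$ whenever $n\ge 2$ (the alternative $w_1\otimes w_1=w_1$ would give $\langle w_1\rangle=\{w_1\}\ne W_n$). Already at $n=2$ your count as written yields one valid table, whereas $|T_{2,1}|=2$: the two tables differ in $w_1^3=w_1\otimes w_2$, not in $w_1^2$. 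The single free parameter is the overflow power $w_1^{n+1}=w_1\otimes w_n$, which may equal any of the $n$ elements (each choice gives the monogenic semigroup of order $n$ with the corresponding index and period), and associativity then determines every remaining entry of the table from the powers of $w_1$. With the free entry corrected from $w_1^2$ to $w_1^{n+1}$, your argument becomes precisely the paper's (which itself states this part with a small notational slip, writing $w_1^i=w_i$ only for $i\le n-1$ and denoting the free overflow product as $(w_i)^n$).
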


\begin{proof}
\begin{enumerate}
\item Let $(W_n,\otimes) \in T_{n,1}$. Since $(W_n,\otimes)$ is ascendingly generated by $\{w_1\}$ we must have
\[
\overbrace{w_1 \otimes \dots \otimes w_1}^i=w_i \text{\qquad for all $1 \le i \le n-1$}
\]
and $(w_i)^n$ is allowed to equal any of the $n$ elements.
\item From the definition, every Cayley table on $W_n$ is generated ascendingly by the whole of $W_n$.
\item Again this follows vacuously from the definition.
\item Given any at most $r$ generated semigroup $S$ we can always label the elements $\{w_1,\dots,w_n\}$ such that $\{w_1,\dots,w_r\}$ generate $S$ and the remaining elements are labelled in the order they are generated so that it is ascendingly generated, hence $f(n,r) \le T_{n,r}$. Given any at most $r$-generated semigroup $S$ there are at most ${n \choose r}$ possible ways of choosing a generating set which we can label in at most $r!$ ways $\{w_1,\dots,w_r\}$, but for the remaining elements we have no choice how to label them if we want $S$ to be generated ascendingly by $\{w_1,\dots,w_r\}$. Hence $|T_{n,r}| \le r! \cdot {n \choose  r} \cdot f(n,r)$.
\item This follows immediately from \ref{T-n-r-bounds}. \qedhere
\end{enumerate}
\end{proof}

Recall that there are $n!$ monogenic Cayley tables of order $n$, but only $n$ up to isomorphism. So ascendingly generated tables behave like `up to isomorphism' for $k=1$ but like `up to equality' for $k=n$.

Using the {\sf Smallsemi}~\cite{smallsemi} data library in {\sf GAP}
\cite{gap} we calculated the number of Cayley tables on $\{w_1,\dots,w_n\}$ generated ascendingly by $\{w_1,\dots,w_k\}$ (see Table 1).

\begin{table}[ht] \center \label{gap-tables}
\begin{tabular}{| c || c | c | c | c | c | c | c | c |}
\hline \bf  k $\backslash$ n  & \bf 1 & \bf 2 & \bf 3 & \bf 4 & \bf 5 & \bf 6 & \bf 7 \\ \hline \hline
\bf 1 &  1 &  2 &  3 &  4 &  5 & 6 & 7 \\ \hline
\bf 2 &   &  8 & 37 & 145 & 452 & 1374 & 3933 \\ \hline
\bf 3 &   &   &  113 &  1257 &  9020 & 60826 & 356023 \\ \hline
\bf 4 &   &   &   &  3492 &  67394 & 938194 & 30492722 \\ \hline
\bf 5 &   &   &   &  &  183732 & 6398792 & 466578957 \\ \hline
\bf 6 &   &   &   &  &  &  17061118 & 3032145644 \\ \hline
\bf 7 &   &   &   &  &  &   &    7743056064 \\ \hline 
\end{tabular}
\caption{$|T_{n,k}|$ for $1 \le k \le n \le 7$.}
\end{table}

Again recall from Lemma \ref{ascending-lemma}(\ref{asc-lemma-equality}) that the diagonal in Table 1 
is equal to $m(n)$, see sequence \href{http://oeis.org/A023814}{A023814} in OEIS \cite{oeis}. 

See \cite{code} for the code used to calculate these values. This ran on the Iridis 4 compute cluster \cite{iridis} and took one hour running on 64 x Intel Xeon E5-2670 processor cores, equivalent to approximately 64 hours on a standard desktop computer.

That ascendingly generated Cayley tables are important is revealed in the next result.

\medskip

A congruence $\rho \in $ Cong$_n(FS_r)$ is completely determined by two pieces of information: an ascendingly generated Cayley table and by which congruence classes the generators $X_r$ are assigned to. We formalise this below.

We say a function $f:X_r \to W_n$ is an {\bf assignment} if $f(g_1)=w_1$ and $f(g_j) \in \{w_1,\dots,w_{\alpha(j)+1}\}$ where $w_{\alpha(j)}=\max_{i < j}\{f(g_i)\}$ for all $2 \le j \le n$. Let $A_{r,k}$ be the set of assignments from $X_{r}$ to $W_{n}$ such that the image has cardinality $k$.

We claim that
\begin{propn} There exists a bijection
\[
\phi:\text{Cong}_n(FS_r) \to \bigsqcup_{1 \le k \le r}(T_{n,k} \times A_{r,k}).
\]
\end{propn}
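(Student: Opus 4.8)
The plan is to exhibit explicit maps in both directions and check that they are mutually inverse, using the fact that a congruence with $n$ classes is the same data as an epimorphism of $FS_r$ onto a semigroup of order $n$. Given $\rho \in \text{Cong}_n(FS_r)$, consider the quotient epimorphism $\pi_\rho : FS_r \to S_\rho := FS_r/\rho$, where $S_\rho$ has order $n$ and is generated by the images $\pi_\rho(g_1),\dots,\pi_\rho(g_r)$. The idea is to transport the multiplication of $S_\rho$ onto the standard set $W_n$ along a canonical labelling $\lambda_\rho : S_\rho \to W_n$ and to record simultaneously where the generators land. I would build $\lambda_\rho$ in two stages. First, label the \emph{distinct} generator images in order of first appearance: set $\lambda_\rho(\pi_\rho(g_1)) = w_1$, and each time scanning $g_2,\dots,g_r$ produces a previously unseen image, assign it the next available $w_i$; this exhausts the generator images as $w_1,\dots,w_k$, where $k$ is their number. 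Second, label the remaining (``non-generator'') elements as $w_{k+1} < \dots < w_n$ in increasing order of their minimal decomposition $\text{dec}(\cdot)$ over the alphabet $\{w_1,\dots,w_k\}$, using the order $<_\otimes$ defined before the statement. I then set $\phi(\rho) = (T_\rho, f_\rho)$, where $T_\rho = (W_n,\otimes)$ is the Cayley table obtained by transporting the multiplication of $S_\rho$ along $\lambda_\rho$, and $f_\rho = \lambda_\rho \circ \pi_\rho|_{X_r}$ is the induced assignment.

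Next I would check that $\phi$ lands in the correct component $T_{n,k}\times A_{r,k}$. The first stage forces $f_\rho(g_1)=w_1$ and the ``maximum-so-far'' bound, so $f_\rho \in A_{r,k}$. For the table, the key observation is that each letter of the alphabet $\{w_1,\dots,w_k\}$ has minimal decomposition equal to itself (length $1$), whereas every non-generator has minimal decomposition of length $\ge 2$; moreover distinct elements have distinct minimal decompositions, since a decomposition word evaluates to a unique element. Hence the second stage defines a genuine total order on the non-generators and produces $T_\rho \in T_{n,k}$ satisfying $\text{dec}(w_{k+1}) <_\otimes \dots <_\otimes \text{dec}(w_n)$, so $\phi(\rho) \in T_{n,k}\times A_{r,k}$.

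For the inverse, given $(T,f) \in T_{n,k}\times A_{r,k}$ with $T = (W_n,\otimes)$, the assignment axioms force the image of $f$ to be exactly the initial segment $\{w_1,\dots,w_k\}$, which generates $(W_n,\otimes)$; so there is a unique homomorphism $\pi_{T,f} : FS_r \to (W_n,\otimes)$ with $\pi_{T,f}(g_i) = f(g_i)$, and it is surjective onto a semigroup of order $n$. Setting $\psi(T,f) = \ker \pi_{T,f}$ yields a congruence with exactly $n$ classes. The composite $\psi\circ\phi$ is the identity because $\pi_{T_\rho,f_\rho}$ and $\lambda_\rho\circ\pi_\rho$ agree on $X_r$, hence everywhere, so (as $\lambda_\rho$ is a bijection) both have kernel $\rho$. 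For $\phi\circ\psi$, the point is that the two labelling stages are \emph{forced}: the assignment axioms say $f$ already lists the distinct generator images in first-appearance order, and ``ascendingly generated'' says the non-generators of $T$ are already sorted by decomposition order, so the canonical labelling of $FS_r/\ker\pi_{T,f}$ reproduces the labelling of $W_n$ verbatim, giving $\phi(\psi(T,f)) = (T,f)$.

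I expect the main obstacle to be the $\phi\circ\psi = \text{id}$ direction, specifically proving that the canonical labelling is genuinely \emph{forced} rather than merely consistent with the given one. This is exactly where the assignment axioms and the ascending-generation condition must be invoked, and where care is needed to confirm that the generators (decomposition length $1$) and non-generators (length $\ge 2$) interleave correctly with respect to both the order $<$ on $W_n$ and the order $<_\otimes$ on decompositions, so that no alternative labelling of the same abstract structure could also be ascending.
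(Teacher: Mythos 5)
Your proposal is correct, and the steps you flag as delicate do all go through; the overall skeleton (map a congruence to a pair $(\text{table},\text{assignment})$, invert by pulling back along the evaluation homomorphism, check the two round trips) matches the paper's, which packages the round trips instead as well-definedness, surjectivity and injectivity of $\phi$. The genuine difference is where the technical weight sits. The paper realizes $W_n$ concretely inside $FS_r$ as the shortlex-minimal representatives of the $\rho$-classes, so membership of the resulting pair in $T_{n,k}\times A_{r,k}$ is a \emph{theorem} about those representatives: the key computation is that the minimal decomposition $\text{dec}(w_i)=w_{\gamma(1)}\otimes\dots\otimes w_{\gamma(m)}$, concatenated in $FS_r$, equals the representative $w_i$ itself, whence $w_i<w_j$ if and only if $\text{dec}(w_i)<_\otimes\text{dec}(w_j)$ and the shortlex labelling is automatically ascending. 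You instead label abstractly --- generator images by first appearance, non-generators sorted by $\text{dec}$ --- so that $f_\rho\in A_{r,k}$ and $T_\rho\in T_{n,k}$ hold by construction, and the burden shifts, exactly as you predict, to the forcedness claim in $\phi\circ\psi=\mathrm{id}$. That claim is sound: the assignment axioms force the image of $f$ to be the initial segment $\{w_1,\dots,w_k\}$ listed in first-appearance order, so the induced isomorphism $FS_r/\ker\pi_{T,f}\to(W_n,\otimes)$ agrees with your stage-one labels; $\text{dec}$ is then intrinsic to the pair (table, ordered alphabet), and the ascending condition says $w_{k+1},\dots,w_n$ are already sorted by it, so your canonical labelling reproduces the given one verbatim. (In fact your bijection coincides with the paper's map: each generator class has a single letter as its shortlex-minimal representative, these letters occur in first-appearance order, and the paper's concatenation lemma shows shortlex order on the remaining representatives agrees with $\text{dec}$-order.) What each route buys: your universal-property framing makes well-definedness, the count of exactly $n$ classes, and injectivity essentially automatic ($\ker\pi_{T,f}$ has $n$ classes because $\pi_{T,f}$ is onto an $n$-element semigroup, and $\psi\circ\phi=\mathrm{id}$ replaces the paper's separate injectivity check), while the paper's concrete version additionally identifies the abstract labels with explicit shortlex normal forms in $FS_r$, which is what makes the correspondence computable and is reused in its surjectivity verification. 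One point to make explicit if you write this up: invoking the unique homomorphism $FS_r\to(W_n,\otimes)$ extending $f$ uses that the tables in $T_{n,k}$ are associative; this matches the paper's intended convention (its ``Cayley tables'' are the associative operations counted by $m(n)$), but since the text says ``all binary operations'' it deserves a sentence.
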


\begin{proof}
Given any $\rho \in $ Cong$_n(FS_r)$ we choose a subset $W_n \subseteq FS_r$ in the following way: let $w_1 < w_2 < \dots < w_n$ be the minimal representatives of the $n$ $\rho$-classes with respect to $<$, the shortlex order on $FS_r$ over the alphabet $X_r$. Given any word $w \in FS_r$, let $[w]_\rho \in W_n$ denote the minimal representative of the class of $w$. Let $k=|\{[g_1]_\rho,[g_2]_\rho,\dots,[g_r]_\rho\}|$ be the number of classes that the generators $X_r$ traverse.

Let $f$ be defined as $f(g_i):=[g_i]_\rho$ for all $1 \le i \le r$. Clearly $f(g_1)=w_1$ and for any $j>1$, either $g_j$ is in the same $\rho$-class as some $g_i$ for $i < j$ in which case $f(g_j)=f(g_i)$, or it is in a different class from all $g_i$ with $i<j$ in which case $f(g_j)=w_{\alpha(j)+1}$ where $w_{\alpha(j)}=\max_{i<j}\{f(g_i)\}$. Hence $f$ is indeed an assignment and $f \in A_{r,k}$. We now define a binary operation on $W_{n}$, which can be done by setting $w_i \otimes w_j:=[w_iw_j]_\rho$ for all $1 \le i,j \le n$. This is associative as $\rho$ is a congruence. We now show that $(W_n,\otimes)$ is generated by $\{w_1,\dots,w_k\}$. In fact, given any $w \in W_n$, let $w=g_{\alpha(1)}g_{\alpha(2)}\dots g_{\alpha(m)}$ be its unique decomposition in $FS_r$, then $w_i=[w_i]_\rho=[g_{\alpha(1)}g_{\alpha(2)}\dots g_{\alpha(m)}]_\rho=[g_{\alpha(1)}]_\rho\otimes[g_{\alpha(2)}]_\rho\otimes\dots\otimes[g_{\alpha(m)}]_\rho$, where $[g_{\alpha(1)}]_\rho,[g_{\alpha(2)}]_\rho,\dots,[g_{\alpha(m)}]_\rho \in $ im$(f)=\{w_1,\dots,w_k\}$. Hence $(W_n,\otimes) \in C_{n,k}$.

Given any $w_i \in W_n$, let dec$(w_i)=w_{\gamma(1)}\otimes\dots\otimes w_{\gamma(m)}$ be the minimal decomposition of $w_i$. As $w_{i}$ belongs to $FS_{r}$, we also have its unique decomposition $w_i=g_{\alpha(1)}\dots g_{\alpha(l)}$ using letters in $X_{r}$.  We intend to show that $g_{\alpha(1)}\dots g_{\alpha(l)}=w_{\gamma(1)}\dots w_{\gamma(m)}$. To do this, we make two initial observations:
\begin{enumerate}
\item $[g_{\alpha(1)}]_\rho\dots[g_{\alpha(l)}]_\rho\le g_{\alpha(1)}\dots g_{\alpha(l)}$;
\item $[g_{\alpha(1)}]_\rho\dots[g_{\alpha(l)}]_\rho$ is in the same class as $w_i$, which is minimal in its class.
\end{enumerate}
We can immediately deduce that $[g_{\alpha(1)}]_\rho\dots[g_{\alpha(l)}]_\rho=g_{\alpha(1)}\dots g_{\alpha(l)}$ and $g_{\alpha(1)},\dots,g_{\alpha(l)} \in W_n$. As dec$(w_{i})$ is the smallest decomposition of $w_{i}$ in $W_n$ we also have $w_{\gamma(1)}\otimes\dots\otimes w_{\gamma(m)} \le_\otimes g_{\alpha(1)}\otimes\dots\otimes g_{\alpha(l)}$. Assume that $w_{\gamma(1)}\dots w_{\gamma(m)}\neq g_{\alpha(1)}\dots g_{\alpha(l)}$, then $w_{\gamma(1)}\dots w_{\gamma(m)}<w_i$ but in the same $\rho$-class as $w_i$ which is a contradiction. Hence $g_{\alpha(1)}\dots g_{\alpha(l)}=w_{\gamma(1)}\dots w_{\gamma(m)}$ and $w_i < w_j$ if and only if dec$(w_i) <_\otimes $ dec$(w_j)$ thus $(W_n,\otimes) \in T_{n,k}$. Let $\phi(\rho):=((W_n,\otimes),f)$, it is clear that if $\rho=\sigma \in $ Cong$_n(FS_r)$ then $\phi(\rho)=\phi(\sigma)$, whence $\phi$ is a well-defined function.

Now we prove surjectivity of $\phi$ by constructing a congruence: given any $((W_n,\otimes),f) \in \bigsqcup_{1 \le k \le r}(T_{n,k} \times A_{r,k})$ we define $\rho$ as follows: let $(a,b) \in \rho$ if and only if $f(g_{\alpha(1)})\otimes \dots \otimes f(g_{\alpha(l)})=f(g_{\beta(1)})\otimes \dots \otimes f(g_{\beta(m)})$ where $a=g_{\alpha(1)}\dots g_{\alpha(l)}$ and $b=g_{\beta(1)}\dots g_{\beta(m)}$ are their unique decompositions in $FS_r$. It is straightforward to check that this is indeed a congruence. We now show that $\rho$ has $n$ congruence classes. The generators $X_r$ clearly traverse $|$im$(f)|=k$ classes. For each $w_i \in \{w_{k+1},\dots,w_n\}$ let dec$(w_i)=w_{\gamma(1)}\otimes \dots \otimes w_{\gamma(l)}$. Then $w_{\gamma(1)}\dots w_{\gamma(k)}$ is in a different class from all $\{w_1,\dots,w_{i-1}\}$ and therefore $\rho \in $ Cong$_n(FS_r)$. 

Let $\phi(\rho)=((W_n,\oplus),f')$. We intend to show that $((W_n,\otimes),f)=((W_n,\oplus),f')$. Firstly note that $f(g_1)=w_1=f'(g_1)$. We now proceed by induction: given any $j>1$, assume $f(g_i)=f'(g_i)$ for all $i<j$. Then either $f(g_j)=f(g_i)$ for some $i<j$ in which case $f'(g_j)=[g_j]_\rho=[g_i]_\rho=f'(g_i)=f(g_i)=f(g_j)$ or, alternatively, $f(g_j)$ is different from all $f(g_i)$ with $i<j$. In which case $f(g_j)=w_{\alpha(j)+1}$ and $f'(g_j)=w_{\beta(j)+1}$ where $w_{\alpha(j)}=\max_{i<j}\{f(g_i)\}$ and $w_{\beta(j)}=\max_{i<j}\{f'(g_i)\}$. By our assumption, $w_{\alpha(j)}=w_{\beta(j)}$ and so $f=f'$.

We now intend to show that $(W_n,\otimes)=(W_n,\oplus)$. Given any $w_i,w_j \in W_n$, with dec$(w_i)=w_{\gamma(1)}\otimes\dots\otimes w_{\gamma(l)}$, dec$(w_j)=w_{\delta(1)}\otimes\dots\otimes w_{\delta(m)}$, let $w_p=w_i \otimes w_j$ where dec$(w_p)=w_{\epsilon(1)}\otimes \dots \otimes w_{\epsilon(q)}$. By the argument above we know that $w_i=w_{\gamma(1)}\dots w_{\gamma(l)}$, $w_j=w_{\delta(1)}\dots w_{\delta(m)}$, $w_p=w_{\epsilon(1)}\dots w_{\epsilon(q)}$ are also their unique deompositions in $FS_r$ as the minimal representatives of the $\rho$-classes. Hence $w_i \oplus w_j=[w_iw_j]_\rho=[w_p]_\rho=w_p=w_i \otimes w_j$. So $(W_n,\otimes)=(W_n,\oplus)$ and hence $\phi$ is surjective.

Finally, we show that $\phi$ is injective: consider $\phi(\rho)=((W_n,\otimes),f)=\phi(\sigma)$ for some $\rho,\sigma \in $ Cong$_n(FS_r)$. Then $(a,b) \in \rho$ if and only if $f(g_{\alpha(1)})\otimes \dots \otimes f(g_{\alpha(l)})=f(g_{\beta(1)})\otimes \dots \otimes f(g_{\beta(m)})$ where $a=g_{\alpha(1)}\dots g_{\alpha(l)}$ and $b=g_{\beta(1)}\dots g_{\beta(m)}$ if and only if $(a,b) \in \sigma$ and $\phi$ is a bijection.
\end{proof}

Note that $|A_{r,k}|$ is precisely the number of ways of partitioning an $r$-element set in to $k$ non-empty subsets, that is ${r \brace k}$ the Stirling numbers of the second kind. Hence we have proved the following,

\begin{thm}
\[
a_n^C(FS_r)=\sum_{k=1}^r{r \brace k}|T_{n,k}| .
\]
\end{thm}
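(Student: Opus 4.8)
The plan is to read off the identity directly from the bijection $\phi$ constructed in the previous proposition, so that the theorem becomes a pure cardinality computation. Since $\phi \colon \mathrm{Cong}_n(FS_r) \to \bigsqcup_{1 \le k \le r}(T_{n,k} \times A_{r,k})$ is a bijection, the two sets have the same cardinality. Taking cardinalities converts the disjoint union into a sum and each Cartesian product into a product of cardinalities, giving
\[
a_n^C(FS_r) = |\mathrm{Cong}_n(FS_r)| = \sum_{k=1}^r |T_{n,k} \times A_{r,k}| = \sum_{k=1}^r |T_{n,k}| \cdot |A_{r,k}|.
\]

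It then remains only to identify $|A_{r,k}|$, which is the single step I would actually spell out since everything else is formal. An assignment $f \colon X_r \to W_n$ is determined by the value sequence $(f(g_1),\dots,f(g_r))$, which by definition begins at $w_1$ and at each later step $j$ is permitted to repeat an already-used value or to introduce exactly the next unused element $w_{\alpha(j)+1}$; this is precisely the restricted-growth condition. Thus the map sending an assignment $f$ to the partition $\{f^{-1}(w) : w \in \mathrm{im}(f)\}$ of $X_r$ into its fibres is a bijection from $A_{r,k}$ onto the set of partitions of the $r$-element set $X_r$ into $k$ non-empty blocks, and hence $|A_{r,k}| = {r \brace k}$, the Stirling number of the second kind. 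Substituting this into the displayed sum yields
\[
a_n^C(FS_r) = \sum_{k=1}^r {r \brace k}\, |T_{n,k}|,
\]
as required.

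There is essentially no obstacle at this stage: all of the substance lives in the previous proposition (the construction of $\phi$ and the verification that it is a bijection) and in the standard restricted-growth-string encoding of set partitions. The only point meriting a line of care is confirming that the condition $f(g_j) \in \{w_1,\dots,w_{\alpha(j)+1}\}$ imposes exactly the restricted-growth condition and nothing more, so that the fibre-partition map hits every partition into $k$ blocks and is injective; but this is immediate from the definition of an assignment, so the proof reduces to the two displayed lines together with this brief combinatorial identification.
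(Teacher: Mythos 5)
Your proof is correct and follows essentially the same route as the paper, whose entire argument consists of reading the identity off the bijection $\phi$ and observing that $|A_{r,k}|={r \brace k}$ because assignments are exactly the restricted-growth encodings of partitions of $X_r$ into $k$ non-empty blocks. Your explicit fibre-partition verification merely spells out in slightly more detail the one combinatorial step the paper states without proof.
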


Hence from the values of $|T_{n,k}|$ in Table \ref{gap-tables} we can calculate $a_n^C(FS_r)$ for $1 \le n \le 7$ which is presented in Appendix \ref{app:congruence-values}.

\begin{cor}
For a fixed rank $r$, $a_n^C(FS_r) \asymp |T_{n,r}|$.
\end{cor}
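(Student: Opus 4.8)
The plan is to combine the exact formula
\[
a_n^C(FS_r)=\sum_{k=1}^r {r \brace k}\,|T_{n,k}|
\]
just established with the strict-type notion (comparison of logarithms under $\asymp$). Since $r$ is fixed, the goal is to show $\log(s_n^C(FS_r)) \asymp \log(s_n^{T}(n))$ where I abbreviate the partial sums of the relevant quantity; more precisely, since all the Stirling coefficients ${r \brace k}$ are fixed positive constants (independent of $n$), the sum is sandwiched between a single term and a constant multiple of the largest term. The key structural fact I would exploit is the inclusion $T_{n,k}\subseteq T_{n,k+1}$ from Lemma~\ref{ascending-lemma}(\ref{ascending-included}), which shows that $|T_{n,k}|$ is non-decreasing in $k$, so the $k=r$ term dominates.

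First I would establish the lower bound. Picking out only the $k=r$ summand and using ${r \brace r}=1$ gives
\[
a_n^C(FS_r)=\sum_{k=1}^r {r \brace k}\,|T_{n,k}| \ge {r \brace r}\,|T_{n,r}| = |T_{n,r}|,
\]
so immediately $\log(a_n^C(FS_r)) \ge \log(|T_{n,r}|)$, hence $\log(|T_{n,r}|)=O(\log(s_n^C(FS_r)))$.

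Second, for the upper bound, I would use the monotonicity $|T_{n,k}|\le |T_{n,r}|$ for all $k \le r$ together with the fact that $C_r:=\sum_{k=1}^r {r \brace k}$ (the $r$th Bell number) is a constant not depending on $n$. This yields
\[
a_n^C(FS_r)=\sum_{k=1}^r {r \brace k}\,|T_{n,k}| \le \Bigl(\sum_{k=1}^r {r \brace k}\Bigr)|T_{n,r}| = C_r\,|T_{n,r}|.
\]
Passing to partial sums and using that $|T_{n,r}|$ is non-decreasing in $n$ (which follows since $f(n,r)$ is non-decreasing and the bounds of Lemma~\ref{ascending-lemma}(\ref{T-n-r-bounds}) sandwich $|T_{n,r}|$), I get $s_n^C(FS_r) \le n\,C_r\,|T_{n,r}|$, and therefore
\[
\log(s_n^C(FS_r)) \le \log(n) + \log(C_r) + \log(|T_{n,r}|) = O(\log(|T_{n,r}|)).
\]
Combining the two directions gives $\log(s_n^C(FS_r)) \asymp \log(|T_{n,r}|)$, which is the assertion $a_n^C(FS_r)\asymp |T_{n,r}|$ in the strict-type sense.

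I do not expect a genuine obstacle here: the argument is a routine sandwiching exactly parallel to the passage from $a_n$ to $s_n$ in Theorem~\ref{cong-strict-growth-type}. The only point requiring a little care is confirming that $|T_{n,r}|$ is non-decreasing in $n$ so that $s_n^C$ can be bounded by $n$ times its $n$th term; this is the analogue of the monotonicity step used earlier and follows from Lemma~\ref{ascending-lemma}(\ref{T-n-r-bounds}) since both bounding quantities grow with $n$. One should also note that the statement is about $\asymp$ of the quantities themselves (not their logarithms), so strictly the corollary as phrased asserts $a_n^C(FS_r)=O(|T_{n,r}|)$ and $|T_{n,r}|=O(a_n^C(FS_r))$, both of which fall out directly from the two displayed inequalities above with $C=\max\{1,C_r\}$, without even needing to pass through logarithms.
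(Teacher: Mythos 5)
Your concluding observation is the paper's proof verbatim: the lower bound $a_n^C(FS_r) \ge {r \brace r}\,|T_{n,r}| = |T_{n,r}|$ together with the upper bound $a_n^C(FS_r) \le B(r)\,|T_{n,r}|$ from Lemma \ref{ascending-lemma}(\ref{ascending-included}), with the Bell number $B(r)$ a constant since $r$ is fixed, and since the paper's $\asymp$ is defined directly on the sequences this already finishes the argument. Your detour through partial sums and logarithms is unnecessary (and its justification that $|T_{n,r}|$ is non-decreasing in $n$ does not follow from being sandwiched between non-decreasing quantities), but as you yourself note at the end, that detour can simply be discarded.
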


\begin{proof}
Clearly $a_n^C(FS_r) \ge |T_{n,r}|$. From Lemma \ref{ascending-lemma}(\ref{ascending-included}) $|T_{n,k}| \le |T_{n,r}|$ for all $k \le r$, hence $a_n^C(FS_r) \le B(r) \cdot |T_{n,r}|$ where $B(r)=\sum_{k=1}^r {r \brace k}$ are the Bell numbers.
\end{proof}

Compare this result to Theorem \ref{cong-strict-growth-type} and notice that this is much stronger than saying they have the same strict growth type.

\subsection{Congruence growth for a fixed number of classes}

Given some fixed $n$ we now prove that $a_n^C(FS_r)$ satisfies an exponential equation with base $n$.

\begin{thm}
\[
a_n^C(FS_r)=\sum_{j=1}^n\left(\left(\sum_{k=j}^n(-1)^{k-j}{k \choose j}\frac{|T_{n,k}|}{k!}\right)j^r\right).
\]
\end{thm}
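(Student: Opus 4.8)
The plan is to start from the formula $a_n^C(FS_r)=\sum_{k=1}^r{r \brace k}|T_{n,k}|$ established just above, and to replace the Stirling numbers of the second kind by their explicit inclusion--exclusion expression, thereby exposing the dependence on $r$ as a combination of pure powers $j^r$. Everything reduces to a substitution followed by an exchange of two finite summations.

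First I would reconcile the upper limit of summation, since the target sums $k$ only up to $n$ whereas the source sums up to $r$. The claim is that $\sum_{k=1}^r{r \brace k}|T_{n,k}|=\sum_{k=1}^n{r \brace k}|T_{n,k}|$ for every $r$, and I would justify this in two cases. If $r\le n$, the extra terms $k=r+1,\dots,n$ vanish because ${r \brace k}=0$ whenever $k>r$. If $r>n$, the terms $k=n+1,\dots,r$ carry no content: every assignment $f:X_r\to W_n$ has image contained in the $n$-element set $W_n$, so its image cardinality is at most $n$ and hence $A_{r,k}=\emptyset$ (equivalently, $T_{n,k}\times A_{r,k}$ is empty) for $k>n$. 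Thus the sum effectively runs over $1\le k\le \min(r,n)$ and may uniformly be written with upper limit $n$, independently of how $r$ compares to $n$.

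Next I would substitute the classical identity
\[
{r \brace k}=\frac{1}{k!}\sum_{j=0}^{k}(-1)^{k-j}\binom{k}{j}j^r,
\]
which counts surjections from an $r$-element set onto a $k$-element set (divided by $k!$) via inclusion--exclusion. This yields
\[
a_n^C(FS_r)=\sum_{k=1}^n\frac{|T_{n,k}|}{k!}\sum_{j=0}^{k}(-1)^{k-j}\binom{k}{j}j^r.
\]
Because $r\ge 1$, the $j=0$ term contributes $0^r=0$ and may be dropped. I would then interchange the two finite sums: the index region $\{(k,j):1\le k\le n,\ 1\le j\le k\}$ coincides with $\{(k,j):1\le j\le n,\ j\le k\le n\}$, so summing over $j$ first gives
\[
a_n^C(FS_r)=\sum_{j=1}^n\left(\left(\sum_{k=j}^n(-1)^{k-j}\binom{k}{j}\frac{|T_{n,k}|}{k!}\right)j^r\right),
\]
which is precisely the claimed expression and is manifestly an exponential equation in $r$ with bases $1,2,\dots,n$.

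The only genuinely delicate step is the bookkeeping of the summation range in the second paragraph; everything after it is the routine substitution of a standard Stirling identity together with a Fubini-style exchange of finite sums. I would therefore take care to treat both cases $r\le n$ and $r>n$ explicitly, since this reduction to a single upper limit $n$ is exactly what makes the final formula one expression valid for all $r$.
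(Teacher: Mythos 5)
Your proposal is correct and follows essentially the same route as the paper: starting from $a_n^C(FS_r)=\sum_{k=1}^r{r \brace k}|T_{n,k}|$, substituting the inclusion--exclusion expansion ${r \brace k}=\frac{1}{k!}\sum_{j}(-1)^{k-j}\binom{k}{j}j^r$, truncating the summation at $n$, and exchanging the order of the two finite sums. The only difference is cosmetic bookkeeping: where the paper simply invokes $|T_{n,k}|=0$ for $k>n$ and calls the rest ``a simple rearrangement,'' you justify the truncation via $A_{r,k}=\emptyset$ for $k>n$ and spell out the range exchange explicitly, which is the same observation made more carefully.
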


\begin{proof}
\[
a_n^C(FS_r)=\sum_{k=1}^r {r \brace k}|T_{n,k}|=\sum_{k=1}^r\left(\sum_{j=1}^k\frac{(-1)^{k-j}}{k!}{k \choose j}j^r\right)|T_{n,k}|
\]
since $|T_{n,k}|=0$ for all $k>n$,  the result follows by a simple rearrangement.
\end{proof}

Using Table \ref{gap-tables}, we calculate the exponential equations for $1 \le n \le 7$ and present them in Appendix \ref{app:congruence-values}.

\begin{cor}
For a fixed $n$, we have $a_n^C(FS_r) \sim \frac{m(n)}{n!}n^r$.
\end{cor}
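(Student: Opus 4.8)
The plan is to read off the asymptotics directly from the exponential equation established in the preceding Theorem. That result writes $a_n^C(FS_r)$ as a linear combination $\sum_{j=1}^n c_{n,j}\,j^r$ of the functions $j^r$, where the base $j$ ranges over $1,\dots,n$ and the coefficient $c_{n,j}=\sum_{k=j}^n(-1)^{k-j}\binom{k}{j}\frac{|T_{n,k}|}{k!}$ is independent of $r$. Since $n$ is held fixed while $r\to\infty$, the term of largest base dominates everything else: for any $j<n$ we have $(j/n)^r\to 0$, so $j^r=o(n^r)$. Hence the asymptotic behaviour is governed entirely by the $j=n$ term, and the whole argument reduces to identifying its coefficient.

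First I would compute the leading coefficient $c_{n,n}$. In the inner sum defining $c_{n,j}$ the index $k$ runs from $j$ up to $n$, so when $j=n$ only the single term $k=n$ survives, giving $c_{n,n}=(-1)^0\binom{n}{n}\frac{|T_{n,n}|}{n!}=\frac{|T_{n,n}|}{n!}$. By Lemma \ref{ascending-lemma}(\ref{asc-lemma-equality}) we have $|T_{n,n}|=m(n)$, so $c_{n,n}=\frac{m(n)}{n!}$. Crucially this is strictly positive, since there is at least one Cayley table of order $n$ (e.g.\ a left-zero multiplication is associative), whence $m(n)\ge 1$ and the leading term genuinely does not vanish.

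Next I would divide through by this leading term. Writing $a_n^C(FS_r)=\frac{m(n)}{n!}n^r+\sum_{j=1}^{n-1}c_{n,j}\,j^r$, we obtain
\[
\frac{a_n^C(FS_r)}{\frac{m(n)}{n!}n^r}=1+\frac{n!}{m(n)}\sum_{j=1}^{n-1}c_{n,j}\left(\frac{j}{n}\right)^r.
\]
Each summand on the right tends to $0$ as $r\to\infty$ because $j/n<1$, and since $n$ is fixed the sum is finite with $r$-independent coefficients $c_{n,j}$; therefore the entire correction term vanishes in the limit. Consequently the ratio tends to $1$, which is precisely the assertion $a_n^C(FS_r)\sim\frac{m(n)}{n!}n^r$.

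There is essentially no real obstacle here: the only points to verify are the bookkeeping that collapses the inner sum to its $k=n$ term and the nonvanishing $m(n)\ge 1$ of the leading coefficient, both immediate. The only mild care required is to record that the $c_{n,j}$ do not depend on $r$, so that the finite tail $\sum_{j<n}c_{n,j}j^r$ is genuinely $o(n^r)$ rather than merely a sum of individually small terms.
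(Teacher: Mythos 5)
Your proof is correct and is precisely the argument the paper intends: the corollary is stated as an immediate consequence of the preceding theorem, with the base-$n$ term dominating as $r \to \infty$ and its coefficient collapsing to $\frac{|T_{n,n}|}{n!}=\frac{m(n)}{n!}$ via Lemma \ref{ascending-lemma}(\ref{asc-lemma-equality}). Your extra bookkeeping (the $r$-independence of the coefficients and the nonvanishing $m(n)\ge 1$) fills in exactly the details the paper leaves implicit.
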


\section{Further work}

There are many open questions regarding subsemigroup growth. It was shown in \cite[Theorem 3.1]{MR1978431} that all groups with superexponential subgroup growth are similar to free groups, in that they involve every finite group as an upper section. What necessary conditions are imposed on semigroups? In answering this question, it would be of interest to first investigate other classes of semigroups. What is the subsemigroup growth of free commutative semigroups, free inverse semigroups, the bicyclic monoid etc? For example, if free commutative semigroups have exponential subsemigroup growth, then by an argument similar to Proposition \ref{free-max-growth}, non-commutativity would certainly be a necessary condition for superexponential growth.

How does the geometry of the semigroup relate to its subsemigroup growth, for example, is superexponential growth connected to hyperbolicity?

It may also be that another definition of index is more appropriate. One particularly natural definition that might be considered is Green index \cite{MR2450719} which agrees with Rees index for free semigroups.

It may also be of interest to consider counting other objects, e.g. right/left congruences (related to counting cyclic acts), maximal sub(semi)groups, classes related to Green's relations etc.

\subsection*{Acknowledgments}
The authors acknowledge the use of the IRIDIS High Performance Computing Facility, and associated support services at the University of Southampton, in the completion of this work. The authors would also like to thank Ian Leary and Alistair Wallis for helpful conversations regarding this paper.

\bibliographystyle{plain}
\bibliography{ssgref.bib}

\newpage

\appendix

\begin{landscape}

\section{Values of $a_n(FS_r)$} \label{app:subsemigroup-values}

\bigskip

\begin{tabular}{| c || c | c | c | c | c | c | c | c | c |}
\hline \bf  r $\backslash$ n  & \bf 1 & \bf 2 & \bf 3 & \bf 4 & \bf 5 & \bf 6 & \bf 7 & \bf 8 & \bf 9\\ \hline \hline
\bf 1 &  1 &  2 &  4 &  7 &  12 &  23 &   39 & 67 & 118 \\ \hline
\bf 2 &  2 &  11 &  62 &   382 &  2562 &  18413 &  140968 & 1142004 & 9745298 \\ \hline
\bf 3 &  3 &  27 &  250 &  2568 &  28746 &  347691 &  4495983 & 61714968 & 894242997  \\ \hline
\bf 4 &  4 &  50 &  644 &  9209 &  143416 &  2415078 &  43532832 & 833734416 & 16863679508 \\ \hline
\bf 5 &  5 &  80 &  1320 &  24150 &  480736 &  10340800 &  238120365 & 5826981430 & 150609007570 \\ \hline
\bf 6 &  6 &  117 &  2354 &  52437 &  1269738 &  33192442 &  928558122 & 27600653310 & 866466783828 \\ \hline
\bf 7 &  7 &  161 &  3822 &  100317 &  2859878 &  87935351 &  2892046165 & 101031525714 & 3726895105059 \\ \hline
\bf 8 &  8 &  212 &  5800 &  175238 &  5746592 &  203079088 &  7672012360 & 307755240801 & 13032655134280 \\ \hline
\bf 9 & 9 & 270 & 8364 & 285849 & 10596852 & 423019929 & 18042714315 & 816825050010 & 39027404931886 \\ \hline
\bf 10 & 10 & 335 & 11590 & 442000 & 18274722 & 813079415 & 38632533180 & 1947580054285 & 103592924112830 \\ \hline
\bf 11 & 11 & 407 & 15554 & 654742 & 29866914 & 1465238951 & 76729376515 & 4261622698733 & 249671899238553 \\ \hline
\bf 12 & 12 & 486 & 20332 & 936327 & 46708344 & 2504570454 & 143291607432 & 8692072992879 & 556011110821900 \\ \hline

\end{tabular}

\begin{minipage}{0.3\textwidth}
\begin{align*}
a_1(FS_r)  = &r &\\
a_3(FS_r)  = & \frac{38}{3}r^3-11r^2+\frac{7}{3}r &
\end{align*}
\end{minipage} \hfill
\begin{minipage}{0.7\textwidth}
\begin{align*}
a_2(FS_r)   = & \frac{7}{2}r^2-\frac{3}{2}r & \\
a_4(FS_r)   = & \frac{1201}{24}r^4-\frac{239}{4}r^3+ \frac{311}{24}r^2+\frac{15}{4}r &
\end{align*}
\end{minipage}
\vspace{-0.3cm}\begin{align*}
a_5(FS_r)  = & \frac{6389}{30}r^5-\frac{613}{2}r^4+\frac{185}{6}r^3+\frac{255}{2}r^2-\frac{264}{5}r& \\
a_6(FS_r)  = & \frac{696049}{720}r^6-\frac{72727}{48}r^5-\frac{58627}{144}r^4+\frac{33101}{16}r^3-\frac{509257}{360}r^2+\frac{973}{3}r&\\
a_7(FS_r)  = & \frac{11708603}{2520}r^7-\frac{87143}{12}r^6-\frac{146903}{18}r^5+\frac{54431}{2}r^4-\frac{9126049}{360}r^3+\frac{129725}{12}r^2-\frac{13019}{7}r&\\
a_8(FS_r)  = & \frac{947714177}{40320}r^8-\frac{5336487}{160}r^7-\frac{55786441}{576}r^6+\frac{7419257}{24}r^5-\frac{2105526961}{5760}r^4+\frac{110385341}{480}r^3 -\frac{52875299}{672}r^2+\frac{95103}{8}r &\\
a_9(FS_r)  = &\frac{5649947729}{45360}r^9-\frac{78967849}{560}r^8-\frac{1039050691}{1080}r^7+\frac{142822454}{45}r^6-\frac{9770306269}{2160}r^5+\frac{2708660903}{720}r^4-\frac{44177206909}{22680}r^3 \\
&+\frac{378138079}{630}r^2-\frac{776555}{9}r&
\end{align*}

\section{Values of $a_n^I(FS_r)$} \label{app:ideal-values}

\bigskip

\begin{tabular}{| c || c | c | c | c | c | c | c | c | c | c | c | c |}
\hline \bf  r $\backslash$ n  & \bf 1 & \bf 2 & \bf 3 & \bf 4 & \bf 5 & \bf 6 & \bf 7 & \bf 8 & \bf 9 & \bf 10 & \bf 11 & \bf 12 \\ \hline \hline
\bf 1 & 1 & 1 & 1 & 1 & 1 & 1 & 1 & 1 & 1 & 1 & 1 & 1  \\ \hline
\bf 2 & 2 & 3 & 6 & 10 & 20 & 35 & 68 & 126 & 242 & 458 & 886 & 1696  \\ \hline
\bf 3 & 3 & 6 & 16 & 36 & 96 & 237 & 624 & 1608 & 4221 & 11043 & 29109 & 76768  \\ \hline
\bf 4 & 4 & 10 & 32 & 89 & 284 & 866 & 2776 & 8860 & 28744 & 93464 & 305608 & 1000982  \\ \hline
\bf 5 & 5 & 15 & 55 & 180 & 656 & 2330 & 8620 & 32020 & 120900 & 459660 & 1761230 & 6779350  \\ \hline
\bf 6 & 6 & 21 & 86 & 321 & 1302 & 5212 & 21582 & 90132 & 382602 & 1639917 & 7096674 & 30926564  \\ \hline
\bf 7 & 7 & 28 & 126 & 525 & 2331 & 10297 & 46796 & 215012 & 1003877 & 4740008 & 22622985 & 108914792  \\ \hline
\bf 8 & 8 & 36 & 176 & 806 & 3872 & 18600 & 91520 & 455849 & 2306152 & 11808484 & 61161312 & 319883860  \\ \hline
\bf 9 & 9 & 45 & 237 & 1179 & 6075 & 31395 & 165591 & 884592 & 4796848 & 26337348 & 146326572 & 821478540  \\ \hline
\bf 10 & 10 & 55 & 310 & 1660 & 9112 & 50245 & 281920 & 1602175 & 9236660 & 53921531 & 318568940 & 1902539090  \\ \hline

\end{tabular}

{ \footnotesize
\hspace{0.55cm}\begin{minipage}{0.1\textwidth}
\[
a_1^I(FS_r)=r
\]
\end{minipage} \hspace{5cm}
\begin{minipage}{0.2\textwidth}
\[
a_2^I(FS_r)=\frac{1}{2}r^2+\frac{1}{2}r
\]
\end{minipage} \hfill
\begin{minipage}{0.65\textwidth}
\[
a_3^I(FS_r)=\frac{1}{6}r^3+\frac{3}{2}r^2-\frac{2}{3}r
\]
\end{minipage}

\hspace{0.55cm}\begin{minipage}[b]{0.3\textwidth}
\[
a_4^I(FS_r)=\frac{1}{24}r^4+\frac{5}{4}r^3-\frac{1}{24}r^2-\frac{1}{4}r
\]
\end{minipage} \hfill
\begin{minipage}[b]{0.82\textwidth}
\[
a_5^I(FS_r)=\frac{1}{120}r^5+\frac{7}{12}r^4+\frac{67}{24}r^3-\frac{43}{12}r^2+\frac{6}{5}r
\]
\end{minipage}

\begin{align*}
a_6^I(FS_r)=&\frac{1}{720}r^6+\frac{3}{16}r^5+\frac{461}{144}r^4-\frac{73}{48}r^3-\frac{1513}{360}r^2+\frac{10}{3}r \\
a_7^I(FS_r)=&\frac{1}{5040}r^7+\frac{11}{240}r^6+\frac{263}{144}r^5+\frac{115}{16}r^4-\frac{8089}{360}r^3-\frac{319}{-15}r^2-\frac{48}{7}r \\
a_8^I(FS_r)=&\frac{1}{40320}r^8+\frac{13}{1440}r^7+\frac{1979}{2880}r^6+\frac{173}{18}r^5-\frac{66113}{5760}r^4-\frac{43913}{1440}r^3+\frac{227777}{3360}r^2+\frac{281}{-8}r \\
a_9^I(FS_r)=&\frac{1}{362880}r^9+\frac{1}{672}r^8+\frac{1657}{8640}r^7+\frac{547}{90}r^6+\frac{377749}{17280}r^5-\frac{37105}{288}r^4+\frac{18446699}{90720}r^3-\frac{294191}{2520}r^2+\frac{136}{9}r \\
a_{10}^I(FS_r)=&\frac{1}{3628800}r^{10}+\frac{17}{80640}r^9+\frac{5129}{120960}r^8+\frac{14423}{5760}r^7+\frac{5648053}{172800}r^6-\frac{789689}{11520}r^5-\frac{20055283}{90720}r^4+\frac{18449327}{20160}r^3-\frac{28177631}{25200}r^2+\frac{2292}{5}r \hfill & \\
a_{11}^I(FS_r)=&\frac{1}{39916800}r^{11}+\frac{19}{725760}r^{10}+\frac{937}{120960}r^9+\frac{91897}{120960}r^8+\frac{3771383}{172800}r^7+\frac{2583703}{34560}r^6-\frac{133247833}{181440}r^5  \\
&+\frac{289546877}{181440}r^4-\frac{5427659}{6300}r^3-\frac{1158377}{1260}r^2+\frac{9054}{11}r\\
a_{12}^I(FS_r)=&\frac{1}{479001600}r^{12}+\frac{1}{345600}r^{11}+\frac{10487}{8709120}r^{10}+\frac{17567}{96768}r^9+\frac{137379751}{14515200}r^8+\frac{1567309}{12800}r^7-\frac{664752743}{1741824}r^6 \\
&-\frac{22482445}{13824}r^5+\frac{108858294689}{10886400}r^4-\frac{11855920577}{604800}r^3+\frac{5629166951}{332640}r^2-\frac{21769}{4}r
\end{align*} }

\section{Values of $a_n^C(FS_r)$} \label{app:congruence-values}

\label{cong-tables} \begin{tabular}{| c || c | c | c | c | c | c | c | c | c |}
\hline \bf  r $\backslash$ n  & \bf 1 & \bf 2 & \bf 3 & \bf 4 & \bf 5 & \bf 6 & \bf 7 \\ \hline \hline
\bf 1 & 1 & 2 & 3 & 4 & 5 & 6 & 7 \\ \hline
\bf 2 & 1 & 10 & 40 & 149 & 457 & 1380 & 3940 \\ \hline
\bf 3 & 1 & 26 & 227 & 1696 & 10381 & 64954 & 367829 \\ \hline
\bf 4 & 1 & 58 & 940 & 12053 & 124683 & 1312774 & 32656398 \\ \hline
\bf 5 & 1 & 122 & 3383 & 68524 & 1089957 & 17321988 & 780465754 \\ \hline
\bf 6 & 1 & 250 & 11320 & 344609 & 7962407 & 179542398 & 12045020929 \\ \hline
\bf 7 & 1 & 506 & 36347 & 1609696 & 52053881 & 1600876052 & 147519031977 \\ \hline
\bf 8 & 1 & 1018 & 113860 & 7172573 & 316326523 & 12911778902 & 1565476753784 \\ \hline
\bf 9 & 1 & 2042 & 351263 & 30972244 & 1828173277 & 97095768316 & 15081546028136 \\ \hline
\bf 10 & 1 & 4090 & 1073200 & 130896569 & 10196063247 & 694127660206 & 135628506406503 \\ \hline
\end{tabular}

\bigskip

\begin{eqnarray*}
a_1^C(FS_r) & = &1\\
a_2^C(FS_r) & = & 4\cdot2^r-6 \\
a_3^C(FS_r) & = & \frac{113}{6}3^r-38\cdot2^r+\frac{45}{2} \\
a_4^C(FS_r) & = & \frac{291}{2}4^r-\frac{745}{2}3^r+ 317\cdot 2^r-\frac{189}{2} \\
a_5^C(FS_r) & = & \frac{15311}{10}5^r-\frac{58169}{12}4^r+5582\cdot 3^r-\frac{5493}{2}2^r+\frac{2917}{6} \\
a_6^C(FS_r) & = & \frac{8530559}{360}6^r-\frac{444264}{5}5^r+\frac{3069971}{24}4^r-\frac{782245}{9}3^r+\frac{216245}{8}2^r-\frac{43211}{15} \\
a_7^C(FS_r) & = & \frac{161313668}{105}7^r-\frac{235545521}{36}6^r+\frac{261192269}{24}5^r-\frac{210522757}{24}4^r+\frac{122535049}{36}3^r-\frac{7912154}{15}2^r+\frac{458861}{24}
\end{eqnarray*}

\end{landscape}

\section{Comparing $a_n^I(FS_2)$ with central binomial coeffecients} \label{app:cent-binom-coeff}

\center

\bigskip

\hspace{-3em} \label{bin-coeff} \begin{tabular}{| c || c | c | c | c |}
\hline & & & \\
\bf  $n$  & \bf $a_n^I(FS_2)$ & \bf ${n \choose \lfloor n/2 \rfloor}$ & Difference  \\[10pt] \hline \hline
\bf 1 & 2 & 2 & 0 \\ \hline
\bf 2 & 3 & 3 & 0 \\ \hline
\bf 3 & 6 & 6 & 0 \\ \hline
\bf 4 & 10 & 10 & 0 \\ \hline
\bf 5 & 20 & 20 & 0 \\ \hline
\bf 6 & 35 & 35 & 0 \\ \hline
\bf 7 & 68 & 70 & 2 \\ \hline
\bf 8 & 126 & 126 & 0 \\ \hline
\bf 9 & 242 & 252 & 10 \\ \hline
\bf 10 & 458 & 462 & 4 \\ \hline
\bf 11 & 886 & 924 & 38 \\ \hline
\bf 12 & 1696 & 1716 & 20 \\ \hline
\bf 13 & 3284 & 3432 & 148 \\ \hline
\bf 14 & 6339 & 6435 & 96 \\ \hline
\bf 15 & 12302 & 12870 & 568 \\ \hline
\bf 16 & 23850 & 24310 & 460 \\ \hline
\bf 17 & 46390 & 48620 & 2230 \\ \hline
\bf 18 & 90244 & 92378 & 2134 \\ \hline
\bf 19 & 175940 & 184756 & 8816 \\ \hline
\bf 20 & 343246 & 352716 & 9470 \\ \hline
\bf 21 & 670714 & 705432 & 34718 \\ \hline
\bf 22 & 1311764 & 1352078 & 40314 \\ \hline
\bf 23 & 2568740 & 2704156 & 135416 \\ \hline
\bf 24 & 5034652 & 5200300 & 165648 \\ \hline
\bf 25 & 9877768 & 10400600 & 522832 \\ \hline
\end{tabular}

\end{document}